\theoremstyle{plain}
    \newtheorem{thm}{Theorem}[section]
    \newtheorem{ppn}[thm]{Proposition}
    \newtheorem{lem}[thm]{Lemma}
    \newtheorem{cor}[thm]{Corollary}
\theoremstyle{definition}
    \newtheorem{rmk}[thm]{Remark}
    \newtheorem{ex}[thm]{Example}
\numberwithin{equation}{section}
\def\Ker{\operatorname{Ker}}\def\Coker{\operatorname{Coker}}
\def\Hom{\operatorname{Hom}}\def\Spec{\operatorname{Spec}}
\def\C{\mathbb{C}}\def\Q{\mathbb{Q}}\def\R{\mathbb{R}}\def\Z{\mathbb{Z}}\def\F{\mathbb{F}}
\def\ol#1{\overline{#1}}\def\wt#1{\widetilde{#1}}\def\wh#1{\widehat{#1}}
\def\os#1#2{\overset{#1}{#2}}
\def\ot{\otimes}\def\op{\oplus}\def\ra{\rightarrow}
\def\a{\alpha}\def\b{\beta}\def\g{\gamma}\def\d{\delta}\def\pd{\partial}\def\e{\varepsilon}
\def\k{\kappa}\def\m{\mu}\def\z{\zeta}\def\vphi{\varphi}
\def\o{\omega}\def\O{\Omega}
\def\sO{\mathscr{O}}\def\sD{\mathscr{D}}\def\sM{\mathscr{M}}
\def\angle#1{{\langle #1 \rangle}}
\def\G{\Gamma}
\def\L{\Lambda}\def\s{\sigma}
\def\Im{\operatorname{Im}}
\def\Image{\operatorname{Image}}
\def\Re{\operatorname{Re}}
\def\1{\mathbf{1}}
\def\r{\rho}
\def\i{\iota}
\def\ve{\varepsilon}
\def\dR{\mathrm{dR}}
\def\rank{\operatorname{rank}}
\def\Ext{\operatorname{Ext}}
\def\MHS{\mathrm{MHS}}
\def\F#1#2#3{F\left({#1\atop #2};#3\right)}
\def\P{\mathbb{P}}
\def\sH{\mathscr{H}}\def\sE{\mathscr{E}}
\def\nb{\nabla}
\def\y{\eta}
\def\Res{\operatorname{Res}}
\def\floor#1{\lfloor#1\rfloor}
\def\ceil#1{\lceil#1\rceil}
\def\Gr{\operatorname{Gr}}
\def\D{\Delta}
\def\Per{\mathrm{Per}}
\def\cA{{\mathscr A}}
\def\cH{{\mathscr H}}
\def\cD{{\mathscr D}}
\def\cO{{\mathscr O}}
\def\hra{\hookrightarrow}
\def\lra{\longrightarrow}
\begin{document}

\title{CM periods, CM Regulators and Hypergeometric Functions, I}
\author{Masanori Asakura and Noriyuki Otsubo}
\address{Department of Mathematics, Hokkaido University, Sapporo, 060-0810 Japan}
\email{asakura@math.sci.hokudai.ac.jp}
\address{Department of Mathematics and Informatics, Chiba University, Chiba, 263-8522 Japan}
\email{otsubo@math.s.chiba-u.ac.jp}

\begin{abstract}
In this article, we prove the Gross--Deligne conjecture on CM periods for motives associated with $H^2$ of certain surfaces fibered over the projective line. We prove for the same motives some formula which expresses the $K_1$-regulators in terms of hypergeometric functions ${}_3F_2$, and the non-triviality of the regulators. 
\end{abstract}

\date{June 8, 2016}
\subjclass[2000]{14D07, 19F27, 33C20 (primary), 11G15, 14K22,  (secondary)}
\keywords{Periods, Regulators, Complex multiplication, Hypergeometric functions}

\maketitle


\section{Introduction}

Periods and regulators of a motive over a number field are very important invariants, whose arithmetic significance can be seen from their conjectural relations with values of the $L$-function at integers. Such conjectures include those of Birch--Swinnerton-Dyer, Deligne, Bloch, Beilinson and Bloch--Kato. 
If the motive has complex multiplication (CM) by a number field, especially by an abelian field, those invariants take a special form. 

If $A$ is an abelian variety with CM by a subfield of the $N$th cyclotomic field, 
its periods are written in terms of values of the gamma function at $\frac{1}{N}\Z$. 
When $A$ is an elliptic curve, the formula is due to Lerch \cite{lerch} and was rediscovered by Chowla--Selberg \cite{chowla-selberg}. 
Gross \cite{gross} gave a geometric proof of a generalization of the formula and proposed a conjecture for any motivic de Rham--Hodge structure with CM by an abelian field, whose precise form was given by Deligne. 
Using Shimura's monomial relation \cite{shimura}, Anderson \cite{anderson} proved the formula for CM abelian varieties by reducing to the case of Fermat curves. 

In this paper, we study a surface $X$ fibered over $\P^1$ ($t$-line) with the general fiber defined by 
$$y^p=x^a(1-x)^b(t^l-x)^{p-b}$$ 
where $l$ and $p$ are distinct prime numbers. 
It admits an action of $\mu_{lp}$ and its second cohomology modulo the image of classes supported at singular fibers gives a de Rham--Hodge structure $H=(H_\dR, H_B)$ with multiplication by $K:=\Q(\mu_{pl})$ (see Sect. 2.2). 
We shall prove that $H_B$ is one-dimensional over $K$ (Theorem \ref{thm 0}). 
For each embedding $\chi \colon K \hookrightarrow \C$, let $H^\chi$ be the eigen-component.  
We shall determine its period and the Hodge type independently, 
and prove the Gross--Deligne conjecture. 

\begin{thm}[Period formula, see Theorem \ref{thm 1}]
For each $\chi \colon K \hookrightarrow \C$, let 
$\chi(\z_p)=\z_p^n$, $\chi(\z_l)=\z_l^m$ and put 
$\a=\{\frac{na}{p}\}$, $\b=\{\frac{nb}{p}\}$, $\m=\{\frac{m}{l}\}$. 
Then we have
$$\Per(H^\chi) \sim_{K'^\times} B(\b,\m)B(1-\b,\b-\a+\m)$$
where $K':=\Q(\mu_{2lp})$, 
and the Gross--Deligne conjecture holds. 
\end{thm}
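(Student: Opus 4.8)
The plan is to realize $\Per(H^\chi)$ as the period integral $\int_{\g^\chi}\o^\chi$ of an explicit $K$-rational de Rham generator $\o^\chi$ of the one-dimensional space $H^\chi_\dR$ against an explicit topological generator $\g^\chi$ of $H^\chi_B$ (which is one-dimensional by Theorem \ref{thm 0}), to evaluate this integral over the fibration $X\to\P^1$ as a product of two Beta values, and, having independently computed the Hodge type of $H^\chi$, to check that the product is the one predicted by Gross--Deligne. For the de Rham generator I would write down a meromorphic $2$-form on a smooth projective model of $X$ of the shape
$$\o^\chi = x^{i}(1-x)^{j}(t^l-x)^{k}\,t^{r}\,\frac{dx\wedge dt}{y^{n}},$$
the exponents $i,j,k,r$ being determined, modulo $y^p=x^a(1-x)^b(t^l-x)^{p-b}$, by the requirements that $\o^\chi$ lie in the $\chi$-eigenspace of the $\mu_{lp}$-action and have the smallest pole order along the boundary divisor; by Theorem \ref{thm 0} any nonzero such form spans $H^\chi_\dR$, and reading off its pole orders along the boundary components --- equivalently the first step of the Hodge filtration containing its class --- gives the Hodge type of $H^\chi$ (a single pair $(p,q)$ with $p+q=2$) as an explicit function of $\a,\b,\m$.

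On the Betti side I would construct $\g^\chi$ as the $\chi$-eigenprojection of a concrete $2$-cycle fibered over a real arc in the $t$-line: let $x$ run over a Pochhammer-type contour (or the regularized segment $[0,1]$) in the fiber $C_t$ while $t$ runs over a regularization of $[0,1]$, take the evident lift to $X$, and project onto the $\mu_{lp}$-eigencomponent. One has to check this projection is nonzero --- by an intersection-pairing computation, or a posteriori from the non-vanishing of the period --- so that it generates $H^\chi_B$.

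I would then evaluate $\int_{\g^\chi}\o^\chi$ by Fubini. The inner integral along the fiber $C_t$ is an Euler integral, equal after the substitution $x=t^l u$ to a power of $t^l$ times a Gauss function ${}_2F_1$ of $t^l$; the outer integral in $t$ over $[0,1]$ is a Mellin-type transform of this ${}_2F_1$, which produces a balanced ${}_3F_2$ at argument $1$. For the CM component $H^\chi$ the parameters are in Saalschützian position --- unlike the generic $H^2$, where an irreducible ${}_3F_2$, the $K_1$-regulator, survives --- so this ${}_3F_2$ reduces to a ratio of gamma values, and collecting terms gives $\Per(H^\chi)\sim B(\b,\m)\,B(1-\b,\b-\a+\m)$; the roots of unity from the $\mu_{lp}$-action and the constants from regularizing the contour all lie in $K'^\times=\Q(\mu_{2lp})^\times$, which is the indeterminacy in the statement.

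Finally, writing $B(p,q)=\Gamma(p)\Gamma(q)/\Gamma(p+q)$ turns the period formula into a monomial in values $\Gamma(\{c/2lp\})$, while the Hodge type from the first step gives, through the definition of the Gross--Deligne period, a second such monomial; that the two agree modulo $K'^\times$ is a finite check using $\Gamma(z)\Gamma(1-z)=\pi/\sin\pi z$ and the Gauss multiplication formula to absorb the extra factors of $\pi$ and powers of $2,l,p$. I expect the crux to be the coherence of the two computations rather than any step in isolation: pinning down the Hodge filtration jump of $\o^\chi$ exactly --- including the degenerate sub-cases where $\a,\b,\m$ satisfy a $\Z$-linear relation and the class lands in an unexpected step of the filtration --- so that the Hodge-theoretic side is genuinely independent of the integral, and then matching it to the Beta product with all the roots-of-unity and $\pi$-bookkeeping kept inside $K'^\times$; that reconciliation is exactly the Gross--Deligne assertion.
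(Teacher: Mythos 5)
Your outline---an explicit rational $2$-form as de Rham generator, a swept $2$-cycle as Betti generator, Fubini reducing the period to a Mellin transform of a Gauss ${}_2F_1$ and hence to a ${}_3F_2(1)$ that collapses to Beta values, plus an independent Hodge-type computation---is the strategy the paper follows. But two of your steps have genuine gaps. The first concerns the Betti cycle and the reason the ${}_3F_2$ degenerates: it is not Saalsch\"utz's condition, and it does not hold for an arbitrary generator of the rank-one space $H_B^\chi$; it depends entirely on \emph{which} vanishing cycle you sweep. The fiber carries two natural thimbles, $\d_0$ (from $0$ to the moving branch point $t^l$) and $\d_1$ (from $t^l$ to $1$). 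The $\d_1$-integral is, up to constants, ${}_2F_1(\a,1-\b;1;1-t^l)$ with lower parameter $1$, so its Mellin transform in $t$ is a ${}_3F_2(1)$ with a coincident upper and lower parameter, which cancels and is then summed by Gauss's theorem, giving the Beta product. The $\d_0$-integral has lower parameter $1-\a+\b$, and its Mellin transform is the irreducible ${}_3F_2$ of Theorem \ref{thm intro 2}---the regulator, not the period. Moreover, only the $\d_1$-sweep, corrected by $(1-\tau)_*(1-\s)_*$, closes up to a genuine class in $H_2(X,\Q)$ (its boundary lies in $X_0$, where $\tau$ acts trivially on $H_1$), whereas the $\d_0$-sweep has boundary on the fibers over $\mu_l$ and is only a relative class. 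Your ``Pochhammer contour over a regularized $[0,1]$'' leaves this choice open, and the wrong or a mixed choice does not produce a Beta product.

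The second gap is that your method only produces generators lying in $F^1H_\dR^\chi$: a form $x^i(1-x)^jt^r\,dx\wedge dt/y^n$ can represent at best an $F^1$-class of $H^2(X)/\angle{Z}$, and even that identification requires controlling residues at the singular fibers (this is what the canonical-extension and \v{C}ech analysis of Sections 3--5 does; ``smallest pole order on a smooth model'' is not a substitute, because one must work modulo classes supported on $Z$ and the $\Gr_F^0$ part is an $H^1$ of a negative line bundle on $\P^1$, not a space of global forms). For the embeddings $\chi$ of Hodge type $(0,2)$---and also when the exponent of $t$ forced on you violates the convergence condition $\m>\a-\b$ of the double integral---no form of your shape generates $H_\dR^\chi$, and the direct computation is unavailable. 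The paper closes these cases by the cup-product auto-duality $\Per(H^{(h)})\cdot\Per(H^{(-h)})\sim_{K^\times}(2\pi i)^2$ (Lemma \ref{autodual}) combined with $\G(x)\G(1-x)\sim_{K'^\times}2\pi i$; some such reduction is indispensable and is missing from your plan.
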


On the other hand, regulators of the Fermat curve of degree $N$ are written in terms of values at $1$ of hypergeometric functions ${}_3F_2$ with parameters in $\frac{1}{N}\Z$ \cite{otsubo-1}. The conjectural relation with $L$-values is verified for some cases in \cite{otsubo-2}, \cite{otsubo-3}. 
Recall that the beta function is related with the value at $1$ of Gauss' hypergeometric function ${}_2F_1$. It is also suggestive that the classical polylogarithm can be written as
$$\mathrm{Li}_k(x)=x \cdot {}_{k+1}F_k\left({1,1, \dots, 1 \atop 2,\dots, 2}; x\right),$$
and hence special values of Dirichlet $L$-functions are written in terms of ${}_{k+1}F_k$-values. 

For the surface $X$, we consider the Beilinson regulator \cite{beilinson} from the motivic cohomology to the Deligne cohomology 
$$r_\sD \colon H^3_\sM(X,\Q(2)) \to H^3_\sD(X_\C,\Q(2)).$$
In terms of algebraic $K$-theory, $H^3_\sM(X,\Q(2))=(K_1(X) \ot_\Z \Q)^{(2)}$ (the second eigenspace for the Adams operations). 
Let $Z_1$ be the union of fibers over $\mu_l$ and consider the image of $H^3_{\sM,Z_1}(X,\Q(2))\to H^3_\sM(X,\Q(2))$. 
The Deligne cohomology can be regarded as functionals on $F^1H_\dR^2(X)$ up to periods, and we restrict them to $F^1H_\dR$. 

\begin{thm}[Regulator formula, see Theorem \ref{thm 2}]\label{thm intro 2}
Let $\chi$ be an embedding such that $H_\dR^\chi \subset F^1H_\dR$. 
Then, for any $z \in H^3_{\sM,Z_1}(X,\Q(2))$ and $\o \in H_\dR^\chi$, we have
$$r_\sD(z)(\o) \sim_{K^\times} B(1-\a,\b)\cdot  {}_3F_2\left({1-\a,\b,\b-\a+\m\atop1-\a+\b,\b-\a+\m+1};1\right)$$
where $\a$, $\b$, $\m$ are as before. 
\end{thm}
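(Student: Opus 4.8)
The plan is to reduce the Beilinson regulator of the class to an iterated integral over the base of the fibration and to evaluate that integral by the classical Euler integral representation of ${}_3F_2$ at $1$.

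\textbf{Step 1 (the fibration and the class).} I would first pass to the auxiliary family $\tilde\pi\colon \tilde X\to\P^1$ over the $\lambda$-line with general fibre the hypergeometric curve $\tilde C_\lambda\colon y^p=x^a(1-x)^b(\lambda-x)^{p-b}$, so that $X$ may be taken to be $\tilde X\times_{\P^1}\P^1$ along $\lambda=t^l$, the action of $\z_l\in\mu_{lp}$ being the Galois action of this cyclic $l$-cover, and $Z_1=\tilde\pi^{-1}(\lambda=1)$ pulled back to $X$. In the eigenspace $\chi(\z_l)=\z_l^m$, the new part of $H^2_\dR(X)$ is then $H^1\bigl(\P^1\setminus S,\ \mathcal L_m\ot R^1\tilde\pi_*\Q\bigr)$, where $\mathcal L_m$ is the rank-one local system with monodromy $\z_l^m$ at $\lambda=0$; this is where $\m=\{m/l\}$ enters, via the multivalued section $\lambda^{\m}$ of $\mathcal L_m$. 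As $\lambda\to1$ the branch points $x=1$ and $x=\lambda$ of $\tilde C_\lambda$ collide with exponents $b$ and $p-b$ summing to $p$, so in a semistable model $\tilde X_1$ is reducible with a loop in its dual graph; $\delta_\lambda$ below will be the corresponding vanishing cycle. I would then write down explicitly the component $E$ produced by the vanishing cycle, the two points $Q_1,Q_2$ where it meets the rest of $\tilde X_1$, and rational functions $f_E, f_{E'}$ built from $x$, $1-x$, $y$ with $\div(f_E)+\div(f_{E'})=0$ on $\tilde X$, giving a class $z_0\in H^3_{\sM,Z_1}(X,\Q(2))=\CH^2_{Z_1}(X,1)\ot\Q$ (after $\z_l$-averaging into the correct eigenspace). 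Using $\mu_{lp}$-equivariance and the one-dimensionality of $H^\chi$ (Theorem \ref{thm 0}), the formula for an arbitrary $z$ reduces to the one for $z_0$.

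\textbf{Step 2 (regulator as a fibre integral).} By functoriality of $r_\sD$ under the Gysin map, $r_\sD(z_0)$ is the pushforward of $r_\sD(f_E)+r_\sD(f_{E'})$ from $H^1_\sD$ of the fibre components; pairing with $\o\in H^\chi_\dR\subset F^1H^2_\dR(X)$ — the hypothesis $H^\chi_\dR\subset F^1$ being exactly what makes this pairing available — and using that $\o$ lies in the summand of $H^2_\dR(X)$ pulled back from $H^1$ of the base, so that its fibrewise restriction is exact, the pairing collapses, after integration by parts, to $\int_\Gamma\o$ for a real $2$-chain $\Gamma\subset X(\C)$ whose boundary is prescribed by $\div(f_E),\div(f_{E'})$ on $Z_1$. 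Realizing $\Gamma$ as the trace over $\lambda\in(0,1)$ of the family of vanishing cycles $\delta_\lambda\subset \tilde C_\lambda(\C)$, Fubini gives
$$r_\sD(z_0)(\o)\ \sim_{K^\times}\ \int_0^1\Bigl(\int_{\delta_\lambda}\o_\lambda\Bigr)\,d\lambda,$$
where $\o_\lambda$ is the fibrewise $1$-form obtained from $\o$ together with the section $\lambda^{\m}$ of $\mathcal L_m$.

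\textbf{Step 3 ($\,{}_2F_1$ on fibres, ${}_3F_2$ on the base).} The inner integral is a period of $\tilde C_\lambda$, whose branch-point exponents in the $\chi(\z_p)=\z_p^n$ eigenspace are $\a,\b,1-\b,1-\a$; by Euler's integral $\int_0^1 x^{A-1}(1-x)^{B-1}(1-\lambda x)^{-C}dx=B(A,B)\,{}_2F_1\!\left({C,A\atop A+B};\lambda\right)$ with $C=1-\a$, $A=\b$, $B=1-\a$, it equals, up to $K^\times$ and a $\lambda$-algebraic factor, $B(1-\a,\b)\,{}_2F_1\!\left({1-\a,\b\atop1-\a+\b};\lambda\right)$ (the denominator $1-\a+\b=(1-\a)+\b$ reflecting the logarithmic degeneration at $\lambda=1$). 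Combining the $\lambda$-algebraic factor with the section $\lambda^{\m}$ of $\mathcal L_m$, the $\lambda$-dependent factors assemble into $\lambda^{c-1}(1-\lambda)^{e-c-1}$ with $c=\b-\a+\m$, $e=\b-\a+\m+1$ (so $e-c=1$), leaving $\int_0^1\lambda^{c-1}(1-\lambda)^{e-c-1}\,{}_2F_1\!\left({1-\a,\b\atop1-\a+\b};\lambda\right)d\lambda$; the classical formula
$$\int_0^1\lambda^{c-1}(1-\lambda)^{e-c-1}\,{}_2F_1\!\left({u,v\atop w};\lambda\right)d\lambda=B(c,e-c)\,{}_3F_2\!\left({u,v,c\atop w,e};1\right)$$
together with $B(c,e-c)=1/c\in\Q^\times$ then yields exactly $B(1-\a,\b)\cdot{}_3F_2\!\left({1-\a,\b,\b-\a+\m\atop1-\a+\b,\b-\a+\m+1};1\right)$ up to $K^\times$. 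The nontriviality of the regulator is then the nonvanishing of this ${}_3F_2$-value, which one reads off from the positivity of the integrand $\lambda^{c-1}\,{}_2F_1(\cdots;\lambda)$ on $(0,1)$ (equivalently of the hypergeometric series) in the relevant parameter range.

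\textbf{Main obstacle.} The crux is Step 2: turning the abstract Beilinson regulator into the explicit iterated integral. This demands a careful semistable model of $\tilde X$ near $\lambda=1$, the precise identification of $E$ and of $f_E,f_{E'}$, and control of all boundary and logarithmic terms; in particular one must verify that the powers of $2\pi i$ contributed by the Gysin map and by $r_\sD$ cancel, so that the final comparison holds over $K^\times$ and not merely over $K'^\times$ as in the period formula. A secondary difficulty is the eigenvalue bookkeeping: matching the $\z_p$- and $\z_l$-eigenvalues of $\o$ to the parameters so that the fibre period has numerator/denominator exactly $\{1-\a,\b;1-\a+\b\}$ and the base exponent is exactly $\lambda^{\b-\a+\m-1}$.
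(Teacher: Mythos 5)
Your overall skeleton and your endgame match the paper's: the paper likewise reduces, via the supported regulator $r_{\sD,Z_1}$ and the fact that $H_1(Z_1,\Q)\ot_R K$ is cyclic over $K$, to computing a single extension class $\r(\g)$; it realizes the relevant relative $2$-chain as a Lefschetz thimble swept over $t\in[0,1]$; and it obtains the ${}_3F_2$ exactly as in your Step 3, by composing the Euler integral for the fibre period $B(1-\a,\b)\,t^{\b-\a}\,{}_2F_1(1-\a,\b;1-\a+\b;t)$ with the classical formula $\int_0^1\lambda^{c-1}(1-\lambda)^{e-c-1}{}_2F_1(u,v;w;\lambda)\,d\lambda=B(c,e-c)\,{}_3F_2(u,v,c;w,e;1)$ (Proposition \ref{2-period}(ii)). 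One correction to Step 1: the chain must be the thimble of the cycle $\k_0$ that vanishes at $\lambda=0$ (the cycle supported over the segment joining the branch points $x=0$ and $x=\lambda$), since only then does its boundary land on the fibre over $\lambda=1$, i.e.\ on $Z_1$; the cycle attached to the collision of $x=1$ with $x=\lambda$ vanishes at the wrong end, and its thimble $\D_1$ computes the \emph{periods} (beta values, Proposition \ref{2-period}(i)), not the regulator. Your Step 3 integrand implicitly uses the correct cycle, but your geometric description names the other one. The paper also does not construct explicit symbols $f_E,f_{E'}$; it bypasses this by the commutative diagram of Lemma 6.1 together with the surjectivity of $r_{\sD,Z_1}$ onto $H_1(Z_1,\Q)\cap H^{0,0}$.

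The genuine gap is the one you flag yourself in Step 2. ``The pairing collapses, after integration by parts, to $\int_\Gamma\o$'' is not an available move: $\o$ is a class in $F^1H^2_\dR(X)$ and $\Gamma$ is a relative chain, so the integral is undefined until $\o$ is lifted to $F^1H^2_\dR(\,\cdot\,,Z_1)$, and the entire difficulty is to produce a lift that is simultaneously in $F^1$, explicitly integrable, and compatible with the Carlson description of the extension class. The paper's answer is Theorem \ref{thm apdx}: under the condition that $\ol\nabla\colon F^1\sH_e\to\O^1(\log D)\ot\Gr_F^0\sH_e$ is generically bijective, every class in $F^1H^2_\dR(X)_0$ is represented by a rational $2$-form $\o^\circ\in\G(X^\circ,\O^2_X)$ coming from Deligne's canonical extension --- here $\o^\circ=\o_{m,n}=t^m\frac{dt}{t}\wedge\o_n$, which is also what justifies your ansatz that the fibrewise form is ``$\o_n$ twisted by $\lambda^{\m}$'' (Lemma \ref{iota} and Proposition \ref{F^1 basis}) --- and a \v{C}ech computation shows that $(0,0,\o^\circ)$ is an $F^1$-lift in relative cohomology whose pairing with $\Gamma$ computes $\r$. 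This also forces a detail your sketch misses: $\o^\circ$ only exists on $X^\circ=X\setminus(X_0\cup X_\infty)$, so in the proof of Theorem \ref{thm 2} the thimble is replaced by a homologous chain $\G'$ avoiding $t=0$ before passing to the limit. Without this machinery your Step 2 restates the problem rather than solving it. Finally, your closing claim that positivity of the ${}_3F_2$-value gives non-triviality of the regulator is false as stated: the functional is only well defined modulo the image of $H_2(X,\Q)$, so the paper's non-vanishing result (Theorem \ref{thm 3}) requires a separate comparison with the periods $\O_{m,n}$ and an imaginary-part argument.
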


Moreover, we shall show the non-vanishing of the regulator image under a mild assumption (Theorem \ref{thm 3}). 

Regarding these examples, it is tempting to ask if the regulators and hence the $L$-values of a motive with CM by an abelian field can be written in terms of values of ${}_{k+1}F_k$, with $k$ depending on the weight. In a forthcoming paper \cite{a-o}, we shall study more general fibrations of varieties over $\P^1$ with multiplication by a number field whose relative $H^1$ has a special type of monodromy.  

Concerning the period conjecture, there is a result of Maillot--Roessler \cite{maillot-roessler} using Arakelov theory on the absolute value of the period. 
Recently, Fres\'an \cite{fresan} proved the formula for the alternating product of the determinants for any smooth projective variety with a finite order automorphism by reducing to a result of Saito--Terasoma \cite{saito-terasoma}. 
Since we prove $\dim_KH_B=1$ and $H^1(X)=H^3(X)=0$, 
the Gross--Deligne conjecture for our $H$ is a special case of Fres\'an's result. 
We need, however, our precise computations for the study of regulators. 

Our method is quite different from the previous works mentioned above. 
A crucial step is to compute explicitly Deligne's canonical extension $\sH_e$ of the Gauss--Manin connection on the relative first de Rham cohomology. 
Our fibration is smooth outside $D:=\{0,\infty\}\cup \mu_l$ and there is a connection
$$\nabla \colon \sH_e \to \O^1_{\P^1}(\log D) \ot \sH_e.$$ 
We shall describe it explicitly and determine the Hodge structure of $H$. 
The $1$-periods of the fiber are Gauss hypergeometric functions ${}_2F_1$. 
By the integral representation of Euler type, the $2$-periods of $X$ are firstly written in terms of ${}_3F_2$-values, which then turn out to be ${}_2F_1$-values. The conjecture follows by comparing these computations. 

It is more delicate in general to compute the regulators of given motivic elements, even for a fibration of curves. 
Here we use a technique of the first author \cite{asakura}, 
which we summarize in an appendix for the convenience of the reader.  
Via the canonical extension, we shall represent elements of $F^1H_\dR$ by certain rational $2$-forms.
Then the  regulators are expressed as integrals of those rational forms over Lefschetz thimbles, which are again written in terms of ${}_3F_2$-values. 

This paper proceeds as follows. 
In Sect. 2, we fix the setting and compute the $1$-periods of the fiber and $2$-periods of $X$. In Sect. 3, we determine the Gauss--Manin connection and the canonical extension. 
In Sect. 4, we determine the Hodge structure and show that $H_B$ is one-dimensional over $K$. In Sect. 5, we give a basis of $F^1H_\dR$ and verify the Gross--Deligne conjecture. 
In Sect. 6, we prove the regulator formula and discuss the non-vanishing.  
Appendix (Sect. 7) provides a short exposition of a technique developed in 
\cite{asakura}.
\subsection*{Acknowledgements}
This work started when the authors stayed at University of Toronto. We would like to thank heartily Kumar Murty for his hospitality. 
The second author would like to thank Bruno Kahn for helpful discussions. 
Finally, we would like to thank Spencer Bloch for valuable comments on an earlier version. 
This work is supported by JSPS Grant-in-Aid for Scientific Research 24540001, 25400007 and by Inamori Foundation.   

\subsection*{Notations}
Throughout this paper, $\ol\Q$ denotes the algebraic closure of $\Q$ in $\C$. For each positive integer $N$, $\mu_N$ denotes the group of $N$th roots of unity and we put $\z_N=e^{2\pi i/N}$. 
For a real number $x$, we write
$x=\floor x + \{x\}$ with $\floor x \in \Z$, $0\le\{x\} <1$, and  put $\ceil{x}=-\floor{-x}$.  
For $\a \in \C$ and an integer $n\geq 0$, $(\a)_n=\prod_{i=0}^{n-1}(\a+i)$ is the Pochhammer symbol and the generalized hypergeometric function is defined by
$${}_pF_q\left({\a_1,\dots,\a_p \atop \b_1,\dots,\b_q};x\right)=\sum_{n=0}^\infty \frac{\prod_{i=1}^p(\a_i)_n}{\prod_{j=1}^q(\b_j)_n} \frac{x^n}{n!}.$$
We often drop the subscripts from ${}_pF_q$. It converges at $x=1$ when $\mathrm{Re}(\sum_j \b_j-\sum_i \a_i)>0$. 
We use the standard notation for the product of $\G$-values
$$\Gamma\left({\a_1,\dots,\a_p \atop \b_1,\dots, \b_q}\right)=\frac{\prod_{i=1}^p \Gamma(\a_i)}{\prod_{j=1}^q \Gamma(\b_j)}.$$
For a variety $X$ over $\ol\Q$, $H_\dR^n(X)=H_\dR^n(X/\ol\Q)$ denotes the algebraic de Rham cohomology and $H^n(X,\Q)$ denotes the Betti cohomology of the analytic manifold $X(\C)$, or the associated mixed Hodge structure. 

\section{Preliminaries}

\subsection{The setting}
Let $p$, $l$ be distinct prime numbers and $a$, $b$, $c$ be integers with $0<a,b,c<p$ (we shall soon assume that $b+c=p$). 
We define a fibration of curves $f \colon X \ra \P^1$ as follows. 
Let $g\colon Y \to \P^1$ be a proper flat morphism over $\ol\Q$ whose fiber $Y_t$ at $t \in \P^1$ is the normalization of the curve defined by 
$$y^p=x^a(1-x)^b(t-x)^c.$$
Then, $g$ is smooth outside $\{0,1,\infty\}$ and by the Riemann--Hurwitz formula, the genus of the generic fiber is $p-1$. The fiber $Y_1$ is a union of $\P^1$ intersecting transversally with each other. 
We have an automorphism $\s$ of order $p$ of $Y$ over $\P^1$ defined by
$$\s(x,y)=(x,\z_p^{-1}y).$$

Let  $g^{(l)}\colon Y^{(l)} \ra \P^1$ be the base change of $g$ by the morphism $\P^1 \to \P^1; t \mapsto t^l$. 
The action of $\s$ extends naturally to $Y^{(l)}$. On the other hand, the automorphism 
$$\tau(t)=\z_l t$$ 
of $\P^1$ induces an automorphism $\tau$ of $Y^{(l)}$ over $Y$. 
There is a desingularization $X$ of $Y^{(l)}$ such that $\s$ and $\tau$ extend to automorphisms of $X$ respectively over $\P^1$ and $Y$ (for example, if one takes a sequence of blow-ups
only at the singular points, then $\s$ and $\tau$ extend automatically). 
As a result, we obtain a fibration $f\colon X \ra \P^1$ of curves in the commutative diagram
$$\xymatrix{
X\ar[r] \ar[dr]_f & Y^{(l)} \ar[r] \ar[d]^{g^{(l)}} \ar@{}[dr]|\square & Y \ar[d]^g \\ & \P^1\ar[r] & \P^1 
}$$
and for $t \not\in\{0,\infty\}\cup \mu_l$, the fiber $X_t$ is isomorphic to $Y_{t^l}$.

\subsection{CM de Rham--Hodge structures}

A {\it de Rham--Hodge structure} is a quadruple $H=(H_\dR, H_B, \iota, F^\bullet)$ of
\begin{itemize}
\item 
a finite-dimensional $\ol\Q$-vector space $H_\dR$, 
\item a finite-dimensional $\Q$-vector space $H_B$, 
\item 
an isomorphism $\iota \colon H_\dR\ot_{\ol\Q}\C \to H_B\ot_\Q\C$, 
\item a descending filtration $F^\bullet H_\dR$ which induces a Hodge structure on $H_B$ via $\iota$. 
\end{itemize}
For a proper smooth variety $X$ over $\ol\Q$, its $n$th de Rham and Betti cohomology groups, the comparison isomorphism and the Hodge filtration define a de Rham--Hodge structure $H^n(X)$.  

Let $K$ be a finite extension of $\Q$.  
We say that $H$ admits a {\it $K$-multiplication} if  we are given $K$-actions on $H_\dR$ and $H_B$ which are  compatible with $\iota$ and $F^\bullet$. Moreover, we say that {\it $H$ has CM by $K$} if $\dim_KH_B=1$. 
For each embedding $\chi \colon K \hookrightarrow \C$, 
let $H_\dR^\chi$, $H_B^\chi:=(H_B\ot_\Q \ol\Q)^\chi$ denote the subspace on which $K$ acts as the multiplication via $\chi$. If $\dim_K H_B=1$, then these subspaces are $1$-dimensional over $\ol\Q$. 
Choosing any bases $\o_\dR \in H_\dR^\chi$ and $\o_B \in H_B^\chi$, 
we define the {\it period} $\Per(H^\chi) \in \C^\times$ by
$$\iota(\o_\dR)= \Per(H^\chi)\o_B.$$ 
By the ambiguity of the choices, 
$\Per(H^\chi)$ is only well-defined up to $\ol\Q^\times$. 
If $(H_\dR, F^\bullet)$ is already defined over $K$, the period is well-defined up to $K^\times$. 

Let $X$ be as in Sect. 2.1 and let 
$$Z = X \times_{\P^1} (\{0,\infty\}\cup \mu_l)$$ be 
the union of the bad fibers.  
Note that $Z$ is stable under the actions of $\s$ and $\tau$. 
Put
$$R=\Q[\s,\tau], \quad K=\Q(\mu_{lp})$$ 
and regard $K$ as an $R$-algebra by $\s \mapsto \z_p$, $\tau \mapsto \z_l$. 
The de Rham--Hodge structure we consider in this paper is 
$$H:= \Coker(H_Z^2(X) \to H^2(X)) \ot_R K.$$ 
It admits a $K$-multiplication and we shall show that $\rank_K H_B=1$ (Theorem \ref{thm 0}). 
An embedding $\chi\colon K \hookrightarrow \C$ is identified with $h \in (\Z/lp\Z)^\times$ such that $\chi(\z_{lp})=\z_{lp}^h$. 
If 
$$\Coker(H_Z^2(X) \to H^2(X)) = \bigoplus_{m \in \Z/l\Z,n\in\Z/p\Z} H^{(m,n)}$$
denotes the decomposition into the eigenspaces on which $\tau$ (resp. $\s$) acts by $\z_l^m$ (resp. $\z_p^n$), we have 
$$H=\bigoplus_{m\ne 0, n\ne 0} H^{(m,n)}.$$

\subsection{Periods of the fiber}

For $n=1,\dots, p-1$ and integers $i$, $j$, $k$, put a rational $1$-form on $Y_t$ by
$$\o_n^{ijk}=\frac{x^i(1-x)^j(t-x)^k}{y^n} dx.$$
Then, we have 
\begin{equation}\label{omega eigen}
\s^*\o_n^{ijk}=\z_p^n\o_n^{ijk}.
\end{equation}
Let $0<t<1$, and $\d_0$ be a path on $Y_t$ from $(0,0)$ to $(t,0)$ defined by 
$$x=ts, \ y=\sqrt[p]{x^a(1-x)^b(t-x)^c}. $$
Let $\d_1$ be a path on $Y_t$ from $(t,0)$ to $(1,0)$ defined by 
$$x=t+(1-t)s, \ y=\e^c \sqrt[p]{x^a(1-x)^b(x-t)^c}$$
where we put
$$\ve=\begin{cases}
i & \text{if $p=2$}, \\
-1 &  \text{if $p$ is odd}. 
\end{cases}$$

If we put
$$\k_m=(1-\s)_*\d_m \quad (m=0,1),$$
these define $1$-cycles on $Y_t$, and we have
\begin{equation}\label{kappa period}
\int_{\k_m} \o_n^{ijk}=\int_{\d_m} (1-\s)^*\o_n^{ijk}=(1-\z_p^n)\int_{\d_m}\o_n^{ijk}.
\end{equation}

\begin{lem}\label{period}
Fix integers $i, j, k \ge 0$. 
For $n=1,\dots, p-1$, put
$$\a=\frac{na}{p}-i, \quad \b=\frac{nb}{p}-j, \quad \g=\frac{nc}{p}-k.$$
Then we have
\begin{align*}
\int_{\d_0} \o_n^{ijk}&=B(1-\a,1-\g) \cdot t^{1-\a-\g}\F{1-\a,\b}{2-\a-\g}{t},\\
\int_{\d_1} \o_n^{ijk}&= \e^{p\g} B(1-\b,1-\g)\cdot (1-t)^{1-\b-\g}\F{\a,1-\b}{2-\b-\g}{1-t}. 
\end{align*}
\end{lem}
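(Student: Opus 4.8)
The plan is to compute each integral directly by reducing it to the Euler integral representation of ${}_2F_1$. Recall that for suitable parameters
$$
\int_0^1 s^{A-1}(1-s)^{B-1}(1-zs)^{-C}\,ds = B(A,B)\,\F{C,A}{A+B}{z}.
$$
So the whole proof is essentially a substitution plus bookkeeping of exponents and of the chosen branch of $\sqrt[p]{\,\cdot\,}$.

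\emph{Step 1: the integral over $\d_0$.} On $\d_0$ we have $x=ts$ with $s$ running over $[0,1]$, so $dx = t\,ds$, and along this path $y=\sqrt[p]{x^a(1-x)^b(t-x)^c}$, hence
$$
y^n = \bigl(x^a(1-x)^b(t-x)^c\bigr)^{n/p}.
$$
Therefore
$$
\o_n^{ijk} = \frac{x^i(1-x)^j(t-x)^k}{y^n}\,dx
= x^{i - na/p}(1-x)^{j - nb/p}(t-x)^{k - nc/p}\,dx
= x^{-\a}(1-x)^{-\b}(t-x)^{-\g}\,dx,
$$
using the definitions $\a = na/p - i$, $\b = nb/p - j$, $\g = nc/p - k$. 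Substituting $x=ts$ gives $(t-x) = t(1-s)$, so $(t-x)^{-\g} = t^{-\g}(1-s)^{-\g}$, and $x^{-\a} = t^{-\a}s^{-\a}$, while $(1-x)^{-\b} = (1-ts)^{-\b}$. Collecting the powers of $t$ (namely $t^{-\a}\cdot t^{-\g}\cdot t$ from $dx = t\,ds$) yields the prefactor $t^{1-\a-\g}$, and
$$
\int_{\d_0}\o_n^{ijk} = t^{1-\a-\g}\int_0^1 s^{-\a}(1-s)^{-\g}(1-ts)^{-\b}\,ds.
$$
Matching with the Euler integral with $A-1 = -\a$, $B-1 = -\g$, $C = \b$, i.e. $A = 1-\a$, $B = 1-\g$, gives exactly
$$
\int_{\d_0}\o_n^{ijk} = B(1-\a,1-\g)\, t^{1-\a-\g}\,\F{1-\a,\b}{2-\a-\g}{t},
$$
as claimed. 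One must check that the exponents $1-\a$, $1-\g$ have positive real part (which holds since $0 < na/p, nc/p < n < p$ forces $\a,\g < 1$ for $i=j=k=0$; for general $i,j,k\ge 0$ the statement is understood via analytic continuation, or one restricts to the range where the integrals converge), and that the branch of $y$ chosen defines a single-valued integrand on the open segment.

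\emph{Step 2: the integral over $\d_1$.} This is the same computation with the substitution $x = t + (1-t)s$, so $s\in[0,1]$, $dx = (1-t)\,ds$, and now $(1-x) = (1-t)(1-s)$, $(x-t) = (1-t)s$. The only new feature is the branch factor: on $\d_1$ the integrand is $y = \e^c\sqrt[p]{x^a(1-x)^b(x-t)^c}$, so $y^n = \e^{nc}\bigl(x^a(1-x)^b(x-t)^c\bigr)^{n/p}$, and writing $(t-x)^k = (-1)^k(x-t)^k$ we get
$$
\o_n^{ijk} = (-1)^k\,\e^{-nc}\,x^{-\a}(1-x)^{-\b}(x-t)^{-\g}\,dx.
$$
Here $\e^{-nc}$ should be rewritten: since $\e^p = \pm 1$ is a root of unity and $\g = nc/p - k$, one has $\e^{-nc} = \e^{-p\g}\e^{-pk} = \e^{-p\g}\cdot(\e^p)^{-k}$; combined with the $(-1)^k$ this produces the clean factor $\e^{p\g}$ stated in the lemma (one checks the two cases $p=2$, $\e=i$ and $p$ odd, $\e=-1$ separately — this is the one place where the definition of $\e$ is used, precisely so that the bad sign cancels). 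After the substitution, $x^{-\a} = (t + (1-t)s)^{-\a}$; to bring this into Euler form one factors out, say, a power of the value at $s=0$. Writing $x = t(1 - (1-t^{-1})s)\cdot$ is awkward; cleaner is to note the symmetry $x \leftrightarrow 1-x$, i.e. set $s' = 1-s$ so that $s'$ runs $1\to 0$ and $x = 1 - (1-t)s'$, giving $(1-x) = (1-t)s'$, $(x-t) = (1-t)(1-s')\cdot$ — wait, recompute: with $x = t+(1-t)(1-s') = 1 - (1-t)s'$ we get $1-x = (1-t)s'$ and $x - t = (1-t)(1-s')$. Then $x^{-\a} = (1 - (1-t)s')^{-\a}$, and collecting powers of $(1-t)$ — namely $(1-t)^{-\b}$ from $(1-x)^{-\b}$, $(1-t)^{-\g}$ from $(x-t)^{-\g}$, and $(1-t)$ from $dx$ — gives the prefactor $(1-t)^{1-\b-\g}$ and
$$
\int_{\d_1}\o_n^{ijk} = \e^{p\g}(1-t)^{1-\b-\g}\int_0^1 (s')^{-\b}(1-s')^{-\g}\bigl(1-(1-t)s'\bigr)^{-\a}\,ds'.
$$
Matching the Euler integral with $A = 1-\b$, $B = 1-\g$, $C = \a$, $z = 1-t$ yields
$$
\int_{\d_1}\o_n^{ijk} = \e^{p\g}\,B(1-\b,1-\g)\,(1-t)^{1-\b-\g}\,\F{\a,1-\b}{2-\b-\g}{1-t},
$$
which is the second formula.

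\emph{Main obstacle.} The calculus is routine; the two points requiring genuine care are (i) the branch-tracking that produces the factor $\e^{p\g}$ — one must verify that the prescribed choices of $p$-th roots on $\d_0$ and $\d_1$, together with the definition of $\e$, make the integrands single-valued and honestly equal to the elementary power functions written above, with no stray roots of unity — and (ii) convergence/analytic-continuation of the resulting beta integrals when $i,j,k$ are large enough that $\Re(1-\a)$ or $\Re(1-\g)$ fails to be positive, so that the identities are to be read as identities of (multivalued, locally holomorphic in $t$) analytic continuations rather than of literal convergent integrals. Neither is deep, but (i) is where the otherwise-mysterious definition of $\e$ earns its place.
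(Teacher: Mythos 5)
Your proposal follows essentially the same route as the paper: both integrals are reduced to Euler's integral representation of ${}_2F_1$. The only structural difference is in the treatment of $\d_1$: you reparametrize by $s\mapsto 1-s$ so as to land directly on an Euler integral in the variable $1-t$, whereas the paper keeps the parametrization $x=t+(1-t)s$, obtains an Euler integral in the variable $1-\frac1t$, and then invokes the transformation $\F{a,c-b}{c}{1-\frac{1}{t}}=t^a\F{a,b}{c}{1-t}$. The two are interchangeable; yours saves one identity. Your caveat about reading the formulas as analytic continuations when $\Re(1-\a)$ or $\Re(1-\g)$ is not positive is sensible (and not addressed in the paper either; in all later applications one has $1-\a,1-\g>0$ so the integrals genuinely converge).

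One concrete correction to the branch bookkeeping in Step 2, which you rightly isolate as the only delicate point. Note first that $\e^p=-1$ in both cases ($(-1)^p=-1$ for $p$ odd, $i^2=-1$ for $p=2$); this is exactly what makes $\d_1$ land on the curve, since one needs $\e^{pc}=(-1)^c$. Your own algebra then gives the prefactor
$$(-1)^k\,\e^{-nc}=(\e^{p})^{k}\,\e^{-nc}=\e^{pk-nc}=\e^{-p\g},$$
not $\e^{p\g}$. The two agree precisely when $\e^{2p\g}=(\e^2)^{nc-pk}=1$, which holds for $p$ odd (where $\e^2=1$) but fails for $p=2$ (where $\e^2=-1$ and necessarily $n=c=1$, so $nc-pk$ is odd). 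So the claim that ``one checks the two cases separately and the bad sign cancels'' is false for $p=2$: there your computation yields $-\e^{p\g}$, i.e.\ the displayed formula is off by a sign unless the exponent is read as $-p\g$. (The same $p=2$ sign ambiguity is present in the paper's own deduction of \eqref{period omega} from the lemma, and it is harmless downstream since all later statements are taken modulo $K'^\times\ni i$ and the non-vanishing argument assumes $p>2$.) You should either record the exponent as $-p\g$, or explicitly note the residual sign at $p=2$, rather than assert that both cases verify.
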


\begin{proof}
The first equality follows directly from Euler's integral representation of the Gauss hypergeometric function ${}_2F_1$: 
$$B(b,c-b)\cdot \F{a,b}{c}{t}=\int_0^1(1-tx)^{-a}x^{b-1}(1-x)^{c-b-1}\; dx$$
(let $a=\b$, $b=1-\a$, $c=1-\a-\g$). 
The second one follows from the same formula and the transformation formula
$$\F{a,c-b}{c}{1-\frac{1}{t}}=t^a \cdot \F{a,b}{c}{1-t}. $$
\end{proof}

\subsection{Cohomology of the fiber}

We have decompositions
\begin{align*}
&H^1(Y_t,\C) = \bigoplus_{n=1}^{p-1} H^1(Y_t,\C)^{(n)}, \\
&H_1(Y_t,\Q(\mu_p)) = \bigoplus_{n=1}^{p-1} H_1(Y_t,\Q(\mu_p))^{(n)},\end{align*}
where ${}^{(n)}$ denotes the subspace on which $\s^*$ (resp. $\s_*$) acts as the multiplication by $\z_p^n$. 
Note that $H^1(Y_t,\C)^{(0)}=0$ since $Y_t/\mu_p$ is a rational curve. 
The natural paring induces a non-degenerate pairing 
$$H^1(Y_t,\C)^{(n)} \ot H_1(Y_t,\Q(\z_p))^{(n)} \ra \C.$$
We shall give bases of these spaces under a certain assumption. 

\begin{lem}\label{holo cond}
Let $n=1,\dots, p-1$ and $i, j, k \ge 0$ be integers. 
\begin{enumerate}
\item If $p \nmid a+b+c$, then $\o_n^{ijk}$ is a differential form of the second kind. 
\item
Moreover, $\o_n^{ijk}$ is holomorphic if and only if
\begin{align*}
& i \ge \frac{na+1}{p}-1, \quad j \ge \frac{nb+1}{p}-1, \quad k \ge \frac{nc+1}{p}-1, \\ 
& i+j+k \le \frac{n(a+b+c)-1}{p}-1. 
\end{align*}
\end{enumerate}
\end{lem}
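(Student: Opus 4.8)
The plan is to work with the affine curve $y^p = x^a(1-x)^b(t-x)^c$ and analyze the order of vanishing of $\o_n^{ijk}$ at the relevant points. The key observation is that $Y_t$ has four special points lying over $x \in \{0,1,t,\infty\}$, and since $p \nmid a+b+c$ we will see that the covering $Y_t \to \P^1_x$ is totally ramified at each of these (the ramification index is $p$ in each case because $\gcd$ of the relevant exponent with $p$ is $1$ — here we use that $p$ is prime and $0<a,b,c<p$). So there is a single point $P_0, P_1, P_t, P_\infty$ of $Y_t$ above each, and a local uniformizer at $P_0$ (resp. $P_1$, $P_t$) can be taken so that $x$ (resp. $1-x$, $t-x$) has a zero of order $p$.

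First I would compute $\ord_{P_0}(\o_n^{ijk})$. Writing $x = u^p \cdot (\text{unit})$ near $P_0$ for a uniformizer $u$, we have $y = x^{a/p}(1-x)^{b/p}(t-x)^{c/p}$ locally, so $y^n = x^{na/p}\cdot(\text{unit})$, and $dx = p u^{p-1} du \cdot(\text{unit})$. Hence $\o_n^{ijk}$ has order $pi + (p-1) - na$ at $P_0$ when written in $u$; one must be slightly careful that $na/p$ need not be an integer, but after absorbing the integer part $\lfloor na/p\rfloor$ of the exponent into $x$ and noting $x^{na/p - \lfloor na/p\rfloor}$ contributes $p\{na/p\}$ to the order in $u$, the net order at $P_0$ is $pi + p - 1 - na + (\text{correction})$; doing the bookkeeping carefully gives $\ord_{P_0}\o_n^{ijk} = p(i+1) - na - 1 = p(i + 1 - \tfrac{na+1}{p})$. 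Similarly $\ord_{P_1} = p(j+1) - nb - 1$ and $\ord_{P_t} = p(k+1) - nc - 1$. For the point at infinity, set $x = 1/w$; then $x^a(1-x)^b(t-x)^c = w^{-(a+b+c)}(\text{unit})$, $Y_t \to \P^1_x$ is again totally ramified at $P_\infty$ precisely because $p \nmid a+b+c$, and $dx = -w^{-2}dw$; chasing through, $\ord_{P_\infty}\o_n^{ijk} = n(a+b+c) - p(i+j+k) - p - 1 = p\big(\tfrac{n(a+b+c)-1}{p} - 1 - (i+j+k)\big)$.

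Next I would assemble these. For part (i), a form of the second kind is one with no residues; since $\o_n^{ijk}$ is a global rational differential whose only possible poles are at $P_0, P_1, P_t, P_\infty$, I would show each residue vanishes. The orders computed above are all $\equiv -1 \pmod p$ at the finite points and at infinity (because $p \nmid a+b+c$, the exponent $n(a+b+c)$ is never divisible by $p$ either, since $\gcd(n,p)=1$), so in particular the order is never exactly $-1$ — a pole, if present, has order a multiple of $p$ shifted by $-1$, hence order $\le -p-1 < -1$, and more to the point, by the $\mu_p$-symmetry \eqref{omega eigen} a form in the $\z_p^n$-eigenspace with $n \not\equiv 0$ automatically has zero residue at a totally ramified fixed point of $\s$ (the residue would lie in the $\z_p^n$-eigenspace of a one-dimensional space on which $\s$ acts trivially, forcing it to be $0$). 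That gives (i). For part (ii), holomorphy is equivalent to all four orders being $\ge 0$, i.e.
\begin{align*}
p(i+1) - na - 1 &\ge 0, & p(j+1) - nb - 1 &\ge 0, \\
p(k+1) - nc - 1 &\ge 0, & n(a+b+c) - p(i+j+k) - p - 1 &\ge 0,
\end{align*}
which rearrange exactly to $i \ge \frac{na+1}{p}-1$, $j \ge \frac{nb+1}{p}-1$, $k \ge \frac{nc+1}{p}-1$, and $i+j+k \le \frac{n(a+b+c)-1}{p}-1$.

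The main obstacle is the careful local computation of the orders — in particular, tracking how the fractional exponents $\{na/p\}$, $\{nb/p\}$, $\{nc/p\}$ interact with the ramification, and making sure the total ramification claim at all four points (especially $\infty$, where $p \nmid a+b+c$ is doing the work) is justified. Once the four order formulas are pinned down, parts (i) and (ii) are immediate. I would double-check the computations via the degree/genus count: the number of $(i,j,k)$ satisfying the inequalities in (ii) should equal $\dim H^1(Y_t)^{(n)}$ summed appropriately, consistent with the genus $p-1$ from Riemann--Hurwitz, which is a useful sanity check but not logically needed for the proof.
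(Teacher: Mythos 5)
Your proposal is correct, and it is self-contained where the paper is not: the paper's ``proof'' of this lemma is just a citation to Archinard's paper on hypergeometric abelian varieties (with a sign correction), so you have essentially reconstructed the computation behind that reference. Your order formulas check out. Since $p$ is prime and $0<a,b,c<p$, the normalization is totally ramified over $x=0,1,t$, with $\ord(x)=p$, $\ord(y)=a$ at $P_0$ (from $p\,\ord(y)=a\,\ord(x)$), etc., giving $\ord_{P_0}\o_n^{ijk}=pi-na+(p-1)$ and its analogues; over $x=\infty$ the hypothesis $p\nmid a+b+c$ gives total ramification and $\ord_{P_\infty}\o_n^{ijk}=n(a+b+c)-p(i+j+k)-p-1$. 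Requiring all four orders to be nonnegative is exactly the stated system of inequalities, so (ii) is complete. One small caution on (i): the observation that the order is never exactly $-1$ does \emph{not} by itself kill the residues, since a pole of order $\ge 2$ can perfectly well have a nonzero residue; that clause of your argument proves nothing. But the eigenvalue argument you then give is the correct and sufficient one: at a point $P$ fixed by $\s$ one has $\Res_P(\o)=\Res_P(\s^*\o)=\z_p^n\Res_P(\o)$, forcing $\Res_P(\o)=0$ for $n\not\equiv 0 \pmod p$, and the four ramification points are the only possible poles. I would simply delete the ``order $\ne -1$'' remark and keep the symmetry argument. The genus sanity check at the end is a nice touch but, as you say, not needed.
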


\begin{proof}
See \cite{archinard} (18) (but see loc. cit. (13) for the correct sign in the fourth inequality).
\end{proof}

From now on, we assume:
\begin{equation*}
b+c=p.
\end{equation*}
Then the condition $p \nmid a+b+c$ is automatically satisfied. 
By Lemma \ref{holo cond}, $\o^{ijk}_n$ is holomorphic if and only if 
$$i=\left\lceil  \frac{na+1}{p}\right\rceil-1, \quad j=\left\lceil  \frac{nb+1}{p}\right\rceil-1, \quad
k=\left\lceil  \frac{nc+1}{p}\right\rceil-1,$$
and we write this $\o_n^{ijk}$ simply as $\o_n$.  
The $\a$, $\b$, $\g$ in Lemma \ref{period} become
\begin{equation*}\label{alphabetagamma}
\a=\left\{ \frac{na}{p}\right\}, \quad \b=\left\{\frac{nb}{p}\right\}, \quad 
\g=\left\{\frac{nc}{p}\right\}=1-\b.
\end{equation*}
In particular, $0<\a,\b,\g<1$. 
Though these depend on $n$, we shall suppress it from the notation. 
By Lemma \ref{period}, we have
\begin{equation}\label{period omega}
\begin{split}
\int_{\d_0}\o_n&=B(1-\a,\b)\cdot t^{\b-\a}\F{1-\a,\b}{1-\a+\b}{t},\\
\int_{\d_1}\o_n&=-\ve^{p\b}B(1-\b,\b)\cdot \F{\a,1-\b}{1}{1-t}.
\end{split}\end{equation}

For each $n$, let $i,j,k$ be as above and put
$$\y_n=\o_n^{i,j+1,k}.$$
Then, $\b$ is replaced with $\b-1$ in Lemma \ref{period} and we obtain
\begin{equation}\label{period eta}
\begin{split}
\int_{\d_0}\y_n&=B(1-\a,\b)\cdot t^{\b-\a}\F{1-\a,\b-1}{1-\a+\b}{t},\\
\int_{\d_1}\y_n&=-\ve^{p\b}B(1-\b,\b)\cdot(1-\b) (1-t)\F{\a,2-\b}{2}{1-t}.
\end{split}\end{equation}
Here we used $B(2-\b,\b)=(1-\b)B(1-\b,\b)$. 

\begin{ppn}Let $n=1,\dots, p-1$ and $0<t<1$. Then, 
$\{\o_n,\y_n\}$ is a basis of $H^1(Y_t,\C)^{(n)}$. 
\end{ppn}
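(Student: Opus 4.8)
The plan is to show that $\{\o_n, \y_n\}$ is linearly independent in $H^1(Y_t,\C)^{(n)}$, which suffices since this eigenspace is $2$-dimensional: indeed, $H^1(Y_t,\C) = \bigoplus_{n=1}^{p-1} H^1(Y_t,\C)^{(n)}$ with all summands of the same dimension (by the $\mu_p$-symmetry), and $\dim H^1(Y_t,\C) = 2(p-1)$ since the genus of $Y_t$ is $p-1$; hence each $H^1(Y_t,\C)^{(n)}$ has dimension $2$. Both $\o_n$ and $\y_n$ lie in this eigenspace by \eqref{omega eigen} (the exponent of $y$ is $n$ in each). Moreover, by Lemma \ref{holo cond}(i) and the assumption $b+c=p$, both forms are of the second kind, so they define genuine classes in de Rham cohomology.

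First I would note that $\o_n = \o_n^{ijk}$ is holomorphic by construction, while $\y_n = \o_n^{i,j+1,k}$ has the exponent $j$ increased by $1$, which, by the inequalities in Lemma \ref{holo cond}(ii), violates the upper bound $i+j+k \le \frac{n(a+b+c)-1}{p}-1$ (since $i+j+k$ was already maximal for holomorphy); hence $\y_n$ is not holomorphic and thus is not a scalar multiple of $\o_n$. That already shows the two classes are distinct in de Rham cohomology, but to get genuine linear independence over $\C$ I would use the period pairing with the cycles $\k_0, \k_1$. By \eqref{kappa period}, pairing against $\k_m$ reduces to the integrals $\int_{\d_m}$ computed in \eqref{period omega} and \eqref{period eta}. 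Linear independence of $\{\o_n,\y_n\}$ then follows from non-vanishing of the $2\times 2$ period matrix
$$\det\begin{pmatrix} \int_{\k_0}\o_n & \int_{\k_1}\o_n \\ \int_{\k_0}\y_n & \int_{\k_1}\y_n\end{pmatrix} \ne 0.$$

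To evaluate this determinant, I would factor out the common prefactors: from the $\k_0$-column, $(1-\z_p^n)B(1-\a,\b)t^{\b-\a}$; from the $\k_1$-column, $-(1-\z_p^n)\ve^{p\b}B(1-\b,\b)$. These are all nonzero (here $0<\a,\b<1$ makes the beta values finite and nonzero, $t^{\b-\a}\ne 0$, and $1-\z_p^n\ne 0$ for $1\le n\le p-1$). The remaining determinant is
$$\F{1-\a,\b}{1-\a+\b}{t}\cdot(1-\b)(1-t)\F{\a,2-\b}{2}{1-t} - \F{1-\a,\b-1}{1-\a+\b}{t}\cdot\F{\a,1-\b}{1}{1-t},$$
which is (a nonzero scalar times) a Wronskian-type expression for two solutions of the hypergeometric differential equation. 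The cleanest route is to observe that this quantity, as a function of $t$ on $(0,1)$, is essentially the Wronskian of two independent local solutions of the rank-$2$ hypergeometric ODE satisfied by the period functions, hence is a nonzero constant multiple of $t^{c_0}(1-t)^{c_1}$ for explicit exponents $c_0, c_1$ and in particular never vanishes; alternatively one evaluates the limit as $t\to 1^-$ (where the first product tends to $0$ and the second term tends to a nonzero beta-type value via Gauss's ${}_2F_1(1)$ formula) or uses the known connection formulae between the ${}_2F_1$ solutions at $0$ and at $1$.

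The main obstacle is the last step: verifying that this Wronskian-type combination of ${}_2F_1$'s is nonvanishing. Noting that $\o_n$ is holomorphic and $\y_n$ is not is enough to show the two classes are distinct, but to rule out a nontrivial $\C$-linear relation of the form $\l\o_n + \m\y_n = 0$ in cohomology (with $\l,\m$ not both zero) one genuinely needs the determinant to be nonzero; a slick alternative is to pair with the two cycles and note that, by the explicit formulas, the leading behavior of $\int_{\k_0}(\cdot)$ and $\int_{\k_1}(\cdot)$ as $t\to 0$ or $t\to 1$ are governed by different powers of $t$ or $1-t$ for $\o_n$ versus $\y_n$, which forces $\l=\m=0$. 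I expect the cleanest writeup to use precisely this asymptotic argument rather than a direct Wronskian evaluation.
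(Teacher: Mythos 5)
Your proposal is correct and follows essentially the same route as the paper: the paper's own proof of this proposition uses the non-vanishing of the $\d_m$-periods plus the observation that $\o_n$ is holomorphic while $\y_n$ is not, together with the dimension count $\dim H^1(Y_t,\C)=2(p-1)$, and the period-matrix determinant you propose (with the $t\to 1$ limit evaluated via the logarithmic expansion of ${}_2F_1$ near $1$ and Gauss's summation formula) is exactly the computation the paper carries out in the lemma following Proposition \ref{kappa}. Your extra care about distinguishing ``non-holomorphic as a form'' from ``class not in $F^1$'' is a reasonable refinement, and your determinant argument settles it just as the paper does.
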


\begin{proof}
By \eqref{omega eigen}, \eqref{kappa period}, \eqref{period omega} and \eqref{period eta}, 
$\o_n$, $\y_n$ are non-trivial elements of  $H^1(Y_t,\C)^{(n)}$. Since $\o_n$ is holomorphic and $\y_n$ is not, they are linearly independent. Since $\dim H^1(Y_t,\C)=2(p-1)$, the proposition follows. 
\end{proof}

\begin{ppn}\label{kappa}
Let $n=1,\dots, p-1$ and $0<t<1$. 
\begin{enumerate}
\item The projections of $\k_0, \k_1$ form a basis of $H_1(Y_t,\Q(\mu_N))^{(n)}$. 
\item As a $\Q[\s]$-module, $H_1(Y_t,\Q)$ is generated by $\k_0$ and $\k_1$. 
\end{enumerate}
\end{ppn}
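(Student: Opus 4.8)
The plan is to deduce (ii) formally from (i), and to prove (i) by computing a $2\times 2$ period determinant. For (ii), observe first that $\s$ has order $p$ and $H^1(Y_t,\C)^{(0)}=0$, so the eigenvalues of $\s_*$ on $H_1(Y_t,\C)$ are exactly $\z_p^n$, $1\le n\le p-1$; thus $\s$ acts on $H_1(Y_t,\Q)$ with minimal polynomial $\Phi_p(X)=1+X+\cdots+X^{p-1}$, and $H_1(Y_t,\Q)$ becomes a $2$-dimensional vector space over the field $\Q[X]/(\Phi_p)\cong\Q(\z_p)$ (as $\dim_\Q H_1(Y_t,\Q)=2(p-1)$). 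Hence ``$\k_0,\k_1$ generate $H_1(Y_t,\Q)$ as a $\Q[\s]$-module'' is equivalent to ``$\k_0,\k_1$ are $\Q(\z_p)$-linearly independent''. If $f_0(\s)\k_0+f_1(\s)\k_1=0$ with $f_i\in\Q[X]$, then projecting into the $\z_p^n$-eigenspace gives $f_0(\z_p^n)\k_0^{(n)}+f_1(\z_p^n)\k_1^{(n)}=0$ for each $n$, and (i) forces $f_i(\z_p^n)=0$ for all $n=1,\dots,p-1$, so $\Phi_p\mid f_i$ and $f_i(\s)=0$.

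For (i), the non-degenerate pairing with $H^1(Y_t,\C)^{(n)}$ --- which is $2$-dimensional by the preceding proposition --- shows $\dim_{\Q(\z_p)}H_1(Y_t,\Q(\z_p))^{(n)}=2$, so it suffices to show the images $\k_0^{(n)},\k_1^{(n)}$ of $\k_0,\k_1$ are linearly independent. Because $\s^*$ is adjoint to $\s_*$ under the pairing, a class in $H^1(Y_t,\C)^{(n)}$ pairs with $\k_a$ only through its $\z_p^n$-component $\k_a^{(n)}$; so by non-degeneracy it is enough that the period matrix $\bigl(\int_{\k_a}\o_n,\ \int_{\k_a}\y_n\bigr)_{a=0,1}$ be invertible. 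By \eqref{kappa period} its determinant is $(1-\z_p^n)^2\neq 0$ times $\det\bigl(\int_{\d_a}\o_n,\ \int_{\d_a}\y_n\bigr)_{a=0,1}$, and substituting \eqref{period omega} and \eqref{period eta} the latter equals the nowhere-vanishing elementary factor $-\ve^{p\b}B(1-\a,\b)B(1-\b,\b)\,t^{\b-\a}$ times
$$D(t)=\F{1-\a,\b}{1-\a+\b}{t}\,(1-\b)(1-t)\,\F{\a,2-\b}{2}{1-t}-\F{1-\a,\b-1}{1-\a+\b}{t}\,\F{\a,1-\b}{1}{1-t}.$$
Thus everything reduces to showing $D(t)\neq 0$ for every $t\in(0,1)$.

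This I would obtain from a dichotomy together with a limit computation. The paths $\d_0,\d_1$, hence the cycles $\k_0,\k_1$ and their $\z_p^n$-components, vary continuously over the connected interval $(0,1)$, so $\k_0^{(n)},\k_1^{(n)}$ are flat sections of the local system $t\mapsto H_1(Y_t,\C)^{(n)}$; since a flat section vanishing at one point vanishes identically, $D$ --- which measures the linear independence of $\k_0^{(n)},\k_1^{(n)}$ --- vanishes either nowhere or everywhere on $(0,1)$. To exclude the latter I would let $t\to 1^-$: the two hypergeometric series in $1-t$ tend to $1$, the factor $(1-t)$ absorbs the (at worst logarithmic) growth of $\F{1-\a,\b}{1-\a+\b}{t}$ at the singular point $t=1$, and $\F{1-\a,\b-1}{1-\a+\b}{1}$ converges by Gauss's summation theorem (its parameters satisfy $c-a-b=1>0$) to $\Gam{1-\a+\b}{\b,\,2-\a}\neq 0$; hence $D(t)\to-\Gam{1-\a+\b}{\b,\,2-\a}\neq 0$, so $D\not\equiv 0$ and therefore $D$ is nowhere zero on $(0,1)$, which proves (i). The main obstacle is precisely this last step --- upgrading non-vanishing near $t=1$ to non-vanishing on the whole interval; the flat-section argument does it cleanly once one checks that $\k_0,\k_1$ genuinely form locally constant families over all of $(0,1)$ (immediate from their explicit definition), the only analytic input being the behaviour of the Gauss hypergeometric function near $z=1$ in the logarithmic case $c=a+b$. (Alternatively one could identify $D(t)$, up to an elementary factor, with a Wronskian of two solutions of a hypergeometric differential equation and invoke Abel's identity, but that presupposes the Picard--Fuchs computation of Sect. 3.)
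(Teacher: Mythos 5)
Your proof is correct, and its analytic core --- reducing both statements to the non-vanishing of the period determinant $\det M_n(t)$ and evaluating its limit as $t\to 1^-$ via Gauss's summation theorem together with the logarithmic behaviour of ${}_2F_1$ in the case $c=a+b$ --- is exactly the computation the paper carries out in the lemma immediately following the proposition. The one step you handle differently is the globalization from ``non-vanishing near $t=1$'' to ``non-vanishing for every $t\in(0,1)$'': the paper asserts (without proof) that $\prod_{n=1}^{p-1}\det M_n(t)$ is constant in $t$ and concludes by taking the product of the limits, whereas you observe that $\k_0^{(n)},\k_1^{(n)}$ are flat sections of the local system $t\mapsto H_1(Y_t,\C)^{(n)}$ over the connected interval, so that their linear dependence at one point forces dependence everywhere, giving the dichotomy ``$\det M_n$ vanishes nowhere or identically''. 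Your route is self-contained and sidesteps the unproved constancy claim (which one would otherwise justify by Abel's identity applied to the Gauss--Manin connection of Proposition \ref{gauss-manin-g} --- computed only in Sect.\ 3, as you note); the paper's route, once constancy is granted, gets away with controlling the single product rather than each factor. Your explicit deduction of (ii) from (i), via $H_1(Y_t,\Q)^{(0)}=0$ so that $\Phi_p(\s_*)=0$ and $H_1(Y_t,\Q)$ is a two-dimensional $\Q(\z_p)$-vector space, fills in a step the paper leaves implicit. I see no gaps.
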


\begin{proof}
Put the period matrix
$$M_n(t)=\begin{pmatrix}
\int_{\k_0}\o_n & \int_{\k_0}\y_n\\
\int_{\k_1}\o_n & \int_{\k_1}\y_n
\end{pmatrix}.$$
It suffices to show that $\det M_n(t) \ne 0$. 
Since $\prod_{n=1}^{p-1} \det M_n(t)$ is constant, it coincides with its limit as $t \to 1$. 
Hence the proposition follows from the lemma below. 
\end{proof}

\begin{lem}
We have
$$\lim_{t \ra 1} \det M_n(t)= \ve^{p\b}(1-\z_p^n)^2 \cdot \frac{B(\b,1-\b)}{1-\a}.$$ 
\end{lem}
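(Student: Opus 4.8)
The plan is to compute the limit of $\det M_n(t)$ as $t \to 1$ by explicitly evaluating the four period integrals over $\kappa_0$ and $\kappa_1$. First I would use the relation $\int_{\kappa_m}\omega = (1-\zeta_p^n)\int_{\delta_m}\omega$ from \eqref{kappa period} to factor out $(1-\zeta_p^n)^2$ from $\det M_n(t)$; it then suffices to compute $\lim_{t\to 1}$ of the $2\times 2$ determinant built from $\int_{\delta_0}\omega_n$, $\int_{\delta_0}\eta_n$, $\int_{\delta_1}\omega_n$, $\int_{\delta_1}\eta_n$, which are all given explicitly by \eqref{period omega} and \eqref{period eta} in terms of ${}_2F_1$-values. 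The $\delta_1$-integrals are already expressed as ${}_2F_1$ at $1-t$, so their limits as $t \to 1$ are simply the values at $0$, namely ${}_2F_1\binom{\alpha,1-\beta}{1};0 = 1$ and the $\eta_n$ one $\to 0$ (it carries a factor $(1-t)$). So in the limit the matrix becomes lower-triangular-ish and $\det M_n(t) \to (1-\zeta_p^n)^2\left(\lim_{t\to1}\int_{\delta_0}\eta_n\right)\cdot\left(-\varepsilon^{p\beta}B(1-\beta,\beta)\right) - 0$, up to being careful about which entries survive.

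The main obstacle is that $\int_{\delta_0}\omega_n$ and $\int_{\delta_0}\eta_n$ are ${}_2F_1$-values at the argument $t\to 1$, i.e.\ at the point where the hypergeometric series need not converge, and here $\mathrm{Re}(c-a-b) = (1-\alpha+\beta)-(1-\alpha)-\beta = 0$ (resp.\ $=1$ for $\eta_n$), so for $\omega_n$ we are exactly at the borderline case. I would handle this by applying a Kummer-type connection formula (or the known asymptotics of ${}_2F_1$ near $1$) to re-express $t^{\beta-\alpha}{}_2F_1\binom{1-\alpha,\beta}{1-\alpha+\beta};t$ and $t^{\beta-\alpha}{}_2F_1\binom{1-\alpha,\beta-1}{1-\alpha+\beta};t$ in terms of hypergeometric functions in $1-t$; the logarithmically divergent pieces of $\int_{\delta_0}\omega_n$ and $\int_{\delta_0}\eta_n$ must cancel in the determinant (since $\det M_n(t)$ is constant in $t$, the answer is finite), leaving a finite limit. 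Concretely, the combination $\int_{\delta_0}\omega_n\cdot\int_{\delta_1}\eta_n - \int_{\delta_0}\eta_n\cdot\int_{\delta_1}\omega_n$ is what I need, and the $\delta_1$-factors are regular at $t=1$; writing $\int_{\delta_0}(\omega_n\cdot\int_{\delta_1}\eta_n - \eta_n\cdot\int_{\delta_1}\omega_n)$ as a single ${}_2F_1$-type expression and reading off its value at $t=1$ via Gauss's summation theorem ${}_2F_1\binom{a,b}{c};1 = \Gamma\binom{c,\,c-a-b}{c-a,\,c-b}$ should produce the beta-function $B(\beta,1-\beta)$ and the denominator $1-\alpha$.

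Alternatively — and this may be cleaner — since $\det M_n(t)$ is independent of $t$, I would exploit the Wronskian/Legendre-type relation: $\{\omega_n,\eta_n\}$ and the two cycles $\{\kappa_0,\kappa_1\}$ are a basis of flat sections and of de Rham classes respectively, so $\det M_n(t)$ is a period of the second-kind differential pairing, constant by the theory of the Gauss--Manin connection (this is already invoked in the proof of Proposition \ref{kappa}). Then I may compute the limit along a degeneration where the fiber $Y_t$ acquires a node as $t\to 1$, using the local monodromy and the residue of the vanishing cycle; the factor $(1-\zeta_p^n)^2$ comes from the $(1-\sigma)_*$ applied to the two thimbles $\delta_0,\delta_1$, and the surviving period is the pairing of $\omega_n$ (resp.\ $\eta_n$) against the vanishing cycle near $x=t=1$, which localizes to a standard beta-integral $\int_0^1 x^{\beta-1}(1-x)^{-\beta}\,dx = B(\beta,1-\beta)$, with the $\frac{1}{1-\alpha}$ arising from the normalization of the second form $\eta_n$ relative to $\omega_n$. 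Either route reduces to identifying a single ${}_2F_1$-value at $1$ via Gauss's theorem; I expect the bookkeeping of the $\varepsilon^{p\beta}$ sign and the precise power of $(1-\zeta_p^n)$ to be the fiddly part, but not conceptually hard.
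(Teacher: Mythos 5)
Your first route is essentially the paper's proof: factor out $(1-\z_p^n)^2$ via \eqref{kappa period}, observe that the $\d_1$-entries tend to $-\ve^{p\b}B(1-\b,\b)$ and $0$ respectively, kill the product of the borderline-divergent $\int_{\d_0}\o_n$ with the $(1-t)$-factor using the logarithmic expansion of ${}_2F_1$ at $1$, and evaluate the convergent $F\!\left({1-\a,\b-1\atop 1-\a+\b};1\right)=\frac{1}{(1-\a)B(1-\a,\b)}$ by Gauss's theorem. Only the sign bookkeeping (the surviving term is $-\int_{\d_0}\y_n\cdot\int_{\d_1}\o_n$) needs care, as you anticipate.
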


\begin{proof}
By \eqref{kappa period}, \eqref{period omega}, \eqref{period eta}, we have
\begin{align*}
\det M_n(t)=& -\ve^{p\b}(1-\z_p^n)^2B(1-\a,\b)B(1-\b,\b) t^{\b-\a}
\\& \times
\det 
\begin{pmatrix}
\F{1-\a,\b}{1-\a+\b}{t}&\F{1-\a,\b-1}{1-\a+\b}{t} \\
\F{\a,1-\b}{1}{1-t}&(1-\b)(1-t)\F{\a,2-\b}{2}{1-t}
\end{pmatrix}. 
\end{align*}
Firstly, we have
$$\lim_{t \ra 1} (1-t)\F{1-\a,\b}{1-\a+\b}{t}=0.$$
This follows from the transformation formula (cf.  \cite{bateman}, p. 74 (2))
\begin{align*}
\F{1-\a,\b}{1-\a+\b}{t}&=\frac{1}{B(1-\a,\b)} \sum_{n=0}^\infty \frac{(1-\a)_n(\b)_n}{(n!)^2}(k_n-\log(1-t))(1-t)^n, 
\\k_n&:=2 \psi(n+1)-\psi(1-\a+n)-\psi(\b+n)
\end{align*}
where $\psi(t)=\G'(t)/\G(t)$ is the digamma function. 
On the other hand, by Euler's formula, we have 
$$\F{1-\a,\b-1}{1-\a+\b}{1}=\G\left({1-\a+\b \atop 2-\a, \b}\right)=\frac{1}{(1-\a)B(1-\a,\b)}.$$
Hence the lemma follows. 
\end{proof}


\subsection{Periods of $X$}

Now we consider the fibration $f\colon X \ra \P^1$. Recall that $X_t \simeq Y_{t^l}$. 
By abuse of notation, for each $s=0,1$, let $\d_s$ (resp. $\k_s$) be the path (resp. loop) on $X_t$ which corresponds to the one on $Y_{t^l}$ defined in \S2.3. 
For each $s$, let $\D_s$ be the $2$-simplex obtained by sweeping $\d_s$ along $0 \le t \le 1$. 
Since $\d_s$ is vanishing as $t \to s$, 
the Lefschetz thimble $(1-\s)_*\D_s$ has boundary on the fiber $X_{1-s}$. 
We shall use $(1-\s)_*\D_1$ (resp. $(1-\s)_*\D_0$) to compute the periods (resp. regulators). 
Again by abuse of notation, let $\o_n$ denote the pull-back to $X$ of the rational $1$-form $\o_n$ on $Y$ defined in \S2.4. 
For $n=1,\dots, p-1$ and an integer $m$, define rational $2$-forms on $X$ by 
$$\o_{m,n} = t^m \frac{dt}{t}\wedge \o_n, \quad \y_{m,n} = t^m\frac{dt}{t} \wedge \y_n.$$
We have evidently, 
$$(\tau^i\s^j)^*\o_{m,n}=\z_l^{mi}\z_p^{nj} \o_{m,n}, \quad 
(\tau^i\s^j)^*\y_{m,n}=\z_l^{mi}\z_p^{nj} \y_{m,n}.$$

\begin{ppn}\label{2-period}
Let $n=1,\dots, p-1$ and $\a=\{\frac{na}{p}\}$, $\b=\{\frac{nb}{p}\}$ as before. 
For an integer $m$, put $\mu=m/l$. 
\begin{enumerate}
\item If $\m>\a-\b$, then we have
\begin{align*}
&\int_{\D_1} \o_{m,n}
=-\frac{\ve^{p\b}}{l} \cdot B(\b,\mu)B(1-\b,\b-\a+\mu), 
\\&\int_{\D_1} \y_{m,n}
=-\frac{\ve^{p\b}(1-\b)}{l(1-\a+\mu)}\cdot B(\b,\mu)B(1-\b,\b-\a+\mu).
\end{align*}
\item  We have 
\begin{align*}
&\int_{\D_0} \o_{m,n}=\frac{B(1-\a,\b)}{l(\b-\a+\mu)}\cdot \F{1-\a,\b,\b-\a+\mu}{1-\a+\b,\b-\a+\mu+1}{1},
\\&\int_{\D_0} \y_{m,n}
=\frac{B(1-\a,\b)}{l(\b-\a+\mu)}\cdot \F{1-\a,\b-1,\b-\a+\mu}{1-\a+\b,\b-\a+\mu+1}{1}.\end{align*}
\end{enumerate}
\end{ppn}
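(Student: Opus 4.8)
The plan is to compute each of the four $2$-periods as an iterated integral: first integrate $\o_n$ (resp.\ $\y_n$) over the path $\d_1$ (resp.\ $\d_0$) on the fiber $X_t \simeq Y_{t^l}$, then integrate the resulting function of $t$ against $t^m\,\tfrac{dt}{t}$ over $0\le t\le 1$. The inner integrals are exactly the $1$-periods computed in Lemma~\ref{period}; the only point to watch is the base change $t\mapsto t^l$, which replaces $t$ by $t^l$ in formulas \eqref{period omega} and \eqref{period eta} and contributes a factor $l\,\tfrac{ds}{s}$ under $s=t^l$, hence the $1/l$ in every answer. So after the substitution one is left, for part (ii), with an integral of the shape
\[
\int_0^1 s^{m/l}\,s^{\b-\a}\,\F{1-\a,\b}{1-\a+\b}{s}\,\frac{ds}{s}
\]
(and similarly with $\b-1$ in the numerator for $\y_{m,n}$). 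Expanding ${}_2F_1$ as a power series in $s$ and integrating term by term against $s^{\m+\b-\a-1}$ produces $\sum_r \tfrac{(1-\a)_r(\b)_r}{(1-\a+\b)_r r!}\cdot\tfrac{1}{\m+\b-\a+r}$, which is $\tfrac{1}{\b-\a+\m}\cdot{}_3F_2\!\big({1-\a,\b,\b-\a+\m\atop 1-\a+\b,\b-\a+\m+1};1\big)$ by the Pochhammer identity $\tfrac{1}{\m+\b-\a+r}=\tfrac{1}{\m+\b-\a}\cdot\tfrac{(\b-\a+\m)_r}{(\b-\a+\m+1)_r}$. This gives part (ii) directly; the interchange of sum and integral is justified by the integral representation on $0<s<1$ together with absolute convergence (here $0<\b<1$, so $\m+\b-\a>0$ on the relevant range, or one argues by analytic continuation in $\m$).

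For part (i) I would instead use the second formula of \eqref{period omega}, namely $\int_{\d_1}\o_n=-\ve^{p\b}B(1-\b,\b)\,\F{\a,1-\b}{1}{1-t}$, with $t$ replaced by $t^l$. Thus
\[
\int_{\D_1}\o_{m,n}=-\ve^{p\b}B(1-\b,\b)\int_0^1 t^{m}\,\F{\a,1-\b}{1}{1-t^l}\,\frac{dt}{t},
\]
and the substitution $u=1-t^l$, $t^m\tfrac{dt}{t}=-\tfrac1l (1-u)^{m/l-1}\,du$, turns this into $-\tfrac{\ve^{p\b}}{l}B(1-\b,\b)\int_0^1(1-u)^{\m-1}\,\F{\a,1-\b}{1}{u}\,du$. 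This last integral is a classical Euler-type evaluation: expanding ${}_2F_1$ and integrating $\int_0^1 u^r(1-u)^{\m-1}du=B(r+1,\m)$ term by term gives $B(\m,1)\,{}_3F_2\!\big({\a,1-\b,1\atop 1,\m+1};1\big)=\tfrac1\m\,{}_2F_1\!\big({\a,1-\b\atop\m+1};1\big)$, and then Gauss's summation theorem ${}_2F_1(\a,1-\b;\m+1;1)=\Gamma\!\big({\m+1,\b-\a+\m\atop\b+\m,1-\a+\m}\big)$ (valid since $\m>\a-\b$) collapses everything to $B(\b,\m)B(1-\b,\b-\a+\m)$ after using $B(1-\b,\b)=\Gamma(1-\b)\Gamma(\b)$ and rearranging Gamma factors. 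The computation for $\y_{m,n}$ is identical but starting from the second line of \eqref{period eta}, which carries the extra factor $(1-\b)(1-t)$; tracking it through the substitution yields the stated $\tfrac{1-\b}{1-\a+\m}$ correction (alternatively one observes $\y_n=\o_n-\o_n^{i,j,k}\cdot(\text{lower order})$ and reuses part of the first computation, but the direct route is cleaner).

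I expect the main obstacle to be bookkeeping rather than conceptual difficulty: correctly propagating the constants $\ve^{p\b}$, $B(1-\b,\b)$, the factor $(1-\b)(1-t)$ for $\y_n$, and especially the Jacobian factors from $t\mapsto t^l$ and $u=1-t^l$, while keeping the convergence hypothesis $\m>\a-\b$ exactly where it is needed (it is precisely the condition for Gauss's theorem to apply in part (i), and for the termwise integration near $s=0$ in part (ii)). A secondary subtlety is that the integral over $\D_1$ is an honest convergent integral only because $\d_1$ degenerates as $t\to1$ while the form stays bounded as $t\to0$; I would note at the outset why the iterated integral equals the integral over the $2$-simplex (Fubini, the integrand being continuous on the open simplex with integrable singularities on the boundary). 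Once the reductions to the two named hypergeometric evaluations are in place, the rest is the routine Pochhammer/Gamma manipulation sketched above.
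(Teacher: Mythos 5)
Your proposal is correct and follows essentially the same route as the paper: reduce to an iterated integral, substitute $s=t^l$ (whence the $1/l$), and evaluate $\int_0^1$ of a ${}_2F_1$ against a power of $t$ or $1-t$, yielding a ${}_3F_2(1)$ that in case (i) collapses to a ${}_2F_1(1)$ handled by Gauss's theorem under the hypothesis $\mu>\alpha-\beta$. The only cosmetic difference is that you rederive the Euler-type integral representation of ${}_3F_2$ by termwise integration, whereas the paper cites it directly from Slater.
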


\begin{proof}
Recall the integral representation of ${}_3F_2$ (cf. \cite[(4.1.2)]{slater}):
$$\G\left({c, e-c \atop e}\right)\F{a,b,c}{d,e}{t}=\int_0^1\F{a,b}{d}{tx}x^{c-1}(1-x)^{e-c-1}\, dx.$$
By \eqref{period omega}, we have
\begin{align*}
\int_{\D_1} \o_{m,n}
&=-\ve^{p\b}B(\b,1-\b) \int_0^1 \F{\a,1-\b}{1}{1-t^l}t^{m-1} \,dt
\\&=-\ve^{p\b}\frac{B(\b,1-\b)}{l}\int_0^1 \F{\a,1-\b}{1}{1-t}t^{\mu-1} \,dt
\\&=-\ve^{p\b}\frac{B(\b,1-\b)}{l}\int_0^1\F{\a,1-\b}{1}{t}(1-t)^{\mu-1}\, dt
\\&=-\ve^{p\b}\frac{B(\b,1-\b)}{l\mu}\F{\a,1-\b,1}{1,\mu+1}{1}
\\&=-\ve^{p\b}\frac{B(\b,1-\b)}{l\mu}\F{\a,1-\b}{\mu+1}{1}, 
\end{align*}
which converges by the assumption. 
Using Euler's formula 
$$\F{a,b}{c}{1}=\G\left({c,c-a-b\atop c-a,c-b}\right) \quad (\Re(c-a-b)>0)$$
and the functional equations
$$\G(x+1)=x\G(x), \ B(x,y)=\G\left({x,y\atop x+y}\right),$$
we obtain the first equality of (i). 
The others follow similarly, using \eqref{period eta} for $\y_{m,n}$. 
\end{proof}

\section{Canonical Extension}

In this section, we compute the Gauss--Manin connection of the fibration and determine its canonical extension to $\P^1$. 

\subsection{Gauss--Manin connection}

Let us start with the fibration $g \colon Y \to \P^1$; for a while, $t$ denotes the coordinate of the base scheme of $g$. 
Put
$$T=\P^1\setminus\{0,1,\infty\}, \quad Y_T=Y \times_{\P^1} T.$$
Then the restriction $g\colon Y_T \to T$ is smooth. 
Put
$$\sH=R^1g_*\O^\bullet_{Y_T/T}, \quad \O_T^1=\O^1_{T/\ol\Q},$$
and let
$$\nabla \colon \sH \to \O_T^1 \ot \sH$$
be the Gauss--Manin connection. 
For each $n=1,\dots, p-1$, let $\sH^{(n)} \subset \sH$ be the subbundle 
on which $\s^*$ acts as the multiplication by $\z_p^n$. 
Then $\sH^{(n)}$ is locally generated by $\o_n$, $\y_n$ as defined in Sect. 2.4, and the Hodge filtration $F^1\sH^{(n)}$ is generated by $\o_n$. 

\begin{ppn}\label{gauss-manin-g}
For $n=1,\dots, p-1$, the Gauss--Manin connection 
$$\nb \colon \sH^{(n)} \ra \O_T^1 \ot \sH^{(n)}$$
is given by
$$(\nb\o_n,\nb\y_n)=\frac{dt}{t}\ot(\o_n,\y_n)
\begin{pmatrix}1-\b & 0\\0 & 1-\a\end{pmatrix}
\begin{pmatrix}-1 & -1\\(1-t)^{-1} &1\end{pmatrix}$$
where we put $\a=\{\frac{na}{p}\}$, $\b=\{\frac{nb}{p}\}$ as before. 
\end{ppn}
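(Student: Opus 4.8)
The plan is to determine the connection matrix from the period computations of \S2. Write $\nb(\o_n,\y_n)=(\o_n,\y_n)\,A(t)\,dt$; since $\o_n$ is holomorphic and $\y_n$ is not, $\{\o_n,\y_n\}$ is an $\sO_T$-frame of $\sH^{(n)}$, so $A(t)\in M_2(\ol\Q(t))$. For a locally constant family of cycles $\k_m$ the Gauss--Manin connection satisfies $\tfrac{d}{dt}\int_{\k_m}\eta=\int_{\k_m}\nb_{\pd_t}\eta$, so with the period matrix $M_n(t)$ of Proposition \ref{kappa} (rows indexed by $\k_0,\k_1$, columns by $\o_n,\y_n$) one gets the matrix identity $M_n'(t)=M_n(t)\,A(t)$. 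Since $\det M_n(t)\ne 0$ by Proposition \ref{kappa}, this determines $A(t)=M_n(t)^{-1}M_n'(t)$; it therefore suffices to verify that the stated matrix satisfies $M_n'(t)=M_n(t)\,A(t)$, i.e.\ that each row of $M_n(t)$ --- by \eqref{kappa period}, the $\d_0$- and $\d_1$-period vectors $(\int_{\d_s}\o_n,\int_{\d_s}\y_n)$ up to constant factors --- solves the first-order system $\tfrac{d}{dt}(\,\cdot\,)=(\,\cdot\,)A(t)$.

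Plugging in \eqref{period omega} and \eqref{period eta}, this reduces to a short list of classical ${}_2F_1$ identities. For the $\d_0$-row, put $u=t^{\b-\a}\F{1-\a,\b}{1-\a+\b}{t}$ and $v=t^{\b-\a}\F{1-\a,\b-1}{1-\a+\b}{t}$; the relations to check are $(u-v)'=\tfrac{1-\a}{1-t}\,v$ and $t\,v'=(\b-1)u+(1-\a)v$, and these follow from Euler's transformation $\F{a,b}{c}{z}=(1-z)^{c-a-b}\F{c-a,c-b}{c}{z}$, the contiguity relation $\F{a,b}{c}{z}-\F{a,b-1}{c}{z}=\tfrac{az}{c}\F{a+1,b}{c+1}{z}$, and the derivative formulas $\tfrac{d}{dz}\bigl[z^{c-1}\F{a,b}{c}{z}\bigr]=(c-1)z^{c-2}\F{a,b}{c-1}{z}$ and $\tfrac{d}{dz}\F{a,b}{c}{z}=\tfrac{ab}{c}\F{a+1,b+1}{c+1}{z}$, applied with $(a,b,c)=(1-\a,\b,1-\a+\b)$. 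The $\d_1$-row is verified in the same manner using the expansions of \eqref{period omega}, \eqref{period eta} in the variable $1-t$ and the analogous identities, the only new feature being the extra factor $(1-\b)(1-t)$ in $\int_{\d_1}\y_n$. One could instead bypass the periods altogether: differentiating the rational forms on the total space and using $\pd y/\pd t=\tfrac{c}{p}\cdot y/(t-x)$ gives $\nb_{\pd_t}\o_n^{ijk}=-(1-\b)\,\o_n^{i,j,k-1}$ and $\nb_{\pd_t}\y_n=-(1-\b)\,\o_n^{i,j+1,k-1}$, after which one reduces these rational forms to $\{\o_n,\y_n\}$ modulo the exact forms $d(x^{i'}(1-x)^{j'}(t-x)^{k'}y^{-n})$ together with $x+(1-x)=1$ and $x+(t-x)=t$ (for instance $(1-t)\o_n^{i,j,k-1}=\o_n^{i,j+1,k-1}-\o_n$ and $(1-\b)\o_n^{i,j+1,k-1}=\a\,\o_n^{i-1,j+1,k}+(1-\b)\o_n$, continuing in this way).

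The main point is bookkeeping rather than any conceptual difficulty. In the verification route one must choose the right chain of contiguity and transformation identities so that the derivatives of the period functions land back in the span of the undifferentiated pair, and then carry the expansions around $t=0$ and $t=1$ consistently through the two rows; the non-degeneracy of $M_n(t)$ from Proposition \ref{kappa} is exactly what guarantees that a successful check on both rows determines $A(t)$ uniquely. In the de Rham route the reduction of $\o_n^{i,j,k-1}$ and $\o_n^{i,j+1,k-1}$ cascades through several auxiliary forms $\o_n^{i'j'k'}$, and the delicate part is organizing it so that it terminates in $\{\o_n,\y_n\}$. As a sanity check, the stated matrix has logarithmic poles exactly at $0$, $1$, $\infty$, with nilpotent residue at $t=1$ (consistent with the semistable fiber $Y_1$) and residue eigenvalues $\{0,\b-\a\}$ at $t=0$ (consistent with the collision of the branch points $x=0$ and $x=t$).
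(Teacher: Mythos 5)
Your proposal is correct and follows essentially the same route as the paper: reduce to the first-order system $M_n'(t)=M_n(t)A(t)$ satisfied by the $\d_0$- and $\d_1$-period vectors (using the non-degeneracy of $M_n(t)$ from Proposition \ref{kappa}), then verify each row by classical derivative, contiguity and Euler-transformation identities for ${}_2F_1$; your identities for the $\d_0$-row check out, and your choice of working with the difference $u-v$ even avoids the separate case $\a=\b$ that the paper has to treat. The $\d_1$-row and the alternative reduction of rational forms on the total space are only sketched, but the stated starting identities there are also correct.
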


\begin{proof}
We use the following standard derivation relations among Gauss hypergeometric functions (cf. \cite{slater}, (1.4.1.1), (1.4.1.6)):
\begin{align}
& \frac{d}{dt} \F{a,b}{c}{t}= \frac{ab}{c}\F{a+1,b+1}{c+1}{t},\label{c+1}\\
& \frac{d}{dt}\left(t^{c-1}\F{a,b}{c}{t}\right)=(c-1)t^{c-2}\F{a,b}{c-1}{t}. \label{c-1}
\end{align}
We also use the following contiguous relations (cf. loc. cit. (1.4.1), (1.4.3), (1.4.5), (1.4.9), (1.4.13)): 
\begin{align}
&(c-2a+(a-b)t)F+a(1-t)F[a+1]=(c-a)F[a-1],\label{1.4.1}\\
&(c-a-b)F+a(1-t)F[a+1]=(c-b)F[b-1],\label{1.4.3}\\
&(c-a-1)F+aF[a+1]=(c-1)F[c-1],\label{1.4.5}\\
&(a-1+(1+b-c)t)F+(c-a)F[a-1]=(c-1)(1-t)F[c-1],\label{1.4.9}\\
&c(1-t)F+(c-a)tF[c+1]=cF[b-1]\label{1.4.13}.
\end{align}
Here, $F=\F{a,b}{c}{t}$ and the notation $F[a+1]$ for example means $\F{a+1,b}{c}{t}$. 

We are reduced to show:
\begin{equation}\label{DM_n}
t\frac{d}{dt} M_n(t) = M_n(t) \begin{pmatrix}
1-\b & 0\\
0 & 1-\a
\end{pmatrix}
\begin{pmatrix}
-1 & -1\\
(1-t)^{-1} & 1
\end{pmatrix}.\end{equation}
We prove this for each row vector. 
For the first row vector, put
$$(f(t),g(t))=\left(
t^{\b-\a}\F{1-\a,\b}{1-\a+\b}{t}, 
t^{\b-\a}\F{1-\a,\b-1}{1-\a+\b}{t}\right). 
$$
Firstly, consider the case $\a \neq \b$. By \eqref{c-1}, we have
$$t\frac{d}{dt}(f(t),g(t))=\left((\b-\a)t^{\b-\a}\F{1-\a,\b}{-\a+\b}{t},
(\b-\a)t^{\b-\a}\F{1-\a,\b-1}{-\a+\b}{t}\right).$$
Applying \eqref{1.4.9} to $\F{\b,1-\a}{1-\a+\b}{t}$, we obtain
$$t\frac{d}{dt}f(t)=-(1-\b)f(t)+(1-\a)(1-t)^{-1} g(t).$$
Applying \eqref{1.4.5} to $\F{\b-1,1-\a}{1-\a+\b}{t}$, we obtain
$$t\frac{d}{dt}g(t)=-(1-\b)f(t)+(1-\a)g(t).$$
Hence we are done. 
Now consider the case $\a=\b$. Then 
$$(f(t),g(t)) =
\left(\F{1-\a,\a}{1}{t},
\F{1-\a,\a-1}{1}{t}\right).$$
By \eqref{c+1}, we have
$$\frac{d}{dt}(f(t),g(t))=
\left((1-\a)\a\F{2-\a,1+\a}{2}{t}, 
-(1-\a)^2\F{2-\a,\a}{2}{t}\right).$$
Applying \eqref{1.4.13} to $\F{2-\a,1+\a}{1}{t}$, we have
\begin{equation}\label{pf3-1-1}
t\frac{d}{dt}f(t)=\a(1-t)\F{2-\a,1+\a}{1}{t}-\a \F{2-\a,\a}{1}{t}.
\end{equation}
Applying \eqref{1.4.3} to $\F{1-\a,1+\a}{1}{t}$, we have
\begin{equation}
(1-\a)(1-t)\F{2-\a,1+\a}{1}{t}=\F{1-\a,1+\a}{1}{t}-\a f(t).
\end{equation}
Applying \eqref{1.4.1} to $\F{\a,1-\a}{1}{t}$, we have
\begin{equation}\label{pf3-1-0}
\a (1-t) \F{1-\a,1+\a}{1}{t}=(2 \a-1)(1-t)f(t)+(1-\a)g(t).
\end{equation}
Applying \eqref{1.4.3} to $\F{1-\a,\a}{1}{t}$, we have
\begin{equation}\label{pf3-1-4}
(1-t)\F{2-\a,\a}{1}{t}=g(t).
\end{equation}
Combining \eqref{pf3-1-1}--\eqref{pf3-1-4}, we obtain
$$t\frac{d}{dt}f(t)=(1-\a)\left(-f(t)+(1-t)^{-1}g(t)\right).$$
Applying \eqref{1.4.13} to $\F{\a,2-\a}{1}{t}$, we have
\begin{align*}
t\frac{d}{dt}g(t)&=(1-\a)\left(-\F{1-\a,\a}{1}{t}+(1-t)\F{2-\a,\a}{1}{t}\right)
\\&\os{\eqref{pf3-1-0}}{=}(1-\a)(-f(t)+g(t)).
\end{align*}
In both cases $\a \ne \b$ and $\a=\b$, we have proved \eqref{DM_n} for the first row vector. 

For the second row vector, put
$$(u(t),v(t))=\left(\F{\a,1-\b}{1}{1-t}, (1-\b)(1-t)\F{\a,2-\b}{2}{1-t}\right).$$
Then, by \eqref{c+1} and \eqref{c-1}, we have
$$\frac{d}{dt}(u(t),v(t))=-(1-\b)\left(\a\F{\a+1,2-\b}{2}{1-t}, \F{\a,2-\b}{1}{1-t}\right).$$
Applying \eqref{1.4.13} to $\F{\a,2-\b}{1}{1-t}$, we obtain
\begin{equation}\label{pf3-1-5}
t\frac{d}{dt}v(t)=-(1-\b)u(t)+(1-\a)v(t).
\end{equation}
Applying \eqref{1.4.3} to $\F{\a,2-\b}{2}{1-t}$, we have
\begin{equation}\label{pf3-1-6}
t\frac{d}{dt}u(t)=(\b-\a)(1-t)^{-1}v(t)-(1-\b)\b\cdot \F{\a,1-\b}{2}{1-t}.
\end{equation}
Applying \eqref{1.4.9} to $\F{2-\b,\a}{2}{1-t}$, we have
\begin{align}
(1-\b)\b\cdot\F{\a,1-\b}{2}{1-t}
&=\left(-(1-\b)(1-t)^{-1}+1-\a\right)v(t)-t \frac{d}{dt}v(t)\notag
\\&\os{\eqref{pf3-1-5}}=(1-\b)\left(u(t)-(1-t)^{-1}v(t)\right).\label{pf3-1-7}
\end{align}
Combining \eqref{pf3-1-6} and \eqref{pf3-1-7}, we obtain
$$t\frac{d}{dt}u(t)=-(1-\b)u(t)+(1-\a)(1-t)^{-1}v(t).$$
Hence we have proved \eqref{DM_n} for the second row vector. 
\end{proof}

\subsection{Canonical extension} 

Now, we return to the fibration $f \colon X \to \P^1$, and from now on, $t$ denotes the coordinate of the base scheme of $f$. Put 
$$D=\{0,\infty\}\cup\mu_l, \ T=\P^1 \setminus D, \ U=X\times_{\P^1} T, \ \sH=R^1f_*\O^\bullet_{U/T},$$ 
and let $\nb \colon \sH \ra \O_T^1 \ot \sH$ be the Gauss--Manin connection. 
The following is immediate from Proposition \ref{gauss-manin-g}. 

\begin{ppn}\label{gauss-manin} 
For $n=1,\dots, p-1$, the Gauss--Manin connection 
$$\nb \colon \sH^{(n)} \ra \O_T^1 \ot \sH^{(n)}$$
is given by
\begin{align*}
(\nb\o_n,\nb\y_n)
&=l \frac{dt}{t} \ot (\o_n,\y_n)\begin{pmatrix}1-\b&0\\0&1-\a\end{pmatrix}
\begin{pmatrix} -1& -1\\ \frac{1}{1-t^l} & 1 \end{pmatrix}\\
&=l \frac{ds}{s}  \ot (\o_n,\y_n)\begin{pmatrix}1-\b&0\\0&1-\a\end{pmatrix}
\begin{pmatrix} 1& 1\\ \frac{s^l}{1-s^l} & -1 \end{pmatrix}
\end{align*}
where $s=1/t$. 
\end{ppn}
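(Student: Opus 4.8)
The statement to prove is Proposition \ref{gauss-manin}, which asserts that the Gauss--Manin connection for $f\colon X\to\P^1$ is obtained from the one for $g\colon Y\to\P^1$ (Proposition \ref{gauss-manin-g}) by pulling back along $t\mapsto t^l$, and then rewriting in the coordinate $s=1/t$.

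\medskip

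The plan is as follows. First I would recall that $X_t\simeq Y_{t^l}$ by construction (Sect. 2.1), so that $f\colon U\to T$ is the pullback of $g\colon Y_{T'}\to T'$ along the finite \'etale base change $\phi\colon T\to T'$, $\phi(t)=t^l$, where $T'=\P^1\setminus\{0,1,\infty\}$ is the base of $g$ (after restricting; note $\phi^{-1}(\{0,1,\infty\})=\{0,\infty\}\cup\mu_l=D$). Under such a base change, the relative de Rham cohomology sheaf and its Gauss--Manin connection pull back: $\sH\cong\phi^*(\sH')$ compatibly with $\nabla=\phi^*\nabla'$, where the latter means $(\phi^*\nabla')(\phi^*e)=(\mathrm{id}\otimes\phi^*)(\nabla' e)$ on pullback sections, using the functoriality of the Gauss--Manin connection under base change. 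The forms $\o_n,\y_n$ on $X$ are by definition the pullbacks of the corresponding forms on $Y$ (Sect. 2.5), so they are $\phi^*$ of the generating sections of $\sH'^{(n)}$.

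\medskip

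The second step is the concrete substitution. Writing $t'$ for the coordinate on $T'$ and $t$ for that on $T$ with $t'=t^l$, we have $\phi^*(dt'/t')=l\,dt/t$. Applying $\phi^*$ to the formula of Proposition \ref{gauss-manin-g}, i.e. replacing $dt'/t'$ by $l\,dt/t$ and replacing the scalar function $(1-t')^{-1}$ by $(1-t^l)^{-1}$, yields directly
$$(\nb\o_n,\nb\y_n)=l\frac{dt}{t}\ot(\o_n,\y_n)\begin{pmatrix}1-\b&0\\0&1-\a\end{pmatrix}\begin{pmatrix}-1&-1\\ \frac{1}{1-t^l}&1\end{pmatrix},$$
which is the first displayed equality. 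For the second, substitute $t=1/s$, so $dt/t=-ds/s$ and $1/(1-t^l)=1/(1-s^{-l})=-s^l/(1-s^l)$; carrying the sign from $dt/t$ into the matrix (it multiplies the whole right-hand matrix by $-1$) gives
$$l\frac{dt}{t}\begin{pmatrix}-1&-1\\ \frac{1}{1-t^l}&1\end{pmatrix}=l\frac{ds}{s}\begin{pmatrix}1&1\\ -\frac{-s^l/(1-s^l)}{1}\cdot(-1)&-1\end{pmatrix}=l\frac{ds}{s}\begin{pmatrix}1&1\\ \frac{s^l}{1-s^l}&-1\end{pmatrix},$$
where in the lower-left entry the two sign flips (one from $dt/t=-ds/s$, one from $1/(1-t^l)=-s^l/(1-s^l)$) cancel. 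This is the second displayed equality.

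\medskip

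The only genuine point requiring care — and thus the main (though minor) obstacle — is justifying the base-change compatibility of the Gauss--Manin connection, i.e. that $\phi^*$ of the connection matrix is literally the connection matrix of the pulled-back bundle in the pulled-back frame. This is standard (it follows from the construction of $\nabla$ via the Koszul filtration on the de Rham complex of the total space, which is compatible with smooth, or even arbitrary, base change), and since $\o_n,\y_n$ on $X$ are defined as pullbacks of the forms on $Y$, no further identification is needed. One should also note that although $X\to Y^{(l)}$ involves a desingularization, the forms $\o_n,\y_n$ and the connection are computed on the smooth locus $U=X\times_{\P^1}T$ over $T$, where $X\to Y^{(l)}$ is an isomorphism and $Y^{(l)}_T\to Y_{T'}$ is exactly the base change of $g$; so the desingularization plays no role here. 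Everything else is the elementary coordinate computation displayed above, and "the following is immediate" in the statement is accurate.
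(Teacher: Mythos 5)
Your proof is correct and takes exactly the route the paper intends: the paper offers no argument beyond ``immediate from Proposition \ref{gauss-manin-g},'' and your spelling-out of the base-change functoriality of the Gauss--Manin connection under $t\mapsto t^l$, the irrelevance of the desingularization over the smooth locus, and the coordinate change $s=1/t$ supplies precisely the omitted content. (The intermediate expression for the lower-left entry in your final display is garbled as written, but your verbal sign bookkeeping --- the cancellation of the sign from $dt/t=-ds/s$ against that from $1/(1-t^l)=-s^l/(1-s^l)$ --- and the resulting matrix are correct.)
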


Let $j\colon T \to \P^1$ denote the embedding. 
Write $\O^1=\O^1_{\P^1/\ol\Q}$ and let $\O^1(\log D)$ be the sheaf of differentials with logarithmic poles along $D$. 
Then the {\it canonical extension} 
$$\nabla \colon \sH_e \to \O^1(\log D) \ot \sH_e$$
of Deligne \cite[5.1]{deligne-book} is defined to be the unique sub-bundle of $j_*\sH$ satisfying the following properties: 
\begin{enumerate}
\item $\nb(\sH_e) \subset \O^1(\log D) \ot \sH_e$, 
\item For each $t\in D$, all the eigenvalues of $\Res_t(\nabla)$ lie in the interval $[0,1)$, 
where $\Res_t(\nabla)$ denotes the residue at $t$ of the connection matrix. 
\end{enumerate}
In fact, we have $\sH_e = R^1f_*\O_{X/\P^1}^\bullet(\log Z)$ by Steenbrink \cite[(2.18), (2.20)]{steenbrink}. 
This is determined as follows. 

\begin{ppn}\label{can ext}
For $n=1,\dots, p-1$, local bases of $\sH_e^{(n)}$ at $t\in D$ are given as follows: 
\begin{align*}
&\sH_e^{(n)}|_{0}=\begin{cases}
\left\langle \o_n-\y_n, t^{\ceil{(\a-\b)l}}((1-\b)\o_n-(1-\a)\y_n)\right\rangle & \text{if $\a \neq \b$},\\
\langle \o_n, \y_n\rangle & \text{if $\a=\b$}, 
\end{cases}
\\& \sH_e^{(n)}|_{\infty}=
\begin{cases}
\left\langle
t^{\floor{(1-\b)l}} ((1-\a-\b)\o_n+(1-\a)t^{-l}\y_n), t^{\floor{\a l}-l}\y_n)\right\rangle & \text{if $\a+\b \ne 1$}, \\
\left\langle t^{\floor{\a l}}\o_n, t^{\floor{\a l}-l}\y_n \right\rangle & \text{if $\a+\b = 1$},
\end{cases}
\\& \sH_e^{(n)}|_{\z}=\langle \o_n,\y_n\rangle \quad (\z \in \mu_l). 
\end{align*}
The residue matrices with respect to these bases are:
\begin{align*}
&\Res_{0}(\nb)=\begin{cases}
\begin{pmatrix}0&0\\0&\{(\b-\a)l\}\end{pmatrix} & \text{if $\a \neq \b$}, \\
l(1-\a)\begin{pmatrix}-1&-1\\1&1\end{pmatrix} & \text{if $\a=\b$}, 
\end{cases}
\\&\Res_{\infty}(\nb)=
\begin{cases}
\begin{pmatrix} \{(1-\b) l\} & 0 \\ 0 & \{\a l\}\end{pmatrix} & \text{if $\a+\b\ne 1$}, \\
\begin{pmatrix} \{\a l\}& 0 \\ (\a-1)l & \{\a l\} \end{pmatrix}
& \text{if $\a+\b= 1$}, \end{cases}
\\&\Res_{\z}(\nb)=-(1-\a)\begin{pmatrix}0 & 0 \\ 1& 0\end{pmatrix}. 
\end{align*}
\end{ppn}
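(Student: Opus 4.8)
The proof will use only Proposition~\ref{gauss-manin} and the defining property of the canonical extension: by \cite[5.1]{deligne-book}, $\sH_e^{(n)}$ is the \emph{unique} sub-bundle of $j_*\sH^{(n)}$ on which $\nb$ has at worst logarithmic poles along $D$ and for which every eigenvalue of $\Res_t(\nb)$ lies in $[0,1)$ for each $t\in D$. It therefore suffices to exhibit, near each point of $D$, a local frame of $j_*\sH^{(n)}$ with these two properties; such a frame then automatically generates $\sH_e^{(n)}$, and the residue of $\nb$ in it is $\Res_t(\nb)$. Starting from the frame $(\o_n,\y_n)$, in which $\nb$ is given by the explicit formula of Proposition~\ref{gauss-manin}, I would at each point of $D$ apply a constant $GL_2$-change of frame (to put the leading term of the residue in a convenient form) followed by rescaling of the frame vectors by powers of the local parameter --- ``shearing'' --- until the two properties hold, and then invoke uniqueness.

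At a point $\z\in\mu_l$ this is immediate: $dt/t$ is holomorphic there and the only singularity of the connection matrix comes from $(1-t^l)^{-1}$, which has a simple pole; writing $1-t^l=-l\z^{-1}(t-\z)+O((t-\z)^2)$ and reading off the residue from Proposition~\ref{gauss-manin} gives $\Res_\z(\nb)=-(1-\a)\begin{pmatrix}0&0\\1&0\end{pmatrix}$ already in the frame $(\o_n,\y_n)$, a nilpotent matrix, so nothing has to be modified. At $t=0$, $(1-t^l)^{-1}$ is a unit, and Proposition~\ref{gauss-manin} gives $\Res_0(\nb)=l\begin{pmatrix}-(1-\b)&-(1-\b)\\1-\a&1-\a\end{pmatrix}$ in $(\o_n,\y_n)$, with eigenvalues $0$ and $l(\b-\a)$. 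If $\a=\b$ both eigenvalues vanish, $(\o_n,\y_n)$ already works, and the residue is the stated nilpotent matrix $l(1-\a)\begin{pmatrix}-1&-1\\1&1\end{pmatrix}$. If $\a\ne\b$, then $\a,\b\in\tfrac1p\Z\cap(0,1)$ and $\gcd(l,p)=1$ force $l(\b-\a)\notin\Z$, so there is no resonance at $0$; I pass to the eigen-frame $(\o_n-\y_n,\ (1-\b)\o_n-(1-\a)\y_n)$ of the leading residue, then multiply the second vector by $t^{\ceil{(\a-\b)l}}$, which moves its residue-eigenvalue to $l(\b-\a)+\ceil{(\a-\b)l}=\{(\b-\a)l\}\in[0,1)$. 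Using that $(1-t^l)^{-1}-1$ is divisible by $t^l$ and that $\ceil{(\a-\b)l}\le l$, one checks that after this shearing every entry of the connection matrix is still at worst logarithmic; reading off the residue then gives the stated matrix (its eigenvalues are $0$ and $\{(\b-\a)l\}$, and it is the displayed diagonal matrix as soon as $\ceil{(\a-\b)l}<l$).

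At $t=\infty$ I would work in the coordinate $s=1/t$, in which, by Proposition~\ref{gauss-manin}, $s^l/(1-s^l)$ is a unit at $s=0$ and the leading residue $l\begin{pmatrix}1-\b&1-\b\\0&-(1-\a)\end{pmatrix}$ in $(\o_n,\y_n)$ is already triangular, with eigenvalues $l(1-\b)$ and $-l(1-\a)$; their difference $l(2-\a-\b)$ is, again by $\gcd(l,p)=1$, an integer exactly when $\a+\b=1$. If $\a+\b\ne1$ there is no resonance: shearing $\o_n$ and $\y_n$ by $t^{\floor{(1-\b)l}}$ and $t^{\floor{\a l}-l}$ respectively brings their residue-eigenvalues to $\{(1-\b)l\}$ and $\{\a l\}$, both in $[0,1)$, while the correction term proportional to $t^{-l}\y_n$ in the first frame vector is exactly what is needed to cancel the higher-order pole that the $s^l/(1-s^l)$ entry of the connection would otherwise produce. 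If $\a+\b=1$ one may take $\b=1-\a$; the same shearing exponents $\floor{\a l}$ and $\floor{\a l}-l$ now bring both eigenvalues to $\{\a l\}$, but resonance leaves a surviving nilpotent subdiagonal term, giving the stated residue. In each case properties (1) and (2) are verified by a direct application of the Leibniz rule to Proposition~\ref{gauss-manin}, and uniqueness of the canonical extension completes the proof.

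The routine part is the Leibniz-rule computation of $\nb$ on the modified frames; the real work, and the place where errors are easiest, is the bookkeeping in the resonant and near-resonant cases at $0$ and $\infty$. There one must simultaneously (i) identify the correct \emph{rational} frame vectors --- which is why correction terms such as the $t^{-l}\y_n$-term must be inserted rather than using the naive eigenvectors of the leading residue --- and (ii) choose the shearing exponents, the floors and ceilings appearing in the statement, so that \emph{every} entry of the connection matrix remains logarithmic while \emph{all} residue eigenvalues land in $[0,1)$. Keeping exact track of the $t^l$-adic order of $(1-t^l)^{-1}-1$, and of the sign conventions after passing to $s=1/t$, is the main obstacle; everything else is formal.
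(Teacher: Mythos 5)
Your proposal is correct and follows essentially the same route as the paper: the paper's proof consists precisely of exhibiting the gauge matrices $P$ (a constant change of frame adapted to the leading residue, followed by diagonal rescaling by powers of the local parameter) and invoking the uniqueness characterization of the canonical extension, leaving the Leibniz-rule verification as ``one verifies.'' Your caveat at $t=0$ is well taken: when $\ceil{(\a-\b)l}=l$ (which requires $p>2l$) the sheared $(2,1)$ entry $t^{-l}\cdot O(t^{l})\,dt/t$ does contribute a nonzero off-diagonal residue, so the displayed matrix should then be read as lower-triangular with the same eigenvalues $0$ and $\{(\b-\a)l\}$ --- which is all that the canonical-extension condition and the subsequent computation of $N_0^{(n)}$ actually use.
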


\begin{proof}
Let $A$ be the matrix of connection from Proposition \ref{gauss-manin}. 
For each $t\in D$, we shall find a matrix $P$ with coefficients in local sections of $j_*\sO_U$, such that 
$(\o_n,\y_n)P$ is a local basis of $\sH_e$ at $t$. 
The connection matrix with respect to this basis is given by the Gauge transformation
$$A_P:=P^{-1}AP+P^{-1}P'$$
where $P'=\frac{d}{dt}P$. 
For $t=0$, we let
$$P=\begin{pmatrix}1 & 1-\b \\ -1 & -(1-\a)\end{pmatrix}
\begin{pmatrix}1&0\\0&t^{\ceil{(\a-\b)l}}\end{pmatrix}$$
if $\a \neq \b$, and $P=I$ (the unit matrix) if $\a =\b$. 
For $t=\z \in \mu_l$, we let $P=I$. 
Finally for $t=\infty$, we let 
$$P=\begin{pmatrix}1&0\\0&t^{-l}\end{pmatrix}
\begin{pmatrix}1-\a-\b&0\\1-\a&1\end{pmatrix}
\begin{pmatrix}t^{\floor{(1-\b)l}}&0\\0& t^{\floor{\a l}}\end{pmatrix}$$
if $\a+\b\ne 1$, and 
$$P=\begin{pmatrix}t^{\floor{\a l}}&0\\0& t^{\floor{\a l}-l}\end{pmatrix}$$
if $\a+\b=1$. 
Then, one verifies that $A_P$ satisfies the desired properties and its residue is given as stated.  
\end{proof}

To see the Hodge filtration, we rewrite the above bases as follows. 
\begin{cor}\label{local hodge fil}
Let $n=1,\dots, p-1$. 
\begin{align*}
&\sH_e^{(n)}|_{t=0}=\begin{cases}
\left\langle \o_n, t^{-\lfloor(\b-\a)l\rfloor}((1-\b)\o_n-(1-\a)\y_n)\right\rangle& \text{if $\a\le\b$},\\
\left\langle t^{\lceil(\a-\b)l\rceil}\o_n, \o_n-\y_n\right\rangle & \text{if $\a>\b$}.
\end{cases}
\\&\sH_e^{(n)}|_{t=\infty}=
\begin{cases}
\left\langle{
t^{\floor{(1-\b)l}}\o_n, t^{\floor{\a l}-l}\y_n)}
\right\rangle &\text{if $\floor{\a l} \ge \floor{(1-\b) l}$}, \\
\left\langle
t^{\lfloor \a l \rfloor}\o_n, 
t^{\floor{(1-\b)l}}((1-\a-\b)\o_n+(1-\a)t^{-l}\y_n)
\right\rangle &\text{if $\floor{\a l}<\floor{(1-\b) l}$}. 
\end{cases}
\\& \sH_e^{(n)}|_{t=\z}=\langle \o_n,\y_n\rangle \quad (\z \in \mu_l). 
\end{align*}
\end{cor}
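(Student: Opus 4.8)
The statement of Corollary \ref{local hodge fil} is purely a restatement of the bases given in Proposition \ref{can ext}: I already have local bases of $\sH_e^{(n)}$ at each point of $D$, and the goal is to rewrite them so that the first generator spans $F^1\sH_e^{(n)}$ (the holomorphic part, extending $\langle\o_n\rangle$). So the proof is an elementary linear-algebra manipulation of the $\ol\Q[t,t^{-1}]$-bases near $t=0$ and $t=\infty$, with nothing to do at $t=\z\in\mu_l$ since there the basis is already $\langle\o_n,\y_n\rangle$. The key point to keep in mind is that two tuples of local sections generate the same $\sO_{\P^1,t}$-submodule of $j_*\sH$ if and only if they differ by a change of basis matrix that, together with its inverse, has entries regular at the point in question; so every step must be checked to be invertible over the local ring.

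\emph{First, the point $t=0$.} When $\a>\b$ the quantity $(\a-\b)l$ may fail to be an integer, so $\ceil{(\a-\b)l}=-\floor{(\b-\a)l}$ and the two displayed cases ($\a\le\b$ versus $\a>\b$) must be handled separately. Starting from $\langle\o_n-\y_n,\ t^{\ceil{(\a-\b)l}}((1-\b)\o_n-(1-\a)\y_n)\rangle$ of Proposition \ref{can ext}, I take suitable $\ol\Q$-linear combinations: $(1-\a)(\o_n-\y_n)$ minus the normalized second generator produces a multiple of $\o_n$ (the $\y_n$-terms cancel because $(1-\a)\cdot 1 - (1-\a)=0$ in the relevant coordinates — more precisely one solves the $2\times2$ system so that the $\y_n$ coefficient vanishes), yielding $t^{\ceil{(\a-\b)l}}\o_n$ in the case $\a>\b$ and $\o_n$ in the case $\a\le\b$; pairing this with $\o_n-\y_n$ (if $\a>\b$) or with $t^{-\floor{(\b-\a)l}}((1-\b)\o_n-(1-\a)\y_n)$ (if $\a\le\b$) and checking that the transition matrix and its inverse have regular entries at $t=0$ gives the claim. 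The case $\a=\b$ needs no rewriting, and indeed $\langle\o_n,\y_n\rangle$ already has $\o_n$ first, consistent with both displayed formulas (note $\ceil{(\a-\b)l}=0=-\floor{(\b-\a)l}$ there). A symmetric computation at $t=\infty$: from $\langle t^{\floor{(1-\b)l}}((1-\a-\b)\o_n+(1-\a)t^{-l}\y_n),\ t^{\floor{\a l}-l}\y_n\rangle$, subtract the appropriate $t$-power multiple of the second generator from the first to clear the $\y_n$-term, producing $t^{\floor{(1-\b)l}}(1-\a-\b)\o_n$; then the two subcases $\floor{\a l}\ge\floor{(1-\b)l}$ and $\floor{\a l}<\floor{(1-\b)l}$ determine which of the two generators can be taken with the smaller power of $t$ without destroying regularity of the inverse transition matrix. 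The $\a+\b=1$ case of Proposition \ref{can ext} is already in the desired form and falls under $\floor{\a l}\ge\floor{(1-\b)l}$ (as $\floor{\a l}=\floor{(1-\b)l}$).

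\emph{The only genuine obstacle} is bookkeeping with the floor and ceiling functions and the case divisions they force: one must verify in each branch that the chosen pair of generators spans exactly $\sH_e^{(n)}$ and not a proper sub- or super-module, i.e. that neither the transition matrix from the Proposition \ref{can ext} basis nor its inverse acquires a pole or a zero at $t=0$, resp. $t=\infty$. Concretely this reduces to checking that a single integer exponent has the right sign in each case — e.g. $\ceil{(\a-\b)l}\ge 0$ when $\a>\b$ and $-\floor{(\b-\a)l}\ge 0$ when $\a\le\b$, and $\floor{\a l}-\floor{(1-\b)l}$ has the sign dictated by the subcase — together with the elementary identity $\ceil{x}=-\floor{-x}$. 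None of this is deep, but it is the part where a sign error would be easy to make, so I would carry it out branch by branch rather than trying to write a uniform formula. The Hodge-filtration interpretation ($F^1$ is spanned by the first listed generator) is then read off immediately, since $\o_n$ generates $F^1\sH^{(n)}$ over $T$ and $\sH_e$ carries the filtration by saturation.
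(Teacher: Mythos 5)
Your proposal is correct and is essentially the paper's own (implicit) argument: the paper offers no proof, presenting the corollary as an immediate rewriting of the bases of Proposition \ref{can ext} by exactly the unit-determinant, locally regular changes of basis you describe. Your branch-by-branch verification of the transition matrices (including the boundary cases $\a=\b$ and $\a+\b=1$) is the right bookkeeping and checks out.
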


Write $\sO=\sO_{\P^1}$ and define $F^1\sH_e=\sH_e \cap j_*(F^1\sH)$. 
Then, we have immediately: 
\begin{cor}\label{can ext fil}
Let $n=1,\dots, p-1$. 
\begin{enumerate}
\item We have $F^1\sH^{(n)}_e=\sO(i)t^j\o_n$ with
\begin{align*}
(i,j)=
\begin{cases}
(\floor{(1-\b)l},0) & \text{if $\floor{\a l} \ge \floor{(1-\b) l}$, $\a\le\b$},\\
(\floor{(1-\b)l}-\ceil{(\a-\b)l}, \ceil{(\a-\b)l}) & \text{if $\floor{\a l} \ge \floor{(1-\b) l}$, $\a>\b$},\\
(\floor{\a l},0) & \text{if $\floor{\a l}<\floor{(1-\b) l}$, $\a\le\b$},\\
(\floor{\a l}-\ceil{(\a-\b)l}, \ceil{(\a-\b)l}) & \text{if $\floor{\a l}<\floor{(1-\b) l}$, $\a>\b$}.
\end{cases}
\end{align*}
\item
According as the four cases as above, we have 
$$\Gr_F^0\sH_e^{(n)}=
\begin{cases}
\sO(-\ceil{(1-\a)l}+\floor{(\b-\a)l}) t^{-\lfloor(\b-\a)l\rfloor}((1-\b)\o_n-(1-\a)\y_n),\\
\sO(-\ceil{(1-\a)l})(\o_n-\y_n),\\
\sO(\floor{(\b-\a)l}-\ceil{\b l}) 
t^{-\floor{(\b-\a)l}}\left((1-\a-\b)t^l \o_n-(1-\b)\o_n+(1-\a) \y_n\right),\\
\sO(-\ceil{\b l})\left((1-\a-\b)t^l \o_n-(1-\a) (\o_n-\y_n)\right).
\end{cases}$$
Here, by abuse of notation, the images of $\o_n$, $\y_n$ in $\Gr_F^1\sH^{(n)}_e$ are written by the same letters. 
\end{enumerate}
\end{cor}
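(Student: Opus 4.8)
The plan is to derive both statements of Corollary~\ref{can ext fil} purely as bookkeeping consequences of Corollary~\ref{local hodge fil}, together with the fact that a line subbundle of $j_*\sH^{(n)}$ over $\P^1$ that agrees with a prescribed local generator at each point of $D$ is determined up to the choice of the twisting integers $(i,j)$, which are forced by the local data. First I would recall that $F^1\sH^{(n)} \subset \sH^{(n)}$ is the rank-one subbundle generated by $\o_n$ over $T$, so $F^1\sH^{(n)}_e := \sH^{(n)}_e \cap j_*F^1\sH^{(n)}$ is a line bundle on $\P^1$ of the form $\sO(i)\,t^j\o_n$ for integers $i,j$ to be pinned down; over $\mu_l$ there is nothing to twist since the local basis is $\langle\o_n,\y_n\rangle$, so only the points $0$ and $\infty$ contribute. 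At $t=0$: in the case $\a\le\b$ Corollary~\ref{local hodge fil} gives $\o_n$ itself as a local generator of $\sH^{(n)}_e|_{t=0}$, hence $F^1$ is generated near $0$ by $\o_n$ with no pole or zero; in the case $\a>\b$ the local generator involving $\o_n$ is $t^{\ceil{(\a-\b)l}}\o_n$, which is the $F^1$-part, so one must twist by $t^{\ceil{(\a-\b)l}}$ there. At $t=\infty$, reading off from the two cases of Corollary~\ref{local hodge fil} the local generator proportional to $\o_n$, one finds the exponent is $\floor{(1-\b)l}$ when $\floor{\a l}\ge\floor{(1-\b)l}$ and $\floor{\a l}$ otherwise. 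Combining the contributions at $0$ and $\infty$ and recording $\deg\sO(i)$ as (exponent at $\infty$) minus (exponent at $0$) yields exactly the four listed values of $(i,j)$; this is the content of part~(i).

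For part~(ii), the object $\Gr^0_F\sH^{(n)}_e = \sH^{(n)}_e / F^1\sH^{(n)}_e$ is again a line bundle on $\P^1$, and I would compute it by picking, in each of the four cases and at each point of $D$, the second element of the local basis in Corollary~\ref{local hodge fil} and reading its class modulo $F^1$. At a point $\z\in\mu_l$ the quotient is generated by the class of $\y_n$, contributing no twist. At $t=0$: when $\a\le\b$ the complementary generator is $t^{-\floor{(\b-\a)l}}((1-\b)\o_n-(1-\a)\y_n)$, and when $\a>\b$ it is $\o_n-\y_n$ with no twist. At $t=\infty$: when $\floor{\a l}\ge\floor{(1-\b)l}$ the complementary generator is $t^{\floor{\a l}-l}\y_n$, and otherwise it is $t^{\floor{(1-\b)l}}((1-\a-\b)\o_n+(1-\a)t^{-l}\y_n)$. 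The main care needed is to normalize these four local generators so that they glue to a single global rational section: one multiplies through by appropriate powers of $t$ so that the expression is the same rational $1$-form in each chart (the displayed answers show this — e.g.\ $(1-\a-\b)t^l\o_n-(1-\b)\o_n+(1-\a)\y_n$ is the common expression in the third case), and then the degree of the resulting $\sO(\,\cdot\,)$ is again (order of vanishing forced at $\infty$) minus (order of pole forced at $0$), giving the stated $-\ceil{(1-\a)l}+\floor{(\b-\a)l}$, $-\ceil{(1-\a)l}$, $\floor{(\b-\a)l}-\ceil{\b l}$, $-\ceil{\b l}$ respectively.

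The one genuinely delicate point — the rest is arithmetic with floors and ceilings — is verifying in each case that the chosen local complementary generator at $0$ and the one at $\infty$ really do represent the same global section of $\Gr^0_F\sH^{(n)}_e$ up to the twisting bundle, i.e.\ that the identities $\ceil{x}=-\floor{-x}$, $\floor{x}+\floor{-x}=-1$ for $x\notin\Z$, and $\{x\}+\{-x\}=1$ are applied consistently to reconcile the exponents coming from the $\infty$-chart (written in $s=1/t$) with those from the $0$-chart. In particular one must track how the factor $t^{-l}$ inside the $\infty$-basis interacts with the overall $t^{\floor{(1-\b)l}}$ twist, and confirm that after clearing denominators the coefficient vector $((1-\a-\b)t^l, -(1-\b), (1-\a))$ (resp.\ its analogue) is regular and nonvanishing at $0$, so that the quotient line bundle has exactly the claimed degree. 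Once this matching is checked in one representative case, the other three are obtained by the same manipulation, and both corollaries follow immediately from Corollary~\ref{local hodge fil} with no further input.
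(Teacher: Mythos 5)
Your proposal is correct and is essentially the argument the paper intends: the corollary is stated as an immediate consequence of Corollary \ref{local hodge fil}, and your bookkeeping (reading off the $\o_n$-multiples for $F^1$ and the complementary generators for $\Gr_F^0$ at $0$, $\infty$, $\mu_l$, then summing the local exponents to get the degree, using $\ceil{(1-\a)l}=l-\floor{\a l}$ and $\floor{(1-\b)l}=l-\ceil{\b l}$ to reconcile the charts) is exactly the verification required. The one point worth stating explicitly is why $F^1\sH_e^{(n)}=\sH_e^{(n)}\cap j_*F^1\sH^{(n)}$ is locally generated by the pure $\o_n$-term of each local basis, namely that a local section lies in $F^1$ iff its coefficient on the second basis vector vanishes, since that vector has nonzero $\y_n$-component.
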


\begin{cor}\label{totdeg}
For each $\z \in \mu_l$, $X_\z$ is a normal crossing divisor in $X$ with rational irreducible components.  
\end{cor}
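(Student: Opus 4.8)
The plan is to reduce the statement to the local geometry of the fibration near a single point $\z\in\mu_l$ and there to resolve the cyclic cover by hand. The key initial remark is that the base change $t\mapsto t^l$ does no harm near $\z$: since $\z\ne 0$, the derivative $l\z^{l-1}$ of $t\mapsto t^l$ at $\z$ is nonzero, so this map identifies a neighborhood of $\z$ with a neighborhood of $\z^l=1$. Hence $Y^{(l)}\to\P^1$ over a neighborhood of $\z$ is isomorphic, as a family of curves, to $g\colon Y\to\P^1$ over a neighborhood of $t=1$; the blow-ups producing $X$ restrict there to a resolution of $Y$ near $Y_1$, and $X_\z$ is isomorphic to the fiber over $t=1$ of such a resolution. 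So it suffices to show that some resolution of $Y$ near $Y_1$ has reduced normal crossing fiber over $t=1$ with rational components --- that is, to justify the assertion of \S2.1 that $Y_1$ is a union of $\P^1$'s meeting transversally.

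Next I would describe that fiber explicitly. Setting $t=1$ and using $b+c=p$, the defining equation becomes $y^p=x^a(1-x)^b(1-x)^c=x^a(1-x)^p$, so with $w=y/(1-x)$ one has $w^p=x^a$, an irreducible rational curve since $\gcd(a,p)=1$. This curve fails to be normal only at $(x,y)=(1,0)$, where it has $p$ smooth branches with pairwise distinct tangent lines, and that point lies over $(x,t)=(1,1)$, the unique point where the branch components $\{x=1\}$ and $\{x=t\}$ of the cyclic cover collide. In local coordinates $u=x-1$, $v=t-x$, in which $\{u=0\}$ and $\{v=0\}$ carry multiplicities $b$ and $c$ with $b+c=p$ and $\gcd(b,c)=\gcd(b,p)=1$, the cover is locally $y^p=(\text{unit})\cdot u^b v^c$; everywhere else over $t=1$ the model is already smooth.

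Then I would resolve. After normalization this cyclic cover has a toric surface singularity (of cyclic quotient type), in particular a rational one, resolved by an iterated sequence of point blow-ups whose exceptional divisor is a tree of smooth rational curves meeting transversally; since the center is invariant under $\s$ and $\tau$, the resolution may be chosen equivariantly, hence compatible with a desingularization $X$ as in \S2.1. The $p$ branches of the fiber at $(1,0)$, having distinct tangents, are separated by a single blow-up, so the strict transform of $w^p=x^a$ becomes a smooth $\P^1$; together with the exceptional $\P^1$'s, and after finitely many further blow-ups at any triple point or tangency (each strictly improving the configuration), the total transform of the $t=1$ fiber is a reduced normal crossing divisor whose components are all rational. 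By the first step, $X_\z$ has the same form, which is the assertion. As a cross-check tying this to the preceding computation, Proposition \ref{can ext} shows $\Res_\z(\nb)$ is nilpotent, so the local monodromy around $\z$ is unipotent --- exactly what reducedness of $X_\z$ requires.

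The main obstacle is this last step. One must write down the Hirzebruch--Jung continued-fraction resolution of $y^p=u^b v^c$ explicitly, verify that its exceptional tree meets the strict transform of $w^p=x^a$ transversally and that no non-normal-crossing point or irrational component survives, and make sure that the desingularization $X$ of \S2.1 can actually be taken to be a log resolution near $\z$, not merely one that smooths the total space. Everything else is formal or a routine application of the \'etale-local isomorphism at the start.
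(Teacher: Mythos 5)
Your strategy is genuinely different from the paper's: you resolve the singularity by hand, whereas the paper never touches the local geometry. Its proof observes that by Proposition \ref{can ext} the residue $\Res_\z(\nb)$ is nilpotent, so the local monodromy on $H^1(X_t,\Q)$ at $t=\z$ is unipotent, and then quotes T.~Saito's theorem (\cite[Theorem 1]{takeshi}) to conclude that $X_\z$ is a normal crossing divisor; rationality of the components is deduced Hodge-theoretically rather than geometrically, via the Clemens--Schmid sequence: $H^1(X_\z,\Q)=\Ker N$, and since $\rank N=\tfrac12\dim H^1(X_t,\Q)$ (again by Proposition \ref{can ext}), $H^1(X_\z)$ is forced to be pure of weight $0$, i.e.\ every irreducible component has genus $0$. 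Your route, if completed, would be more explicit and would exhibit the dual graph of $X_\z$; but it replaces two citations by a computation, and that computation is precisely what you have not done.

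Concretely, there are two gaps. First, the Hirzebruch--Jung resolution of $y^p=(\mathrm{unit})\,u^bv^c$ and the verification that the \emph{total transform} of $\{u+v=0\}$ (not merely the exceptional chain) is a normal crossing divisor with rational components is the entire content of the statement, and it is not a formality: you must track how the $p$ branches of the strict transform attach to the exceptional chain, rule out triple points and tangencies, and check reducedness of the fiber --- the very facts the paper extracts for free from unipotency of the monodromy. Your preliminary steps (\'etaleness of $t\mapsto t^l$ at $\z$, the identification of the normalization of $Y_1$ with the rational curve $w^p=x^a$, and the local model $y^p=(\mathrm{unit})\,u^bv^c$) are correct, but they only set the stage. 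Second, the corollary concerns the $X$ already fixed in Sect.~2.1, which is an \emph{arbitrary} equivariant desingularization of $Y^{(l)}$, not one you get to choose; showing that $X$ ``can be taken to be'' a log resolution near $\z$ proves the statement for one model only. To close this you would need to propagate normal-crossing-ness to the given $X$, e.g.\ by noting that every regular model of a curve of genus $\ge 1$ is obtained from the relatively minimal one by blow-ups at points, which preserve normal crossing fibers --- an argument the paper's monodromy criterion sidesteps entirely. Your closing cross-check via the nilpotency of $\Res_\z(\nb)$ is consistent but carries no logical weight in your proof.
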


\begin{proof}By Proposition \ref{can ext}, the local monodromy of $H^1(X_t,\Q)$ at $t=\z$ is unipotent, 
hence $X_\z$ is normal crossing (see \cite[Theorem 1]{takeshi}). 
By the Clemens--Schmid exact sequence (see \cite{morrison}), $H^1(X_\z,\Q)$ is the kernel of 
the log local monodromy $N \colon H^1(X_t,\Q) \to H^1(X_t,\Q)$. 
Since $\rank N = \frac{1}{2} \dim H^1(X_t,\Q)$ by Proposition \ref{can ext}, $H^1(X_\z)$ is of pure weight $0$.  
Hence all the irreducible components are rational. 
\end{proof}

\section{Hodge Numbers}

In this section, we determine the Hodge numbers of the eigen-components of our $H$ and prove that it has CM by $K$, i.e. $\dim_K H_B=1$. 

\subsection{Localization sequence}

Let the notations be as in Sect. 3.2 and put $Z=X\setminus U$. 
We have the localization sequence
$$H^2_Z(X) \to H^2(X) \to H^2(U) \to H^3_Z(X) \to H^3(X)$$
both for the de Rham and Betti cohomologies. Let $\angle Z$ denote the image of the first map. Recall that we defined in Sect. 2.2 the de Rham--Hodge structure 
$$H=H^2(X)/\angle Z \ot_R K.$$

\begin{ppn}\label{H^1=0}
$H^1(X)=H^3(X)=0$. 
\end{ppn}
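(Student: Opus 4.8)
The plan is to compute $H^1(X)$ and $H^3(X)$ by relating them to the cohomology of the smooth fibration $g\colon Y\to\P^1$, which is elementary, and then using the fact that $X$ is obtained from $Y^{(l)}$ by base change and desingularization of fibers over a finite set of points. First I would handle $H^1$. By the Leray spectral sequence for $f\colon X\to\P^1$ (or $g\colon Y\to\P^1$), $H^1(X)$ sits in an exact sequence involving $H^1(\P^1, R^0f_*\Q)=H^1(\P^1,\Q)=0$ and $H^0(\P^1, R^1f_*\Q)$, the global invariants of the local system $\sH$ under monodromy. Since the generic fiber is a smooth projective curve with a free $\mu_p$-action away from a few points, $H^1$ of the generic fiber decomposes into eigenspaces $H^1(Y_t,\C)^{(n)}$, and the monodromy representation on each $\sH^{(n)}$ is, by Proposition \ref{gauss-manin} and the explicit connection matrix, a rank-two system whose local monodromies (computed from the residues in Proposition \ref{can ext}) have no common nonzero invariant vector. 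More simply, blow-ups do not change $H^1$, so $H^1(X)=H^1(Y^{(l)})$, and $Y^{(l)}$ is a curve-fibration over $\P^1$ with a fiber which is a tree of rational curves (the fiber over a point of $\mu_l$ maps to $Y_1$, a union of $\P^1$'s meeting transversally); combined with $H^1(\P^1)=0$ and the vanishing of global monodromy invariants this gives $H^1(X)=0$.

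For $H^3(X)$, the cleanest route is duality. Since $X$ is a smooth projective surface over $\ol\Q$, Poincar\'e duality gives $H^3(X)\cong H^1(X)^\vee(-2)$, so $H^3(X)=0$ follows immediately from $H^1(X)=0$. Alternatively, via the Leray spectral sequence $H^3(X)$ is an extension of $H^1(\P^1, R^2f_*\Q)$ by $H^2(\P^1, R^1f_*\Q)$ (the term $H^0(\P^1,R^3f_*\Q)$ vanishes since fibers are curves and $H^1(\P^1,\cdot)$ would also need to vanish); $R^2f_*\Q$ is supported on $\P^1$ with stalks $H^2$ of the fibers, and the edge maps together with $\dim_\Q H^2(\P^1,R^1f_*\Q)=0$ (this Betti number vanishes for a local system on a curve with nontrivial local monodromy at some puncture, or one can use that $R^1f_*\Q$ has no trivial quotient by the same residue computation) force $H^3(X)=0$. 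I would present the duality argument as the main line and mention Leray as a remark.

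The main obstacle is making precise the claim that the monodromy local system $\sH=\bigoplus_n\sH^{(n)}$ has no nonzero global sections (equivalently $H^0(\P^1,j_*\sH)=0$), since this is what is really being used to kill $H^1$. The cleanest way to see it: if there were a nonzero horizontal section, it would lie in some $\sH^{(n)}$ (the decomposition is defined over $\Q(\mu_p)$ and Galois-stable), and by Proposition \ref{gauss-manin} a horizontal section of $\sH^{(n)}$ would force a rank-one horizontal sub-bundle, contradicting the explicit connection matrix $l\frac{dt}{t}\otimes\begin{pmatrix}1-\b&0\\0&1-\a\end{pmatrix}\begin{pmatrix}-1&-1\\(1-t^l)^{-1}&1\end{pmatrix}$, which has no horizontal section because $1-\a,1-\b$ are nonzero and the off-diagonal $(1-t^l)^{-1}$ is non-constant with a pole. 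Concretely, since the residue matrix at some $\z\in\mu_l$ is $-(1-\a)\left(\begin{smallmatrix}0&0\\1&0\end{smallmatrix}\right)\ne 0$, the local monodromy there is a nontrivial unipotent, so no nonzero vector is invariant under all monodromies; hence $H^0(\P^1,j_*\sH^{(n)})=0$ for every $n$. Once this is in place, the Leray/Poincar\'e argument closes everything.

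\begin{proof}
Since $X$ is a smooth projective surface over $\ol\Q$, Poincar\'e duality gives $H^3(X)\cong H^1(X)^\vee(-2)$, so it suffices to prove $H^1(X)=0$. As $X$ is obtained from $Y^{(l)}$ by a sequence of blow-ups at points, $H^1(X)=H^1(Y^{(l)})$, and $Y^{(l)}$ is a fibration of curves over $\P^1$ whose fiber over each $\z\in\mu_l$ maps finitely to $Y_1$, a union of projective lines meeting transversally; in particular those fibers have $H^1$ of weight $0$. The Leray spectral sequence for $f\colon X\to\P^1$ yields an exact sequence
$$0\to H^1(\P^1,R^0f_*\Q)\to H^1(X,\Q)\to H^0(\P^1,R^1f_*\Q)\to H^2(\P^1,R^0f_*\Q).$$
The first term is $H^1(\P^1,\Q)=0$, so $H^1(X,\Q)\hookrightarrow H^0(\P^1,R^1f_*\Q)=H^0(\P^1,j_*\sH)$, the monodromy invariants of the local system $\sH$ on $T=\P^1\setminus D$. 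The decomposition $\sH=\bigoplus_{n=1}^{p-1}\sH^{(n)}$ of Sect. 3 is preserved by monodromy, so it is enough to show $H^0(\P^1,j_*\sH^{(n)})=0$ for each $n$. By Proposition \ref{can ext}, the residue of $\nabla$ at a point $\z\in\mu_l$ equals $-(1-\a)\left(\begin{smallmatrix}0&0\\1&0\end{smallmatrix}\right)$, which is nonzero since $0<\a<1$; hence the local monodromy of $\sH^{(n)}$ at $\z$ is a nontrivial unipotent matrix, and no nonzero vector of the rank-two stalk is fixed by it. Therefore $H^0(\P^1,j_*\sH^{(n)})=0$ for all $n$, so $H^1(X,\Q)=0$ and consequently $H^3(X,\Q)=0$; the de Rham statements follow by comparison.
\end{proof}
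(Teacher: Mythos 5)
There is a genuine error in the key step. You reduce to showing $H^0(\P^1,j_*\sH^{(n)})=0$, i.e.\ that the rank-two local system $\sH^{(n)}$ has no nonzero monodromy-invariant vector, and you try to get this from the local monodromy at a point $\z\in\mu_l$, asserting that since that monodromy is a \emph{nontrivial unipotent}, ``no nonzero vector of the rank-two stalk is fixed by it.'' That is false: a nontrivial unipotent operator on a two-dimensional space always has a one-dimensional fixed subspace (its only eigenvalue is $1$). So the residue $\Res_\z(\nb)=-(1-\a)\begin{pmatrix}0&0\\1&0\end{pmatrix}$ only tells you the invariants of the local monodromy at $\z$ form a line, and your argument does not close. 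The correct point to use is $t=\infty$: by Proposition \ref{can ext}, the eigenvalues of $\Res_\infty(\nb)$ are $\{\a l\}$ and $\{(1-\b)l\}$, which are \emph{nonzero} because $\a,1-\b\in\frac{1}{p}\Z\setminus\Z$ and $p\ne l$; hence the local monodromy at $\infty$ has no eigenvalue $1$ and therefore no nonzero invariant vector on any $\sH^{(n)}$ with $n\ne 0$. This single point already kills $H^0$ of the local system (this is exactly how the paper argues).

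A secondary, reparable issue: you run Leray for $f\colon X\to\P^1$ over all of $\P^1$ and silently identify $H^0(\P^1,R^1f_*\Q)$ with the monodromy invariants $H^0(T,R^1f_*\Q|_T)$. The sheaf $R^1f_*\Q$ is only constructible, with stalks $H^1(X_t,\Q)$ at $t\in D$, so this identification requires the injectivity of the specialization maps $H^1(X_t,\Q)\to H^1(X_s,\Q)^{T_t}$ for $t\in D$ (true for curves by a weight or Clemens--Schmid argument, but it needs saying). The paper sidesteps this entirely by first embedding $H^1(X,\Q)$ into $W_1H^1(U,\Q)$, applying Leray to the smooth part $U\to T$, and then using that $H^1(T,\Q)$ has weight $2$; you may want to adopt that route. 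Your treatment of $H^3$ via Poincar\'e duality agrees with the paper and is fine.
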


\begin{proof}
By Poincar\'e duality, it suffices to show $H^1(X,\Q)=0$. 
Since $H^1(X,\Q) \hookrightarrow W_1H^1(U,\Q)$, where $W_\bullet$ denotes the weight filtration, it suffices to show the vanishing of the latter. 
By the Leray spectral sequence, we have an exact sequence
$$0 \to H^1(T,\Q) \to H^1(U,\Q) \to H^0(T,R^1f_*\Q) \to 0.$$
By  the computation of $\Res_\infty(\nabla)$ in Proposition \ref{can ext}, 
for $n=1,\dots, p-1$, 
the local monodromy around $t=\infty$ of $H^1(X_t,\C)^{(n)}$ does not have $1$ as an eigenvalue. Hence we have $H^0(T,R^1f_*\Q)=0$  (recall that $H^1(X_t,\C)^{(0)}=0$). 
Since $H^1(T,\Q)$ is of weight $2$, we have $W_1H^1(U,\Q)=0$. 
\end{proof}

As a result, we have an exact sequence on the de Rham side (see \cite{hartshorne})
$$0 \to H^2_\dR(X)/\angle Z \to H^2_\dR(U) \os{\pd}\to H_1^\dR(Z) \to 0.$$
The middle term is described by the canonical extension as follows. 
The Leray spectral sequence yields an exact sequence
$$0 \ra H^1(T,\sH) \ra H_\dR^2(U) \ra H^0(T,R^2f_{*}\O_{U/T}^\bullet) \ra 0.$$
Since $\s^*$ acts on $R^2f_*\O^\bullet_{U/T}$ trivially, we have 
$H^1(T,\sH^{(n)}) \simeq H_\dR^2(U)^{(n)}$ for $n=1,\dots, p-1$. 
Put a complex of sheaves on $\P^1$ as
$$\sE=[\sH_e \os{\nb}{\ra} \O^1(\log D) \ot \sH_e].$$
Then, the map of complexes
$$\xymatrix{
\sH_e \ar[r] \ar[d] & \O^1(\log D) \ot \sH_e \ar[d]\\
j_*\sH  \ar[r] & j_*(\O^1_T\ot \sH)
}$$
induces an isomorphism
$$H^1(\P^1,\sE) \simeq H^1(T,\sH),$$
and the first group carries a mixed Hodge structure whose Hodge filtration is given as follows (see \cite{s-z}): 
\begin{equation}\label{fil}
\begin{split}
&F^0H^1(\P^1,\sE) =H^1(\P^1,\sE),\\
&F^1H^1(\P^1,\sE)=H^1(\P^1,F^1\sH_e \to \O^1(\log D)\ot \sH_e) ,\\
&F^2H^1(\P^1,\sE)=H^0(\P^1,\O^1(\log D)\ot F^1\sH_e). 
\end{split}\end{equation}
It follows: 
\begin{equation}\label{gr}
\begin{split}
&\Gr^0_FH^1(\P^1,\sE)=H^1(\P^1,\Gr_F^0\sH_e), 
\\&\Gr^1_FH^1(\P^1,\sE)
=\Coker\left(H^0(\P^1,F^1\sH_e)\os{\ol\nb}\ra H^0(\P^1,\O^1(\log D)\ot \Gr^0_F\sH_e)\right)
\end{split}
\end{equation}
where $\ol\nb$ is the map induced from the composition of $\nb$ and the projection $\sH_e \ra \Gr_F^0\sH_e$.  

\subsection{Residues}

For each $t\in D$, let
$$\pd_t \colon H_\dR^2(U)\to H_1^\dR(X_t)$$
be the $t$-component of the coboundary map $\pd$. 
Let $N_{t} \subset \sH_{e,t}$ be the image of the composite
$$\G(U_t,\sH_e) \os\nb\ra \G(U_t,\O^1(\log t) \ot \sH_e) \os{\Res_t}\ra \sH_{e,t}$$
where $U_t$ is a small open neighborhood of $t$. 
Then, it is not difficult to show that the diagram
$$\xymatrix{
H^1(\P^1,\sE) \ar[r]^\subset \ar[d]^{\Res_t} &  H^2_\dR(U) \ar[d]^{\pd_t} \\
\sH_{e,t}/N_{t} \ar[r]^\simeq & H_1^\dR(X_t) 
}$$
commutes where the lower map is an isomorphism. 
The following is immediate from Proposition \ref{can ext}. 
\begin{ppn}\label{N}
For $n=1,\dots, p-1$,  we have
\begin{align*}
&N_0^{(n)}=\bigl\langle t^{\ceil{(\a-\b)l}}((1-\b)\o_n-(1-\a)\y_n)\bigr\rangle, 
\\&N_\infty^{(n)}=\sH_{e,\infty},
\\&N_\z^{(n)}=\angle{\y_n}\quad (\z\in\mu_l).
\end{align*}
Therefore, we have
$$\dim H_1^\dR(X_t)^{(n)} = \begin{cases} 1 & \text{if $t=0$ or $t \in \mu_l$}, \\
0 & \text{if $t=\infty$}.
\end{cases}$$
\end{ppn}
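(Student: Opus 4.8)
The plan is to reduce each $N_t$, for $t\in D$, to the image of the residue $\Res_t(\nb)$ regarded as an endomorphism of the fiber $\sH_{e,t}$, which is then read off directly from Proposition \ref{can ext}. The first step is this reduction: fix a local basis $e_1,e_2$ of $\sH_e$ near $t$ as in Proposition \ref{can ext}; a section in $\G(U_t,\sH_e)$ has the form $f_1e_1+f_2e_2$ with $f_1,f_2$ holomorphic at $t$, and since $\nb(f_ie_i)=df_i\ot e_i+f_i\,\nb e_i$ with $\Res_t(df_i)=0$, one gets that $\Res_t\bigl(\nb(f_1e_1+f_2e_2)\bigr)$ is $\Res_t(\nb)$ applied to the column vector $(f_1(t),f_2(t))$. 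Hence $N_t=\Image\bigl(\Res_t(\nb)\colon\sH_{e,t}\to\sH_{e,t}\bigr)$.

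Before going through the points of $D$, I would record the elementary non-vanishing statements that govern these images. Writing $\a=\{na/p\}$, $\b=\{nb/p\}$, $1-\b=\{nc/p\}$ with $1\le n\le p-1$ and $0<a,b,c<p$, and using that $p\ne l$, one has $\{\a l\}\ne0$ (else $p\mid a$), $\{(1-\b)l\}\ne0$ (else $p\mid c$), and, when $\a\ne\b$, $\{(\b-\a)l\}\ne0$ (else $p\mid n(b-a)$, forcing $a=b$ and $\a=\b$); also $1-\a\ne0$ since $0<\a<1$.

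Now I would treat the three types of points. At $t=\z\in\mu_l$, Proposition \ref{can ext} gives $\Res_\z(\nb)=-(1-\a)\bigl(\begin{smallmatrix}0&0\\1&0\end{smallmatrix}\bigr)$ in the basis $\langle\o_n,\y_n\rangle$, so its image is $\langle\y_n\rangle$. At $t=\infty$, the residue is either $\mathrm{diag}(\{(1-\b)l\},\{\a l\})$ (if $\a+\b\ne1$) or lower triangular with both diagonal entries $\{\a l\}$ (if $\a+\b=1$); by the non-vanishing above it is invertible in either case, so $N_\infty^{(n)}=\sH_{e,\infty}$. At $t=0$ with $\a\ne\b$, the residue in the basis whose second element is $t^{\ceil{(\a-\b)l}}((1-\b)\o_n-(1-\a)\y_n)$ has its unique nonzero entry equal to $\{(\b-\a)l\}\ne0$, so $N_0^{(n)}$ is spanned by that second basis element; at $t=0$ with $\a=\b$, the residue is $l(1-\a)\bigl(\begin{smallmatrix}-1&-1\\1&1\end{smallmatrix}\bigr)$ in $\langle\o_n,\y_n\rangle$, of image $\langle\o_n-\y_n\rangle$, which matches the stated generator since $\ceil{(\a-\b)l}=0$ and $(1-\b)\o_n-(1-\a)\y_n=(1-\a)(\o_n-\y_n)$ when $\a=\b$. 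Finally, the commuting square preceding the statement identifies $H_1^\dR(X_t)^{(n)}$ with $\sH_{e,t}^{(n)}/N_t^{(n)}$, which is a quotient of a rank-$2$ space, so its dimension is $2-\dim N_t^{(n)}$; the three computations give $\dim N_t^{(n)}=1,2,1$ for $t=0,\infty,\z\in\mu_l$, respectively, which is the asserted formula.

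I do not expect a real obstacle: once the identity $N_t=\Image\Res_t(\nb)$ is in place, everything is bookkeeping on top of Proposition \ref{can ext}. The only points needing a little care are precisely that reduction — checking that the $df_i$ terms drop out of the residue and that one is computing the image of an endomorphism of the fiber, not of a space of sections — and the non-vanishing of the scalars $\{(\b-\a)l\}$, $\{\a l\}$, $\{(1-\b)l\}$, $1-\a$, since it is these that prevent the images from collapsing to something smaller than claimed.
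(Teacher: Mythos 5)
Your proposal is correct and follows essentially the same route as the paper, which simply declares the proposition ``immediate from Proposition \ref{can ext}'': the intended argument is precisely to identify $N_t^{(n)}$ with the image of the residue endomorphism $\Res_t(\nb)$ on the fiber and read off that image from the residue matrices. Your reduction (the $df_i$ terms contribute no residue, so constants suffice) and your non-vanishing checks for $\{\a l\}$, $\{(1-\b)l\}$, $\{(\b-\a)l\}$ and $1-\a$ are exactly the details the authors leave implicit, and they all check out, including the reconciliation of the two forms of the generator of $N_0^{(n)}$ when $\a=\b$.
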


Later, we shall use the following. 
\begin{lem}\label{res 0}
Let $n=1,\dots, p-1$. 
\begin{enumerate}
\item
If $\a \le \b$, then $t^m\o_n|_{t=0} \in N_0^{(n)}$ if $m>0$, and $\not\in N_0^{(n)}$ if $m=0$. 
\item 
If $\a>\b$, then $t^m\o_n|_{t=0}\in N_0^{(n)}$ if $m\ge \ceil{(\a-\b)l}$.  
\end{enumerate}
\end{lem}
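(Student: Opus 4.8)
The plan is to deduce the lemma directly from the explicit local frames of $\sH_e^{(n)}$ at $t=0$ given in Corollary~\ref{local hodge fil} and the description of $N_0^{(n)}$ in Proposition~\ref{N}. Throughout, $|_{t=0}$ is to be read as the image in the fiber $\sH_{e,0}$, and I will use only two elementary facts: a local frame of a vector bundle restricts to a basis of each fiber; and if $e$ is a local section and $m\ge 1$, then $t^m e|_{t=0}=0$. It is also convenient to record the identity $\ceil{(\a-\b)l}=-\floor{(\b-\a)l}$ (from $\ceil x=-\floor{-x}$), so that when $\a\le\b$ the generator of $N_0^{(n)}$ produced by Proposition~\ref{N} is literally the second member of the frame of Corollary~\ref{local hodge fil}.

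For (i), when $\a\le\b$ I would use that $\o_n$ is itself a local frame element of $\sH_e^{(n)}$ at $t=0$, so $t^m\o_n$ is a local section of $\sH_e^{(n)}$ for every $m\ge 0$. If $m>0$, then $t^m\o_n|_{t=0}=0\in N_0^{(n)}$. If $m=0$, then $\o_n|_{t=0}$ is the first vector of the fiber basis attached to that frame, hence is linearly independent of the second vector, which (by Proposition~\ref{N} and the identity above) spans $N_0^{(n)}$; therefore $\o_n|_{t=0}\notin N_0^{(n)}$.

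For (ii), set $q=\ceil{(\a-\b)l}$; since $0<\a-\b<1$ we have $q\ge 1$. By Corollary~\ref{local hodge fil}, $\{t^q\o_n,\ \o_n-\y_n\}$ is a local frame of $\sH_e^{(n)}$ at $t=0$, so $t^m\o_n=t^{m-q}\cdot t^q\o_n$ is a local section for every $m\ge q$. If $m>q$, then $t^m\o_n|_{t=0}=0\in N_0^{(n)}$. For $m=q$, the key observation is that $t^q(\o_n-\y_n)$ is $t^q$ times a frame element, so $t^q(\o_n-\y_n)|_{t=0}=0$, i.e.\ $t^q\y_n|_{t=0}=t^q\o_n|_{t=0}$ in the fiber; hence by Proposition~\ref{N} the generator of $N_0^{(n)}$ satisfies
$$t^q\bigl((1-\b)\o_n-(1-\a)\y_n\bigr)\big|_{t=0}=\bigl((1-\b)-(1-\a)\bigr)t^q\o_n|_{t=0}=(\a-\b)\,t^q\o_n|_{t=0},$$
which is a nonzero scalar multiple of $t^q\o_n|_{t=0}$ since $\a\ne\b$. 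Thus $N_0^{(n)}=\langle t^q\o_n|_{t=0}\rangle$, and in particular $t^q\o_n|_{t=0}\in N_0^{(n)}$.

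I do not expect any serious obstacle: the lemma is essentially an unwinding of Corollary~\ref{local hodge fil} and Proposition~\ref{N}. The only points that need a little care are that $|_{t=0}$ must be taken to mean the fiber value — the analogous assertions at the level of stalks would fail, since $\y_n$ genuinely appears in the generator of $N_0^{(n)}$ — that $t^m\o_n$ is a section of $\sH_e^{(n)}$ only for $m$ in the ranges used above, and the small computation $t^q\y_n|_{t=0}=t^q\o_n|_{t=0}$ entering case~(ii) with $m=q$.
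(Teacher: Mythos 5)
Your proof is correct and follows essentially the same route as the paper: everything except the case $\a>\b$, $m=\ceil{(\a-\b)l}$ is read off from Corollary \ref{local hodge fil} and Proposition \ref{N}, and for that remaining case your computation (using $t^q(\o_n-\y_n)|_{t=0}=0$ to identify $t^q\o_n|_{t=0}$ with $(\a-\b)^{-1}$ times the generator of $N_0^{(n)}$) is the same algebra the paper performs by adding the vanishing term $\frac{1-\a}{\a-\b}t^m(\o_n-\y_n)|_{t=0}$.
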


\begin{proof}
By Corollary \ref{local hodge fil} and Proposition \ref{N}, this is trivial except when $\a>\b$ and $m=\ceil{(\a-\b)l}$.  In this case, we have
$$t^m\o_n|_{t=0}=t^m\o_n|_0+\frac{1-\a}{\a-\b} t^m(\o_n-\y_n)|_{t=0}
=\frac{t^m((1-\b)\o_n-(1-\a)\y_n)|_{t=0}}{\a-\b}\in N_0^{(n)}.$$
\end{proof}


\subsection{Hodge numbers} 

For each $n=1,\dots, p-1$, we obtained an exact sequence 
\begin{equation}\label{key sequence}
0 \to (H^2_\dR(X)/\angle{Z})^{(n)} \ra H^1(\P^1,\sE^{(n)}) \os{\Res}\ra 
 \sH_{e,0}^{(n)}/N_0^{(n)} \oplus \bigoplus_{\z\in\mu_l}\sH_{e,\z}^{(n)}/N_\z^{(n)}
 \to 0.
\end{equation}

Firstly, we give a basis of $F^2$. 
By \eqref{fil}, we have an embedding
$$\i\colon F^2(H_\dR^2(X)/\angle{Z})^{(n)} \hra \G(\P^1, \O^1(\log D) \ot F^1\sH_e^{(n)}).$$
By this, we identify $F^2(H_\dR^2(X)/\angle{Z})^{(n)}$ with the elements of the right member having trivial residues. Recall the rational $2$-forms 
$$\o_{m,n} = t^m \frac{dt}{t} \ot \o_n.$$ 

\begin{ppn}\label{F^2 basis}
For each $n=1,\dots, p-1$, a basis of $F^2(H_\dR^2(X)/\angle{Z})^{(n)}$ is given by 
$\{\o_{m,n} \mid m \in I^2_n\}$
where
$$I^2_n:=\bigl\{m \mid \max\{1, \ceil{(\a-\b)l} \} \leq m \leq \min\{\floor{\a l}, \floor{(1-\b) l}\}\bigr\}.$$
In particular, 
$$\dim F^2(H^2_\dR(X)/\angle{Z})^{(n)}=\min\{\floor{\a l}, \floor{(1-\b) l}\}-\max\{0,\floor{(\a-\b)l}\}.$$
\end{ppn}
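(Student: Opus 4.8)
The plan is to realize $F^2(H_\dR^2(X)/\angle{Z})^{(n)}$ explicitly as a space of global rational $2$-forms and then impose the residue conditions. By \eqref{fil}, $F^2H^1(\P^1,\sE^{(n)})=\G(\P^1,\O^1(\log D)\ot F^1\sH_e^{(n)})$, and by \eqref{key sequence} together with $N_\infty^{(n)}=\sH_{e,\infty}$ (Proposition \ref{N}), the subspace $F^2(H^2_\dR(X)/\angle{Z})^{(n)}$ is identified with the set of global sections $\eta$ of $\O^1(\log D)\ot F^1\sH_e^{(n)}$ with $\Res_0(\eta)\in N_0^{(n)}$ and $\Res_\z(\eta)\in N_\z^{(n)}$ for every $\z\in\mu_l$; there is no condition at $\infty$ since $\sH_{e,\infty}^{(n)}/N_\infty^{(n)}=0$.

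First I would describe all the global sections of $\O^1(\log D)\ot F^1\sH_e^{(n)}$. By Corollary \ref{can ext fil}, $F^1\sH_e^{(n)}$ is generated by $\o_n$ over $\mathbb{G}_m=\P^1\setminus\{0,\infty\}$, so such an $\eta$ has the form $\frac{g(t)}{t^l-1}\cdot\frac{dt}{t}\ot\o_n$ with $g\in\ol\Q[t,t^{-1}]$, the factor $t^l-1$ accounting for the logarithmic poles allowed along $\mu_l$. Near each $\z\in\mu_l$, $\o_n$ is a local generator of $F^1\sH_e^{(n)}$, so $\Res_\z(\eta)$ is automatically a multiple of $\o_n|_\z$; since $N_\z^{(n)}=\angle{\y_n}$ and $\o_n,\y_n$ form a local basis of $\sH_e^{(n)}$ at $\z$, the condition $\Res_\z(\eta)\in N_\z^{(n)}$ forces $\Res_\z(\eta)=0$, i.e. $g(\z)=0$. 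Requiring this for all $\z\in\mu_l$ gives $(t^l-1)\mid g$, hence $\eta=q(t)\cdot\frac{dt}{t}\ot\o_n$ with $q\in\ol\Q[t,t^{-1}]$; that is, $\eta$ is a $\ol\Q$-linear combination of the $\o_{m,n}$.

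Next I would read off the remaining conditions. By Corollary \ref{local hodge fil}, near $t=0$ the line bundle $F^1\sH_e^{(n)}$ is generated by $t^{e_0}\o_n$ with $e_0=\max\{0,\ceil{(\a-\b)l}\}$, and near $t=\infty$ by $t^{M}\o_n$ with $M=\min\{\floor{\a l},\floor{(1-\b)l}\}$; since $\O^1(\log D)$ is generated by $\frac{dt}{t}$ at both points, $\o_{m,n}$ is a global section of $\O^1(\log D)\ot F^1\sH_e^{(n)}$ exactly when $e_0\le m\le M$. It remains to impose $\Res_0(\eta)\in N_0^{(n)}$ on $\eta=\sum_{m=e_0}^{M}c_m\o_{m,n}$. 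If $\a\le\b$ (so $e_0=0$), then $\Res_0(\o_{0,n})=\o_n|_0$ and $\Res_0(\o_{m,n})=0$ for $m\ge1$, so $\Res_0(\eta)=c_0\,\o_n|_0$; since $\o_n|_0\notin N_0^{(n)}$ by Lemma \ref{res 0}(i), this forces $c_0=0$. If $\a>\b$ (so $e_0=\ceil{(\a-\b)l}\ge1$), then $\Res_0(\o_{e_0,n})=(t^{e_0}\o_n)|_0\in N_0^{(n)}$ by Lemma \ref{res 0}(ii) and $\Res_0(\o_{m,n})=0$ for $m>e_0$, so the condition holds automatically. In both cases the surviving indices are exactly $m\in I^2_n$. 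As the $\o_{m,n}$ are manifestly $\ol\Q$-linearly independent (on $T$ they equal $t^m\frac{dt}{t}\ot\o_n$ with distinct $m$), $\{\o_{m,n}\mid m\in I^2_n\}$ is a basis; the dimension formula follows by counting, since by $\gcd(l,p)=1$ the number $(\a-\b)l$ is an integer only when $\a=\b$, so that $\ceil{(\a-\b)l}=\floor{(\a-\b)l}+1$ whenever $\a>\b$.

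I expect the main obstacle to be the bookkeeping in the two middle steps: extracting the exponents $e_0$ and $M$ of $F^1\sH_e^{(n)}$ at $0$ and $\infty$ from the four-case statement of Corollary \ref{local hodge fil} and verifying that they always coincide with the above min/max expressions, and the delicate point that imposing $\Res_0(\eta)\in N_0^{(n)}$ removes precisely the index $m=0$ when $\a\le\b$ and nothing when $\a>\b$ — which requires the explicit form of $N_0^{(n)}$ in Proposition \ref{N} and both parts of Lemma \ref{res 0}. Everything else is formal once \eqref{fil} and \eqref{key sequence} are available.
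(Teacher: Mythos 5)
Your proposal is correct and takes essentially the same route as the paper: both identify $F^2$ with the global sections of $\O^1(\log D)\ot F^1\sH_e^{(n)}$ whose residues lie in $N_t^{(n)}$, use Corollary \ref{can ext fil}/\ref{local hodge fil} to pin down the twist of $F^1\sH_e^{(n)}$ at $0$ and $\infty$, and invoke Proposition \ref{N} and Lemma \ref{res 0} to see that the residue conditions kill exactly the poles along $\mu_l$ and the index $m=0$ when $\a\le\b$. The only cosmetic difference is that the paper first lists the extra global sections $t^j\frac{dt}{t-\z}\ot\o_n$ and then discards them by their residues at $\z$, whereas you fold that step into the divisibility condition $(t^l-1)\mid g$.
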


\begin{proof}
Let $F^1\sH_e^{(n)} = \sO(i)t^j \o_n$ be as in Corollary \ref{can ext fil} (i). 
One easily sees that a basis of $H^0(\P^1,\O^1(\log D)\ot F^1\sH_e^{(n)})$ is given by
$$\o_{m,n}\ (j\leq m\leq i+j), \quad t^j\frac{dt}{t-\z}\ot \o_n\ (\z\in\mu_l).$$
For the first type, the residues at $\z \in \mu_l$ are trivial. 
By Lemma \ref{res 0}, $\Res_0(\o_{m,n})=t^m\o_n$ is trivial for $m\ge j$ 
unless $\a \le \b$ and $m=0$.  
For the second type, it has trivial residues except at $\z$ and 
$$\Res_\z\left(t^j\frac{dt}{t-\z}\ot \o_n\right)=t^j\o_n,$$
which is non-trivial by Proposition \ref{N}. 
These show that a basis of $F^2(H_\dR^2(X)/\angle{Z})^{(n)}$ is given by $\o_{m,n}$ with $j \le m \le i+j$ and $m \ne j=0$ if $\a\le \b$. 
Hence the proposition follows by Corollary \ref{can ext fil} (i). 
\end{proof}

Since $(\sH_{e,0}/N_0)^{(n)}$ and $(\sH_{e,\z}/N_\z)^{(n)}$ are all $1$-dimensional, 
the above proof implies the following. 

\begin{cor}\label{res F^2}
For $n=1,\dots, p-1$, we have
$$\Res\left(F^2H^1(\P^1,\sE^{(n)})\right)=
\begin{cases}
(\sH_{e,0}/N_0)^{(n)} \oplus \bigoplus_{\z \in \mu_l}\limits (\sH_{e,\z}/N_\z)^{(n)} & \text{if $\a \le \b$},\\
\bigoplus_{\z \in \mu_l}\limits (\sH_{e,\z}/N_\z)^{(n)} & \text{if $\a>\b$}.
\end{cases}
$$
\end{cor}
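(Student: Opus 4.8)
The plan is to read the statement off directly from the proof of Proposition \ref{F^2 basis}. By \eqref{fil} we have $F^2H^1(\P^1,\sE^{(n)})=H^0(\P^1,\O^1(\log D)\ot F^1\sH_e^{(n)})$; writing $F^1\sH_e^{(n)}=\sO(i)t^j\o_n$ as in Corollary \ref{can ext fil} (i), the proof of Proposition \ref{F^2 basis} exhibits the explicit basis consisting of the $2$-forms $\o_{m,n}$ for $j\le m\le i+j$ together with $t^j\frac{dt}{t-\z}\ot\o_n$ for $\z\in\mu_l$. Since $\Res$ is assembled from the local residue maps $\Res_t$ at the points $t\in D$ (cf. Sect. 4.2), the image $\Res(F^2H^1(\P^1,\sE^{(n)}))$ is the span of the residues of these basis elements, and it suffices to evaluate $\Res$ on each of them.

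First I would record that $t^j\frac{dt}{t-\z}\ot\o_n$ has vanishing residue at every point of $D$ other than $\z$, while $\Res_\z(t^j\frac{dt}{t-\z}\ot\o_n)=t^j\o_n|_{t=\z}$; as $t^j$ is a unit near $\z$ and $\o_n\notin\langle\y_n\rangle=N_\z^{(n)}$ by Proposition \ref{N}, this is a nonzero element of the one-dimensional space $(\sH_{e,\z}/N_\z)^{(n)}$ and hence spans it. Next, each $\o_{m,n}$ has vanishing residue at all $\z\in\mu_l$, and $\Res_0(\o_{m,n})=t^m\o_n|_{t=0}$. If $\a>\b$, then $j=\ceil{(\a-\b)l}\ge 1$, so every basis index satisfies $m\ge j$; by Lemma \ref{res 0} (ii) we get $t^m\o_n|_{t=0}\in N_0^{(n)}$, so all these residues vanish in $(\sH_{e,0}/N_0)^{(n)}$. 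If $\a\le\b$, then $j=0$; for $m>0$ Lemma \ref{res 0} (i) again gives $\Res_0(\o_{m,n})=0$, whereas $\Res_0(\o_{0,n})=\o_n|_{t=0}\notin N_0^{(n)}$ spans the one-dimensional space $(\sH_{e,0}/N_0)^{(n)}$.

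Assembling these computations, the span of the residues of the basis elements equals $\bigoplus_{\z\in\mu_l}(\sH_{e,\z}/N_\z)^{(n)}$ when $\a>\b$ and equals $(\sH_{e,0}/N_0)^{(n)}\oplus\bigoplus_{\z\in\mu_l}(\sH_{e,\z}/N_\z)^{(n)}$ when $\a\le\b$, which is the asserted identity. I expect no real obstacle here; the only points needing attention are that the basis $2$-form $\o_{0,n}$ occurs precisely when $\a\le\b$ (this is exactly the dichotomy in Corollary \ref{can ext fil} (i) and the definition of $j$), and that residues at distinct points of $D$ lie in distinct one-dimensional summands, so that no cancellation among basis elements can occur and the image is simply the sum of those summands that are hit.
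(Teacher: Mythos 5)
Your proposal is correct and follows essentially the same route as the paper: the paper derives this corollary directly from the proof of Proposition \ref{F^2 basis}, which uses exactly the basis $\o_{m,n}$ ($j\le m\le i+j$) and $t^j\frac{dt}{t-\z}\ot\o_n$ of $H^0(\P^1,\O^1(\log D)\ot F^1\sH_e^{(n)})$ together with Lemma \ref{res 0} and Proposition \ref{N} to evaluate the residues. Your write-up merely makes explicit the bookkeeping (in particular that $j=0$ exactly when $\a\le\b$ and that residues at distinct points land in distinct one-dimensional summands) that the paper leaves implicit.
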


\begin{cor}\label{F^2 non-trivial}
Suppose that $p<l$. Then we have
$F^2(H_\dR^2(X)/\angle{Z})^{(n)}\ne 0$
for any $n=1,\dots, p-1$. 
\end{cor}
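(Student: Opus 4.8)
The statement to prove is Corollary~\ref{F^2 non-trivial}: if $p<l$, then $F^2(H_\dR^2(X)/\angle{Z})^{(n)}\ne 0$ for every $n=1,\dots,p-1$. By Proposition~\ref{F^2 basis}, this is equivalent to showing that the index set $I^2_n$ is nonempty, i.e.
$$\max\{1,\ceil{(\a-\b)l}\}\le\min\{\floor{\a l},\floor{(1-\b)l}\},$$
where $\a=\{\frac{na}{p}\}$, $\b=\{\frac{nb}{p}\}$. So the whole proof is a numerical inequality about floors of multiples of $l$, and the hypothesis $p<l$ must be what makes it work. The plan is to split into the two cases $\a\le\b$ and $\a>\b$ according to the definition of $\max$, and in each case bound the relevant floors from below using that $\a,\b$ are rationals with denominator $p$ while $l$ is a large prime.

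First I would record the basic estimates. Since $0<\a<1$ and $\a\in\frac1p\Z$, we have $\a\ge\frac1p$, hence $\floor{\a l}\ge\floor{l/p}\ge1$ because $l>p$; similarly $\floor{(1-\b)l}\ge\floor{l/p}\ge1$ since $1-\b\ge\frac1p$. That already handles the ``$\max=1$'' situation, i.e. the case $\a\le\b$: there $\ceil{(\a-\b)l}\le0<1$, so I only need $\min\{\floor{\a l},\floor{(1-\b)l}\}\ge1$, which is the estimate just given. So the case $\a\le\b$ follows immediately from $p<l$.

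The remaining case is $\a>\b$, where $I^2_n\ne\emptyset$ amounts to $\ceil{(\a-\b)l}\le\min\{\floor{\a l},\floor{(1-\b)l}\}$. Here I would argue as follows. Write $\a-\b=\frac{r}{p}$ with $1\le r\le p-1$ (an integer since $p\mid na-nb$ forces... actually $\a-\b=\{\frac{na}{p}\}-\{\frac{nb}{p}\}$ differs from $\frac{n(a-b)}{p}$ by an integer, so $p(\a-\b)\in\Z$, and $0<\a-\b<1$ gives $1\le r\le p-1$). Then $\ceil{(\a-\b)l}=\ceil{rl/p}$. On the other side, $\a=\b+\frac rp$, so $\floor{\a l}=\floor{\b l+\frac{rl}{p}}\ge\floor{\frac{rl}{p}}\ge\ceil{\frac{rl}{p}}-1$; I need to upgrade this to $\ge\ceil{rl/p}$, which holds unless $p\mid rl$, i.e. (as $l$ is prime $>p\ge r+1$) unless the ceiling and floor coincide — in which case $\floor{rl/p}=\ceil{rl/p}$ already. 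For the other term, $1-\b=\g$ and I need $\floor{\g l}\ge\ceil{(\a-\b)l}=\ceil{(1-\b-(1-\a))l}$; since $\g=1-\b>\a-\b$ (because $\a<1$), write $\g=(\a-\b)+(1-\a)$ with $1-\a\ge\frac1p$, so $\floor{\g l}\ge\floor{(\a-\b)l+l/p}\ge\ceil{(\a-\b)l}$ once $\floor{l/p}\ge1$, i.e. once $l>p$, with the boundary fractional-part bookkeeping handled as above. Assembling these gives $\ceil{(\a-\b)l}\le\min\{\floor{\a l},\floor{(1-\b)l}\}$, hence $I^2_n\ne\emptyset$.

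The main obstacle is the fractional-part/ceiling-vs-floor bookkeeping: the inequalities $\floor{x+y}\ge\floor x+\floor y$ and $\floor{x}\ge\ceil x-1$ are loose by one, and one must check that the hypothesis $p<l$ gives enough room ($\floor{l/p}\ge1$ is exactly the margin needed) so that the off-by-one never bites — in particular tracking carefully the cases where $p\mid rl$ or $p\mid(p-r)l$, which cannot occur for $0<r<p$ when $l$ is prime and $l\nmid p$, so those ceilings equal the corresponding floors and the estimates are clean. Once that is organized, the proof is short. I would present it as: reduce to $I^2_n\ne\emptyset$ via Proposition~\ref{F^2 basis}; dispose of $\a\le\b$ using $\a,1-\b\ge\frac1p$ and $l>p$; and treat $\a>\b$ by the additive decomposition of $\a$ and of $1-\b$ past the common term $\a-\b$, again invoking $l>p$.
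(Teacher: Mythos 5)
Your proposal is correct and is essentially the paper's own argument: the paper reduces to $I_n^2\ne\emptyset$ via Proposition \ref{F^2 basis} and deduces it in one line from $\a,1-\b\ge\frac1p$ (giving $\floor{\a l},\floor{(1-\b)l}\ge1$) and $\b,1-\a\ge\frac1p$ (giving $(\a-\b)l<\a l-1$ and $(\a-\b)l<(1-\b)l-1$), which are exactly the inequalities you use; your case split on $\a\le\b$ versus $\a>\b$ and the introduction of $r=p(\a-\b)$ are harmless repackaging. Just tighten the sentence claiming the upgrade to $\floor{\a l}\ge\ceil{(\a-\b)l}$ ``holds unless $p\mid rl$'': the real source of the extra unit is $\b l\ge l/p>1$, exactly as in your treatment of $\floor{(1-\b)l}$.
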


\begin{proof}
Since $\a, 1-\b \ge 1/p$, we have $l\a, l(1-\b)>1$. Since $\b \ge 1/p$ and $\a \le 1-1/p$, we have $(\a-\b)l < \a l-1, (1-\b)l-1$. Hence we have $I_n^2 \ne \emptyset$.
\end{proof}

Now, we determine the other Hodge numbers. 
\begin{lem}\label{res gr^1}
Let $n=1,\dots, p-1$. 
\begin{enumerate}
\item
If $\a \le \b$, then we have 
$$\Gr^1_F(H_\dR^2(X)/\angle{Z})^{(n)}= \Gr_F^1 H^1(\P^1,\sE^{(n)}).$$
\item
If $\a>\b$, then we have an exact sequence 
$$0 \ra \Gr^1_F(H_\dR^2(X)/\angle{Z})^{(n)} \ra \Gr_F^1 H^1(\P^1,\sE^{(n)}) \os{\Res_0}\ra 
(\sH_{e,0}/N_0)^{(n)}\ra 0.$$
\end{enumerate}
\end{lem}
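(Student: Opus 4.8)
The plan is to deduce everything from the exact sequence \eqref{key sequence} by taking graded pieces and feeding in the residue computation of Corollary \ref{res F^2}. Write $A^{(n)}=(H^2_\dR(X)/\angle{Z})^{(n)}$, $B^{(n)}=H^1(\P^1,\sE^{(n)})$ and
$C^{(n)}=\sH_{e,0}^{(n)}/N_0^{(n)}\oplus\bigoplus_{\z\in\mu_l}\sH_{e,\z}^{(n)}/N_\z^{(n)}$, so that \eqref{key sequence} reads $0\to A^{(n)}\to B^{(n)}\os{\Res}\to C^{(n)}\to 0$; by strictness of morphisms of mixed Hodge structures the Hodge filtration on $A^{(n)}$ is the one induced from $B^{(n)}$. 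The whole argument is then a diagram chase on this sequence.

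First I would restrict to $F^2$. By Corollary \ref{res F^2}, $\Res(F^2B^{(n)})$ is all of $C^{(n)}$ if $\a\le\b$ and is $\bigoplus_{\z\in\mu_l}\sH_{e,\z}^{(n)}/N_\z^{(n)}$ if $\a>\b$; in either case its kernel is $F^2B^{(n)}\cap A^{(n)}=F^2A^{(n)}$, so
$$0\to F^2A^{(n)}\to F^2B^{(n)}\to \Res(F^2B^{(n)})\to 0$$
is a short exact subobject of \eqref{key sequence}. Passing to cokernels (snake lemma) gives a short exact sequence $0\to A^{(n)}/F^2A^{(n)}\to B^{(n)}/F^2B^{(n)}\to C^{(n)}/\Res(F^2B^{(n)})\to 0$, whose last term is $0$ if $\a\le\b$ and is $(\sH_{e,0}/N_0)^{(n)}$ if $\a>\b$. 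Since $F^2\subseteq F^1$, taking $F^1$-parts (exact, as $A^{(n)}/F^2A^{(n)}$ carries the induced and the quotient term the quotient filtration) yields an exact sequence $0\to\Gr^1_FA^{(n)}\to\Gr^1_FB^{(n)}\to Q^{(n)}\to 0$, where $Q^{(n)}$ is the image of $\Gr^1_FB^{(n)}$ in $C^{(n)}/\Res(F^2B^{(n)})$. If $\a\le\b$ then $Q^{(n)}=0$, which is (i). If $\a>\b$ then $Q^{(n)}$ is the image of $\Res_0\colon F^1B^{(n)}\to(\sH_{e,0}/N_0)^{(n)}$, and (ii) reduces to showing this map is surjective.

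So assume $\a>\b$. Writing $t^{\ceil{(\a-\b)l}}\bigl((1-\b)\o_n-(1-\a)\y_n\bigr)=(\a-\b)\,t^{\ceil{(\a-\b)l}}\o_n+(1-\a)\,t^{\ceil{(\a-\b)l}}(\o_n-\y_n)$ and using $\ceil{(\a-\b)l}\ge 1$ together with the local basis $\{t^{\ceil{(\a-\b)l}}\o_n,\o_n-\y_n\}$ of $\sH_{e,0}^{(n)}$ and $F^1\sH_e^{(n)}=\sO(i)t^{\ceil{(\a-\b)l}}\o_n$ of Corollary \ref{can ext fil}(i), one sees that in the fibre $\sH_{e,0}^{(n)}$ the last summand vanishes while $t^{\ceil{(\a-\b)l}}\o_n$ spans $(F^1\sH_e^{(n)})|_{t=0}$; hence, by Proposition \ref{N}, $N_0^{(n)}=(F^1\sH_e^{(n)})|_{t=0}$ and therefore $(\sH_{e,0}/N_0)^{(n)}=(\Gr^0_F\sH_e^{(n)})|_{t=0}$. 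Under the description \eqref{gr} of $\Gr^1_FB^{(n)}$ as $\Coker\bigl(H^0(\P^1,F^1\sH_e^{(n)})\os{\ol\nb}\to H^0(\P^1,\O^1(\log D)\ot\Gr^0_F\sH_e^{(n)})\bigr)$, the map $\Res_0$ becomes the residue-at-$0$ map on global sections (it is well defined on the cokernel because $\Res_0\circ\nb$ lands in $N_0^{(n)}=(F^1\sH_e^{(n)})|_{t=0}$ by the definition of $N_0$). Thus it suffices that $H^0(\P^1,\O^1(\log D)\ot\Gr^0_F\sH_e^{(n)})\to(\Gr^0_F\sH_e^{(n)})|_{t=0}$ be surjective, for which it is enough that $H^1(\P^1,\O^1(\log(D\setminus\{0\}))\ot\Gr^0_F\sH_e^{(n)})=0$. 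Since $D$ has $l+2$ points and, by Corollary \ref{can ext fil}(ii), $\deg\Gr^0_F\sH_e^{(n)}$ equals $-\ceil{(1-\a)l}=\floor{\a l}-l$ or $-\ceil{\b l}=\floor{(1-\b)l}-l$ according to the subcase, this line bundle has degree $\floor{\a l}-1\ge -1$ or $\floor{(1-\b)l}-1\ge -1$; in either case the $H^1$ vanishes, completing (ii).

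The main obstacle is the last paragraph: recognizing that for $\a>\b$ the residue lattice $N_0^{(n)}$ is exactly the fibre of $F^1\sH_e^{(n)}$ — so that $(\sH_{e,0}/N_0)^{(n)}$ is naturally the fibre of $\Gr^0_F\sH_e^{(n)}$ — and then carrying out the degree bookkeeping so that the relevant $H^1$ on $\P^1$ vanishes. Everything preceding that is formal diagram chasing on \eqref{key sequence}.
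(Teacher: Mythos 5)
Your argument is correct, and the first half (the filtered diagram chase on \eqref{key sequence} reducing everything to the surjectivity of $\Res_0$ on $\Gr_F^1$ when $\a>\b$) is exactly the reduction the paper performs, just spelled out in more detail; the paper dispatches it in one sentence by citing \eqref{key sequence} and Corollary \ref{res F^2}. Where you genuinely diverge is the final step. The paper simply writes down an explicit global section of $\O^1(\log D)\ot \Gr_F^0\sH_e^{(n)}$ in each of the two subcases $\floor{\a l}\ge\floor{(1-\b)l}$ and $\floor{\a l}<\floor{(1-\b)l}$ (namely $\frac{dt}{t(1-t^l)}\ot(\o_n-\y_n)$ and $\frac{dt}{t(1-t^l)}\ot((1-\a-\b)t^l\o_n-(1-\a)(\o_n-\y_n))$), checks via Proposition \ref{N} that its residue at $0$ is nonzero modulo $N_0^{(n)}$, and concludes by the one-dimensionality of $(\sH_{e,0}/N_0)^{(n)}$. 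You instead observe that for $\a>\b$ one has $N_0^{(n)}=(F^1\sH_e^{(n)})|_{t=0}$ (your decomposition $ (1-\b)\o_n-(1-\a)\y_n=(\a-\b)\o_n+(1-\a)(\o_n-\y_n)$ together with $\ceil{(\a-\b)l}\ge 1$ is the right computation, and it is essentially Lemma \ref{res 0}(ii) read backwards), so that the target becomes the fibre of $\Gr_F^0\sH_e^{(n)}$ at $0$ and surjectivity follows from the vanishing of $H^1(\P^1,\O^1(\log(D\setminus\{0\}))\ot\Gr_F^0\sH_e^{(n)})$; your degree count using Corollary \ref{can ext fil}(ii) is accurate. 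Your route avoids having to guess the right section and makes explicit why the residue of a local lift is well defined modulo $N_0^{(n)}$ (a point the paper leaves implicit), at the cost of importing the degree bookkeeping for $\Gr_F^0\sH_e^{(n)}$ that the paper only needs later, in Propositions \ref{hodge number 1} and \ref{hodge number 0}.
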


\begin{proof}
By \eqref{key sequence} and Corollary \ref{res F^2}, we are left to show the non-triviality of $\Res_0$ in the case (ii). 
If $\floor{\a l}\ge\floor{(1-\b)l}$, consider
$$\frac{dt}{t(1-t^l)}\ot(\o_n-\y_n).$$
By Corollary \ref{can ext fil} (ii), this is an element of $H^0(\P^1,\O^1(\log D)\ot \Gr_F^0 \sH_e^{(n)})$. 
Its residue at $0$ is 
$\o_n-\y_n \not\equiv 0\pmod{N_0}$ by Proposition \ref{N}. 
If $\floor{\a l}<\floor{(1-\b)l}$,  consider similarly
$$\frac{dt}{t(1-t^l)}\ot \left((1-\a-\b)t^l\o_n-(1-\a)(\o-\y_n)\right),$$
whose residue at $0$ is $-(1-\a)(\o_n-\y_n)\not\equiv 0\pmod{N_0}$. 
\end{proof}

\begin{ppn}\label{hodge number 1}
For each $n=1,\dots, p-1$, we have
$$\dim \Gr_F^1(H_\dR^2(X)/\angle{Z})^{(n)}=\bigl|\floor{\a l}-\floor{(1-\b)l}\bigr|+\floor{|\a-\b|l}.$$
\end{ppn}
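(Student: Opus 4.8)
The plan is to reduce, via Lemma \ref{res gr^1}, to a computation of $\dim\Gr_F^1H^1(\P^1,\sE^{(n)})$: that lemma gives $\dim\Gr_F^1(H_\dR^2(X)/\angle{Z})^{(n)}=\dim\Gr_F^1H^1(\P^1,\sE^{(n)})$ when $\a\le\b$, and $=\dim\Gr_F^1H^1(\P^1,\sE^{(n)})-1$ when $\a>\b$. By \eqref{gr}, $\Gr_F^1H^1(\P^1,\sE^{(n)})$ is the cokernel of
$$\ol\nb\colon H^0(\P^1,F^1\sH_e^{(n)})\lra H^0(\P^1,\O^1(\log D)\ot\Gr_F^0\sH_e^{(n)}),$$
and both sheaves are line bundles on $\P^1$ whose local generators and degrees are read off from Corollary \ref{can ext fil}.

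The first step is to show $\ol\nb$ is injective. Since $F^1\sH_e^{(n)}=\sO(i)t^j\o_n$ by Corollary \ref{can ext fil}(i), every section is $\phi\o_n$ for a rational function $\phi$; if $\phi\o_n\in\ker\ol\nb$, then $\nb(\phi\o_n)=d\phi\ot\o_n+\phi\,\nb\o_n$ lies in $\O^1(\log D)\ot F^1\sH_e^{(n)}$, hence is a multiple of $\o_n$, but by Proposition \ref{gauss-manin} the $\y_n$-component of $\nb\o_n$ equals $\frac{l(1-\a)}{1-t^l}\frac{dt}{t}\ot\y_n\ne0$, forcing $\phi=0$. Hence $\dim\Coker\ol\nb=h^0(\O^1(\log D)\ot\Gr_F^0\sH_e^{(n)})-h^0(F^1\sH_e^{(n)})$. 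Now $F^1\sH_e^{(n)}$ has degree $i$ as in Corollary \ref{can ext fil}(i), while $\O^1(\log D)\ot\Gr_F^0\sH_e^{(n)}$ has degree $l+d$, where $d$ is the degree of the line bundle in Corollary \ref{can ext fil}(ii) and $\deg\O^1(\log D)=l$; using $0<\a,1-\b<1$ one checks $i\ge-1$ and $l+d\ge0$ in all four cases, so $h^0=\deg+1$ for each. The remaining task is the arithmetic over the four cases of Corollary \ref{can ext fil} (which are already indexed by the signs of $\floor{\a l}-\floor{(1-\b)l}$ and of $\a-\b$). The ingredients are the identities $l-\ceil{(1-\a)l}=\floor{\a l}$ and $l-\ceil{\b l}=\floor{(1-\b)l}$, together with the fact that $\a-\b$ has denominator dividing $p$, so that $\a\ne\b\Rightarrow(\a-\b)l\notin\Z$ (as $p\ne l$), whence $\ceil{(\a-\b)l}=\floor{|\a-\b|l}+1$, while $\a=\b$ (which happens only for $a=b$) is subsumed in the cases $\a\le\b$ with $\floor{|\a-\b|l}=0$. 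For instance, for $\floor{\a l}\ge\floor{(1-\b)l}$ and $\a>\b$ one finds $h^0(\O^1(\log D)\ot\Gr_F^0\sH_e^{(n)})=\floor{\a l}+1$ and $h^0(F^1\sH_e^{(n)})=\floor{(1-\b)l}-\ceil{(\a-\b)l}+1$, so $\dim\Coker\ol\nb=\floor{\a l}-\floor{(1-\b)l}+\ceil{(\a-\b)l}=\floor{\a l}-\floor{(1-\b)l}+\floor{|\a-\b|l}+1$, and subtracting the $1$ from Lemma \ref{res gr^1}(ii) yields $|\floor{\a l}-\floor{(1-\b)l}|+\floor{|\a-\b|l}$; the other three cases are handled the same way.

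The work here is almost entirely bookkeeping, and that is the main obstacle: one must verify in each case that the stray ceilings coming from Corollary \ref{can ext fil} collapse, via $(\a-\b)l\notin\Z$, to precisely $\floor{|\a-\b|l}$, and that the $-1$ of Lemma \ref{res gr^1}(ii) is absorbed exactly in the two cases with $\a>\b$. A convenient consistency check, which also reproduces the answer uniformly, is the identity $\dim\Coker\ol\nb=l+\deg\sH_e^{(n)}-2\deg F^1\sH_e^{(n)}$ together with $\deg\sH_e^{(n)}=-\sum_{t\in D}\operatorname{tr}\Res_t(\nb)=-l+\floor{\a l}+\floor{(1-\b)l}+\floor{(\b-\a)l}$, read off from Proposition \ref{can ext}, upon inserting $\deg F^1\sH_e^{(n)}$ from Corollary \ref{can ext fil}(i).
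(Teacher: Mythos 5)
Your proposal is correct and follows essentially the same route as the paper: reduce via Lemma \ref{res gr^1} to the cokernel of $\ol\nb$, prove injectivity, and count global sections of the two line bundles read off from Corollary \ref{can ext fil}, then do the four-case arithmetic. The only (harmless) variation is your injectivity argument, which examines the $\y_n$-component of $\nb(\phi\o_n)$ for a general rational section, whereas the paper checks non-vanishing on the basis $\{\o_{m,n}\}$ and invokes the fact that these lie in distinct $\tau$-eigenspaces.
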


\begin{proof}
First, we show that the map
$$\ol\nb \colon H^0(\P^1,F^1\sH_e^{(n)}) \ra H^0(\P^1,\O^1(\log D)\ot \Gr^0_F\sH_e^{(n)})$$
is injective. 
Let $F^1\sH_e^{(n)}=\sO(i)t^j\o_n$ as in Corollary \ref{can ext fil} (i). 
Then, $H^0(\P^1,F^1\sH_e^{(n)})$ has a basis
$\{\o_{m,n} \mid j\le m\le i+j\}$, and 
\begin{align*}
\nb\o_{m,n}=\frac{dt}{t}t^m\left\{(m-l(1-\b))\o_n+\frac{l(1-\a)}{1-t^l}\y_n\right\} 
 \equiv l(1-\a)\frac{dt}{t(1-t^l)}t^m \y_n\not\equiv 0
\end{align*}
modulo $H^0(\P^1,\O^1(\log D)\ot F^1\sH_e^{(n)})$. 
Since $0 \le i<l$ in every case, $\o_{m,n}$ belong to different eigenspaces with respect to the $\tau$-action. Hence the non-vanishing implies the injectivity. 

By Corollary \ref{can ext fil} (ii), we have $\Gr_F^0\sH_e^{(n)} \simeq \sO(k)$ where 
$$k:=\begin{cases}
-\ceil{(1-\a)l}+\floor{(\b-\a)l}&  \text{if $\floor{\a l} \ge \floor{(1-\b)l}, \a\le \b$},\\
-\ceil{(1-\a)l} & \text{if $\floor{\a l} \ge \floor{(1-\b)l}, \a>\b$},\\
\floor{(\b-\a)l}-\ceil{\b l} & \text{if $\floor{\a l} < \floor{(1-\b)l}, \a\le \b$},\\
-\ceil{\b l} & \text{if $\floor{\a l} < \floor{(1-\b)l}, \a>\b$}.
\end{cases}$$
Note that $k<0$ in any case. One sees that $H^0(\P^1,\O^1(\log D)\ot \sO(k))$ has a basis
$$\frac{t^m}{1-t^l} \frac{dt}{t} \ot\o_n \quad (0 \le m \le l+k).$$
By \eqref{gr} and the above injectivity, we have
\begin{align*}
\dim \Gr_F^1H^1(\P^1,\sE^{(n)})
&=\dim H^0(\P^1,\O^1(\log D)\ot \sO(k))-\dim H^0(\P^1, \sO(i))\\
&=(l+k+1)-(i+1)=l+k-i . 
\end{align*}
By Corollary \ref{can ext fil} (i) and Lemma \ref{res gr^1}, we obtain the desired formula. 
\end{proof}

\begin{cor}
Assume that $p<l$ and $p>2$ when $a=b$. Then we have 
$$\Gr_F^1(H_\dR^2(X)/\angle{Z})^{(n)} \ne 0$$ 
for any $n=1,\dots, p-1$. 
\end{cor}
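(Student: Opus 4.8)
The goal is to show $\Gr_F^1(H_\dR^2(X)/\angle{Z})^{(n)} \ne 0$ for all $n$, under the hypotheses $p<l$ and $p>2$ when $a=b$. By Proposition \ref{hodge number 1}, this dimension equals $\bigl|\floor{\a l}-\floor{(1-\b)l}\bigr|+\floor{|\a-\b|l}$, so I need to rule out the simultaneous vanishing $\floor{\a l}=\floor{(1-\b)l}$ \emph{and} $\floor{|\a-\b|l}=0$, i.e. $|\a-\b|l<1$. The plan is a purely elementary estimate on the rational numbers $\a=\{na/p\}$, $\b=\{nb/p\}$, recalling $b+c=p$ so $1-\b=\{nc/p\}$, and all of $\a,\b$ lie in $\frac{1}{p}\Z \cap (0,1)$, hence in $[\tfrac1p,\tfrac{p-1}{p}]$.

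First I would dispose of the case $\a\ne\b$. Then $\a-\b\in\frac1p\Z\setminus\{0\}$, so $|\a-\b|\ge\frac1p$, whence $|\a-\b|l\ge l/p>1$ by the hypothesis $p<l$; therefore $\floor{|\a-\b|l}\ge 1$ and we are done. So the only remaining case is $\a=\b$, which forces $p\mid n(a-b)$; since $1\le n\le p-1$ and $p$ is prime, this gives $a\equiv b\pmod p$, i.e. $a=b$ because $0<a,b<p$. In this case $\b=\a$, $1-\b=1-\a$, and the hypothesis is $p>2$. Now I must show $\floor{\a l}\ne\floor{(1-\a)l}$. Write $\a=n a/p - \floor{na/p}$; since $a=b$ and $b+c=p$ we also have $1-\a=\{nc/p\}$ with $c=p-a$. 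The two numbers $\a$ and $1-\a$ sum to $1$ and are distinct (if $\a=1-\a$ then $\a=\tfrac12$, impossible since $\a\in\frac1p\Z$ with $p$ odd); so one of them is $<\tfrac12$ and the other $>\tfrac12$, and moreover $|\a-(1-\a)|=|2\a-1|\ge\frac1p$ because $2\a-1\in\frac1p\Z$ and is nonzero (here we use $p$ odd again, so $2\a-1\ne0$; note $2\a-1$ could have even or odd numerator but in any case $|2\a-1|\ge 1/p$). Hence $|\a-(1-\a)|\,l\ge l/p>1$, which already shows $\floor{|\a-\b|l}=\floor{|2\a-1|l}\ge 1$ — so again the dimension is positive.

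Actually this shows the cleanest route is uniform: in \emph{every} case covered by the hypotheses one has $|\a-\b|l\ge 1$ strictly, hence $\floor{|\a-\b|l}\ge1$ and Proposition \ref{hodge number 1} gives the nonvanishing directly, without needing to analyze $\floor{\a l}-\floor{(1-\b)l}$ at all. Concretely: if $\a\ne\b$ then $|\a-\b|\ge\frac1p$ and $p<l$ give $|\a-\b|l>1$; if $\a=\b$ then (as shown) $a=b$, and the extra hypothesis $p>2$ ensures $\a\ne\tfrac12$ so that $|\a-\b|$ must be replaced by $|\a-(1-\b)|=|2\a-1|\ge\frac1p$, and again $|2\a-1|l>1$. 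Wait — I should be careful: when $\a=\b$ the quantity $|\a-\b|l$ is literally $0$, so here one genuinely needs the other term $\bigl|\floor{\a l}-\floor{(1-\b)l}\bigr|$. Since $\a=\b$ gives $1-\b=1-\a$, and $\a,1-\a$ lie on opposite sides of $\tfrac12$ with gap $\ge\frac1p$, there is an integer of the form... more carefully: $(1-\a)l-\a l=(1-2\a)l$, which has absolute value $\ge l/p>1$, so $\floor{(1-\a)l}$ and $\floor{\a l}$ cannot coincide (two reals more than $1$ apart have different floors). This settles the $\a=\b$ case.

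The only mild subtlety — and the place to be careful in the write-up — is the deduction "$\a=\b\Rightarrow a=b$": it uses that $p$ is prime and $1\le n\le p-1$, and it is exactly for the ensuing case that the hypothesis $p>2$ is invoked (when $p=2$ and $a=b$ we would be forced to $a=b=1$, $\a=\tfrac12$, and indeed $\floor{\a l}=\floor{(1-\a)l}$ can fail to differ — e.g. $l$ odd gives both floors equal to $(l-1)/2$ — so the statement genuinely needs $p>2$ there). So the proof is: reduce to the two cases $\a\ne\b$ and $\a=b=\b$; in the first use $p<l$ on the term $\floor{|\a-\b|l}$; in the second use $p<l$ together with $p>2$ on the term $\bigl|\floor{\a l}-\floor{(1-\b)l}\bigr|$ via the gap estimate $|1-2\a|\ge 1/p$. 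No step here is a real obstacle; it is a short arithmetic argument and the main thing is to organize the case distinction cleanly and flag where each hypothesis is used.
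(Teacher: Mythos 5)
Your proof is correct and follows essentially the same route as the paper: when $a\ne b$ (equivalently $\a\ne\b$) the hypothesis $p<l$ forces $\floor{|\a-\b|l}\ge 1$, and when $a=b$ the hypothesis $p>2$ gives $\a\ne 1-\a$, hence $|2\a-1|l\ge l/p>1$ and the two floors $\floor{\a l}$, $\floor{(1-\b)l}$ differ. The only difference is expository — your draft briefly entertains and then correctly retracts a "uniform" argument — but the final case analysis coincides with the paper's.
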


\begin{proof}
If $a\ne b$, then $\floor{|\a-\b|l} \ge \floor{\frac{l}{p}} \ge 1$. 
If $a=b$, then $\a \ne 1-\a$ since $p>2$, and hence 
$|\floor{\a l}-\floor{(1-\a)l}| \ge 1$. 
\end{proof}

\begin{ppn}\label{hodge number 0}
For each $n=1,\dots, p-1$, we have
$$
\dim \Gr_F^0(H_\dR^2(X)/\angle{Z})^{(n)}
=\min\{\floor{(1-\a)l}, \floor{\b l}\}-\max\{0, \floor{(\b-\a)l}\}.$$
\end{ppn}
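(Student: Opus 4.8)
The plan is to obtain the formula from the exact sequence \eqref{key sequence} together with Propositions \ref{F^2 basis} and \ref{hodge number 1}, the only genuinely new ingredient being the total dimension $\dim (H_\dR^2(X)/\angle{Z})^{(n)}$. Since $X$ is smooth and projective, $H^2(X)$ is pure of weight $2$; as $\angle{Z}\subset H^2(X)$ is a sub-Hodge structure, the quotient $H^2(X)/\angle{Z}$ is again pure of weight $2$, so that $F^0(H_\dR^2(X)/\angle{Z})^{(n)}=(H_\dR^2(X)/\angle{Z})^{(n)}$ and $F^3=0$. Hence
$$\dim\Gr_F^0(H_\dR^2(X)/\angle{Z})^{(n)}=\dim(H_\dR^2(X)/\angle{Z})^{(n)}-\dim F^2(H_\dR^2(X)/\angle{Z})^{(n)}-\dim\Gr_F^1(H_\dR^2(X)/\angle{Z})^{(n)},$$
and it suffices to compute the total dimension.

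To that end I would first establish $\dim H^1(\P^1,\sE^{(n)})=2l$. Via the isomorphism $H^1(\P^1,\sE^{(n)})\simeq H^1(T,\sH^{(n)})$, the sheaf $\sH^{(n)}$ is a rank-$2$ local system on the affine curve $T=\P^1\setminus D$ with $\chi(T)=2-|D|=-l$, so $\chi(T,\sH^{(n)})=-2l$; moreover $H^2(T,\sH^{(n)})=0$ since $T$ is affine, and $H^0(T,\sH^{(n)})=0$ because by Proposition \ref{can ext} the local monodromy of $H^1(X_t,\C)^{(n)}$ at $\infty$ has no eigenvalue $1$ (exactly as in the proof of Proposition \ref{H^1=0}), whence $\dim H^1(\P^1,\sE^{(n)})=2l$. (Alternatively this is the sum of the three Hodge numbers $i+1+l$, $l+k-i$ and $-k-1$ of $H^1(\P^1,\sE^{(n)})$ computed in the proofs of Propositions \ref{F^2 basis} and \ref{hodge number 1}.) Now \eqref{key sequence} and Proposition \ref{N}, in which $(\sH_{e,0}/N_0)^{(n)}$ and each of the $l$ spaces $(\sH_{e,\z}/N_\z)^{(n)}$ ($\z\in\mu_l$) is one-dimensional, give $\dim(H_\dR^2(X)/\angle{Z})^{(n)}=2l-(1+l)=l-1$.

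The remaining step is elementary. Substituting the formulas of Propositions \ref{F^2 basis} and \ref{hodge number 1} into $(l-1)-\dim F^2-\dim\Gr_F^1$ and using $\min\{x,y\}+|x-y|=\max\{x,y\}$ to absorb the two terms involving $\floor{\a l}$ and $\floor{(1-\b)l}$, one is reduced to
$$(l-1)-\max\{\floor{\a l},\floor{(1-\b)l}\}+\bigl(\max\{0,\floor{(\a-\b)l}\}-\floor{|\a-\b|l}\bigr).$$
Because $0<a,b<p$, $1\le n\le p-1$ and $l\ne p$, the numbers $\a l=nal/p$ and $\b l=nbl/p$ are not integers, so $\floor{\a l}+\floor{(1-\a)l}=l-1$ and $\floor{\b l}+\floor{(1-\b)l}=l-1$; hence $(l-1)-\max\{\floor{\a l},\floor{(1-\b)l}\}=\min\{\floor{(1-\a)l},\floor{\b l}\}$. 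A short case distinction ($\a>\b$, $\a<\b$, $\a=\b$) shows $\max\{0,\floor{(\a-\b)l}\}-\floor{|\a-\b|l}=-\max\{0,\floor{(\b-\a)l}\}$, and combining these yields the asserted formula. The only step that is not purely formal is the identity $\dim H^1(\P^1,\sE^{(n)})=2l$ (equivalently $\dim(H_\dR^2(X)/\angle{Z})^{(n)}=l-1$), for which the vanishing of $H^0$ and $H^2$ of $\sH^{(n)}$ over $T$ is essential; everything else is bookkeeping with floor functions.
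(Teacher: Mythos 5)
Your proof is correct, but it reaches the formula by a genuinely different route from the paper. The paper identifies $\Gr_F^0(H_\dR^2(X)/\angle{Z})^{(n)}$ directly with $H^1(\P^1,\Gr_F^0\sH_e^{(n)})=H^1(\P^1,\sO(k))$ — using \eqref{gr} together with Corollary \ref{res F^2} and Lemma \ref{res gr^1}, which show that the residue target in \eqref{key sequence} is already exhausted by $F^2$ and $\Gr_F^1$ — and then gets $-k-1$ by Serre duality on $\P^1$; this computes the $(0,2)$-number intrinsically, independently of the other two Hodge numbers, and the total dimension $l-1$ is only deduced afterwards (in the proof of Theorem \ref{thm 0}) by summing all three. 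You reverse the logic: you establish $\dim H^1(T,\sH^{(n)})=2l$ by a purely topological Euler-characteristic count for the rank-$2$ local system on $T$ (with the $H^0$-vanishing borrowed from the monodromy-at-$\infty$ argument of Proposition \ref{H^1=0} and $H^2=0$ by affineness), subtract the $l+1$ residue dimensions from Proposition \ref{N} to get $l-1$, and then subtract the two known Hodge numbers. This is sound and arguably more robust on the global side, at the price of depending on both Propositions \ref{F^2 basis} and \ref{hodge number 1}; it is also close in spirit to the paper's own Remark that the proposition is equivalent to the dimension formula of Proposition \ref{F^2 basis} via $n\leftrightarrow p-n$. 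Two small caveats: your parenthetical ``alternative'' derivation of $2l$ as the sum of the three Hodge numbers is circular, since the summand $-k-1$ is exactly what the paper's proof of this proposition establishes (so keep the Euler-characteristic argument as the actual justification); and $\a l$ equals $(na \bmod p)\,l/p$ rather than $nal/p$, though the non-integrality you need holds either way.
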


\begin{proof}
By \eqref{gr}, Corollary \ref{res F^2} and Lemma \ref{res gr^1}, we have
$$\Gr_F^0(H_\dR^2(X)/\angle{Z})^{(n)}=H^1(\P^1,\Gr_F^0\sH_e^{(n)})=H^1(\P^1,\sO(k))$$
where $k$ is as in the proof of Proposition \ref{hodge number 1}.
Since $k<0$, we have
$$\dim H^1(\P^1,\sO(k)) = \dim H^0(\P^1,\sO(-k-2))=-k-1.$$
Hence the proposition follows. 
\end{proof}

\begin{rmk}
In fact, Proposition \ref{hodge number 0} is equivalent to the dimension formula in Proposition \ref{F^2 basis}. 
Note that the complex conjugation switches $n$ (resp. $\a$, $\b$) and $p-n$ (resp. $1-\a$, $1-\b$). 
\end{rmk}

\begin{thm}\label{thm 0}
The de Rham--Hodge structure $H=(H^2(X)/\angle{Z})\ot_R K$ has CM by $K$, i.e. $\dim_K H_B=1$. 
\end{thm}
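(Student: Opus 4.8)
The plan is to show that the Betti realization $H_B$ is $1$-dimensional over $K=\Q(\mu_{lp})$ by comparing dimensions over $\Q$. Since $K$ has degree $(l-1)(p-1)$ over $\Q$ and $H=(H^2(X)/\angle Z)\otimes_R K$, it suffices to prove that $\dim_\Q\Coker(H_Z^2(X,\Q)\to H^2(X,\Q))$, once we pass to the part where both $\s$ and $\tau$ act without the trivial eigenvalue, is exactly $(l-1)(p-1)$; equivalently, that each eigen-component $H^{(m,n)}$ with $m\ne0$, $n\ne0$ is $1$-dimensional over $\ol\Q$ (for $H_B\otimes\ol\Q$). Because we have already computed the Hodge numbers of each $\s$-eigen-component $(H^2_\dR(X)/\angle Z)^{(n)}$ in Propositions \ref{F^2 basis}, \ref{hodge number 1}, \ref{hodge number 0}, the natural route is: first add these three dimensions to get $\dim(H^2_\dR(X)/\angle Z)^{(n)}$ as a function of $\a,\b,l$; then decompose further under the $\tau$-action into the pieces $H^{(m,n)}$, $m\in\Z/l\Z$; then show that the total dimension equals $l-1$ and that the trivial $\tau$-eigenspace $H^{(0,n)}$ vanishes, forcing every $H^{(m,n)}$ with $m\ne0$ to be exactly $1$-dimensional over $\ol\Q$.

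Concretely, first I would sum the three Hodge numbers. Writing $h^2=\min\{\floor{\a l},\floor{(1-\b)l}\}-\max\{0,\floor{(\a-\b)l}\}$, $h^0=\min\{\floor{(1-\a)l},\floor{\b l}\}-\max\{0,\floor{(\b-\a)l}\}$, and $h^1=|\floor{\a l}-\floor{(1-\b)l}|+\floor{|\a-\b|l}$, a short case analysis on the signs of $\a-\b$ and $\floor{\a l}-\floor{(1-\b)l}$ collapses the minima/maxima and yields $h^0+h^1+h^2=l-1$ in every case, using the elementary identity $\floor{x l}+\floor{(1-x)l}=l-1$ for $x\notin\frac1l\Z$ (which holds here since $\a,\b,1-\b\ge 1/p$ and $l\ne p$, so none of $\a l,\b l,(1-\a)l,(1-\b)l$ is an integer). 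Thus $\dim_{\ol\Q}(H^2_\dR(X)/\angle Z)^{(n)}=l-1$ for each $n=1,\dots,p-1$, hence $\dim_\Q(H^2(X)/\angle Z)$ restricted to $\bigoplus_{n\ne0}$ is $(l-1)(p-1)$.

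Next I would use the $\tau$-action. Since $\tau$ has order $l$ (prime) and acts $\ol\Q$-linearly on $(H^2(X)/\angle Z)^{(n)}$, a space of dimension $l-1$, the cyclotomic decomposition shows that either $H^{(0,n)}=0$ and each $H^{(m,n)}$ ($m\ne0$) is $1$-dimensional, or else some $H^{(m,n)}$ has dimension $0$ while others have dimension $\ge2$ — but the eigenvalues of $\tau$ on the $\ol\Q$-span come in full Galois orbits of size $l-1$ (for $\z_l$) or size $1$ (for $1$), so the only possibilities for a Galois-stable $(l-1)$-dimensional space are $(l-1)\cdot(\text{the }\z_l\text{-orbit})$, i.e.\ exactly one copy of each nontrivial character, or $(l-1)$ copies of the trivial one. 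To rule out the latter, I would show $H^{(0,n)}=0$: the $\tau$-invariants are computed from the fibration $g\colon Y\to\P^1$ (before base change), and $\Coker(H_Z^2\to H^2)^{(0,n)}$ injects into a cohomology group of the original fibration whose relevant eigen-piece vanishes — for instance because $Y_1$ is a union of rational curves (stated in Sect. 2.1) and the $\s$-eigen-components with $n\ne0$ of the relevant cokernel over the three-punctured line have no global sections, mirroring the argument of Proposition \ref{H^1=0}. Once $H^{(0,n)}=0$, dimension count forces $\dim_{\ol\Q}H^{(m,n)}=1$ for all $m\ne0$, $n\ne0$, hence $\dim_K H_B=1$.

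The main obstacle I anticipate is the vanishing $H^{(0,n)}=0$: the Hodge-number computations of Section 4 are performed after the base change $t\mapsto t^l$ and are naturally indexed by $\s$-eigenvalues only, so extracting the $\tau$-invariant part requires descending along $Y^{(l)}\to Y$ and identifying $(H^2(X)/\angle Z)^{\tau}$ with a cohomology of $Y$ (or of $g\colon Y_T\to T$) modulo fiber classes — one must check that the desingularization $X\to Y^{(l)}$ and the classes supported on the exceptional fibers over $\mu_l$ do not contribute $\tau$-invariants, which is plausible from Corollary \ref{totdeg} (those fibers are normal crossing with rational components, so their $H^1$ is of weight $0$ and their contribution is killed in the cokernel) but needs to be spelled out. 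Everything else — the arithmetic of floor functions and the Galois-orbit argument for the $\tau$-decomposition — is routine.
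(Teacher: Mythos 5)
Your first step --- summing the three Hodge numbers from Propositions \ref{F^2 basis}, \ref{hodge number 1} and \ref{hodge number 0} to get $\dim(H_\dR^2(X)/\angle{Z})^{(n)}=l-1$ for each $n=1,\dots,p-1$ --- is exactly the paper's first step, and your floor-function bookkeeping is the verification the paper leaves to the reader. The divergence, and the gap, lies in how you pass from the resulting bound $\dim_\Q H_B\le(l-1)(p-1)=[K:\Q]$ to $\dim_K H_B=1$. The paper's route is short: $H_B$ is a vector space over the \emph{field} $K$, hence free, so $\dim_K H_B\le 1$ is automatic (your Galois-orbit analysis is a detour around this, and note that it anyway has to be run over $\Q(\z_p)$ rather than $\Q$, since the $\s$-eigenspaces are only defined there); it then remains only to exhibit one nonzero class in some $H^{(m,n)}$ with $m\ne0$, $n\ne0$. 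Such classes are already available: the bases of Propositions \ref{F^2 basis} and \ref{F^1 basis} consist of forms $\o_{m,n}$ with $m\in I_n^2$ resp.\ $I_n^1$, and these index sets never contain $0$ and lie in $(-l,l)$, so every basis element is a $\tau$-eigenvector with nontrivial eigenvalue; since $|I_n^1|+|I_{p-n}^1|=(l-1)+h^1\ge 1$, some $I_n^1$ is nonempty, giving $H\ne0$.

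You instead propose to prove the strictly stronger statement $H^{(0,n)}=0$ for every $n$ by descending along $Y^{(l)}\to Y$, and you correctly flag this as the unproved step. It is a genuine gap: ``mirroring Proposition \ref{H^1=0}'' does not do the job, because that proposition concerns $H^0(T,R^1f_*\Q)$ and weight-$1$ classes, whereas the $\tau$-invariant part sits inside $H^1(T,\sH^{(n)})$ for the original fibration $g\colon Y\to\P^1$, a $2$-dimensional space (Euler characteristic of an irreducible rank-$2$ local system on the thrice-punctured line). Killing it would require showing that the residue map to $\bigoplus_{s\in\{0,1,\infty\}}\sH_{e,s}/N_s$ is injective for $g$, i.e., redoing the canonical-extension and residue analysis of Sections 3--4 for $g$ rather than $f$, and in addition checking that the exceptional divisors of the desingularization $X\to Y^{(l)}$ over $\mu_l$ contribute no $\tau$-invariants. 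None of this is routine, and --- more to the point --- none of it is needed for the theorem.
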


\begin{proof}
Combining Propositions \ref{F^2 basis}, \ref{hodge number 1} and \ref{hodge number 0}, 
one verifies that
$$\dim (H_\dR^2(X)/\angle{Z})^{(n)}=l-1$$ 
for each $n=1,\dots, p-1$. 
It follows that
$$\dim_\Q H_B \le (l-1)(p-1)=[K:\Q].$$
It remains to show that $H \ne 0$, for which it suffices to show that $\tau$ is not identical on $H_\dR^2(X)/\angle{Z}$. If $p<l$, this follows from Proposition \ref{F^2 basis} and Corollary \ref{F^2 non-trivial}. The general case follows from Proposition \ref{F^1 basis} below. 
\end{proof}


\section{Periods}

We compute the periods of our $H$ and verify the Gross--Deligne conjecture, for which it will suffice to consider $F^1H_\dR$.  

\subsection{Basis of $F^1H_\dR$}

Recall that by \eqref{key sequence}, we can identify $F^1(H_\dR^2(X)/\angle Z)^{(n)}$ with the elements of 
$F^1H^1(\P^1,\sE^{(n)})$ having trivial residues.  
Furthermore, they are identified with rational $2$-forms by the following lemma. 
Put $T_1=\P^1 \setminus\{0,\infty\}$. 

\begin{lem}\label{iota}
For each $n=1,\dots, p-1$, there is a natural injection
\begin{equation*}
\i \colon F^1(H_\dR^2(X)/\angle Z)^{(n)} \hra \G(T_1, \O^1(\log D)\ot F^1\sH_e^{(n)}).
\end{equation*}
\end{lem}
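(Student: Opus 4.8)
The plan is to produce the injection $\i$ by combining the filtered description of $H^1(\P^1,\sE^{(n)})$ in \eqref{fil} with a restriction-to-$T_1$ argument. Recall that $F^1H^1(\P^1,\sE^{(n)})$ is computed by the two-term complex $[F^1\sH_e^{(n)} \os{\nb}{\ra} \O^1(\log D)\ot \sH_e^{(n)}]$, and that $F^1(H_\dR^2(X)/\angle Z)^{(n)}$ is, by \eqref{key sequence}, the subspace of those classes whose residues at all the points of $D$ vanish. The hypercohomology of a two-term complex $[\mathcal F \os{d}{\ra} \mathcal G]$ on $\P^1$ sits in an exact sequence $0 \to \Coker(H^0\mathcal F \to H^0\mathcal G) \to \mathbb H^1 \to \Ker(H^1\mathcal F \to H^1\mathcal G) \to 0$; I would first show that for our complex the $H^1$-part contributes nothing to $F^1$, so that $F^1H^1(\P^1,\sE^{(n)})$ is exactly the cokernel of $H^0(\P^1, F^1\sH_e^{(n)}) \os{\ol\nb}{\ra} H^0(\P^1,\O^1(\log D)\ot \sH_e^{(n)})$. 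This uses Corollary \ref{can ext fil}: $F^1\sH_e^{(n)}=\sO(i)t^j\o_n$ with $0\le i<l$, so $H^1(\P^1, F^1\sH_e^{(n)})=0$.

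Next I would construct $\i$ on the level of global sections. Given a class in $F^1(H_\dR^2(X)/\angle Z)^{(n)}$, lift it to an element $\eta \in H^0(\P^1,\O^1(\log D)\ot \sH_e^{(n)})$. The assumption that the class has trivial residue at every point of $D$, combined with Proposition \ref{N} (which computes $N_0^{(n)}$, $N_\infty^{(n)}=\sH_{e,\infty}$, and $N_\z^{(n)}=\angle{\y_n}$), lets me adjust $\eta$ by an element of $\ol\nb(H^0(\P^1,F^1\sH_e^{(n)}))$ so that, after restriction to $T_1=\P^1\setminus\{0,\infty\}$, the representative lies in $\G(T_1,\O^1(\log D)\ot F^1\sH_e^{(n)})$ rather than only in $\O^1(\log D)\ot\sH_e^{(n)}$. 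Concretely: over $T_1$ the bad locus is only $\mu_l$, where $F^1\sH_e^{(n)}=\angle{\o_n}$ and the residue of $\eta$ lies in $N_\z^{(n)}=\angle{\y_n}$; the triviality-of-residue condition forces this residue to vanish, and one checks the remaining $\y_n$-component can be absorbed, so the representative is $F^1$-valued on $T_1$. This produces the map $\i$; it is $\ol\Q$-linear by construction.

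Finally I would prove injectivity. If $\i$ sends a class to $0$, then a representative $\eta$ restricts to $0$ in $\G(T_1,\O^1(\log D)\ot F^1\sH_e^{(n)})$, hence $\eta$ is supported at $\{0,\infty\}$; since $\eta$ is a global logarithmic form it must then be a $\ol\Q$-combination of $\frac{dt}{t}\ot(\text{section of }\sH_e^{(n)})$, and tracking through \eqref{fil} and Proposition \ref{N} shows such a form represents $0$ in $F^1(H_\dR^2(X)/\angle Z)^{(n)}$ precisely because its residues at $0$ and $\infty$ — which are what survives — have been required to vanish, i.e. $\eta \in \ol\nb(H^0(F^1\sH_e^{(n)}))$. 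The main obstacle I expect is the bookkeeping at $t=0$, where $F^1\sH_e^{(n)}$ has the two distinct shapes $\sO(\cdot)\o_n$ or $\sO(\cdot)t^{\ceil{(\a-\b)l}}\o_n$ depending on whether $\a\le\b$ or $\a>\b$ (Corollary \ref{can ext fil}(i)), and one must check in the case $\a>\b$ that the residue condition at $0$ — together with Lemma \ref{res 0}, which says $t^m\o_n|_{t=0}\in N_0^{(n)}$ for $m\ge\ceil{(\a-\b)l}$ — still allows the representative to be chosen with only a pole of the permitted order; the other points of $D$ are comparatively easy since $N_\infty^{(n)}$ is everything and $N_\z^{(n)}=\angle{\y_n}$ is complementary to $F^1\sH_{e,\z}^{(n)}=\angle{\o_n}$.
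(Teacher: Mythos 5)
Your overall strategy --- reduce via \eqref{fil} and \eqref{key sequence} to $H^1(\P^1,F^1\sE^{(n)})$ and then produce a distinguished $F^1$-valued rational representative over $T_1$ --- is the right one and is in spirit the paper's, but step 2 of your plan has a genuine gap. You propose to take a global lift $\eta\in H^0(\P^1,\O^1(\log D)\ot\sH_e^{(n)})$ and correct it by $\nb$ of a \emph{global} section of $F^1\sH_e^{(n)}$ over $\P^1$ so that the result becomes $F^1$-valued after restriction to $T_1$. This amounts to asking that the image $\ol\eta$ of $\eta$ in $H^0(\P^1,\O^1(\log D)\ot\Gr^0_F\sH_e^{(n)})$ lie in $\ol\nb\bigl(H^0(\P^1,F^1\sH_e^{(n)})\bigr)$; restricting to $T_1$ buys nothing here, since two global sections of a locally free sheaf on $\P^1$ that agree on the dense open $T_1$ agree everywhere. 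But by \eqref{gr} the cokernel of $\ol\nb$ on global sections is exactly $\Gr^1_FH^1(\P^1,\sE^{(n)})$, which is nonzero in general (Proposition \ref{hodge number 1} and the corollary following it). So for any class in $F^1(H_\dR^2(X)/\angle Z)^{(n)}$ with nontrivial image in $\Gr_F^1$, the adjustment you describe is impossible, and your construction of $\i$ breaks down precisely on the part of $F^1$ that is not $F^2$. Relatedly, your claim that the residue condition at $\z\in\mu_l$ ``forces the residue of $\eta$ to vanish'' is off: it only forces $\Res_\z(\eta)\in N_\z^{(n)}=\angle{\y_n}$, and the ``absorption'' of the $\y_n$-component must happen over all of $T_1$, not just at the level of residues at $\mu_l$.

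The repair --- which is the paper's actual argument --- is to restrict the complex $F^1\sE^{(n)}$ to the affine curve $T_1$ \emph{before} adjusting, so that the correcting section $\psi$ is allowed to be a section of $F^1\sH_e^{(n)}$ over $T_1$ only (hence with poles at $0$ and $\infty$). The key input, which your sketch only gestures at, is that $\ol\nb\colon F^1\sH_e^{(n)}\to\O^1(\log D)\ot\Gr^0_F\sH_e^{(n)}$ is an isomorphism of line bundles over all of $T_1$ (from Proposition \ref{gauss-manin}, the $\y_n$-component of $\nb\o_n$ is $\frac{l(1-\a)}{1-t^l}\frac{dt}{t}\ot\y_n$, a generating section of $\O^1(\log D)\ot\Gr^0_F\sH_e^{(n)}$ away from $0,\infty$). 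Hence $[F^1\sH_e\os{\ol\nb}{\to}\O^1(\log D)\ot\Gr^0_F\sH_e]$ is acyclic on $T_1$ and one obtains $\G(T_1,\O^1(\log D)\ot F^1\sH_e^{(n)})\os{\simeq}{\to} H^1(T_1,F^1\sE^{(n)})$; the map $\i$ is then the restriction $H^1(\P^1,F^1\sE^{(n)})\to H^1(T_1,F^1\sE^{(n)})$ composed with the inverse of this isomorphism. Its injectivity follows from the injectivity of $H^1(\P^1,F^1\sE)\to H^1(\P^1,\sE)$ (strictness) together with $H^1(\P^1,\sE)\os{\simeq}{\to} H^1(T_1,\sE)$ --- a cleaner route than your final paragraph, whose assertion that a section vanishing on $T_1$ is ``supported at $\{0,\infty\}$'' does not parse for sections of a locally free sheaf.
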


\begin{proof}
By \eqref{fil} and \eqref{key sequence}, it suffices to show the existence of an injection 
$$H^1(\P^1, F^1\sE^{(n)}) \hookrightarrow \G(T_1, \O^1(\log D)\ot F^1\sH_e^{(n)})$$
where we put
$F^1\sE=[F^1\sH_e\to \O^1(\log D)\ot \sH_e]$. 
Consider the commutative diagram
$$\xymatrix{
& 0 \ar[d]\\
& \O^1(\log D) \ot F^1\sH_e^{(n)} \ar[d] \\
F^1\sH_e^{(n)} \ar[d]^{=} \ar[r]^{\nabla\phantom{AAAA}} &  \O^1(\log D) \ot \sH_e^{(n)} \ar[d] \\
F^1\sH_e^{(n)} \ar[r]^{\ol\nabla\phantom{AAAAA}} &  \O^1(\log D) \ot \Gr^0_F\sH_e^{(n)} \ar[d] \\
& 0
}$$
where the right vertical sequence is exact. 
By Proposition \ref{can ext}, $\ol\nabla$ is an isomorphism on $T_1$. 
Therefore, we have an isomorphism
$$\G(T_1, \O^1(\log D)\ot F^1\sH_e^{(n)}) \os\simeq{\longrightarrow} H^1(T_1, F^1\sE^{(n)})$$
It remains to show the injectivity of  $H^1(\P^1, F^1\sE^{(n)})\to H^1(T_1, F^1\sE^{(n)})$. 
This follows from the fact that $H^1(\P^1,F^1\sE) \to H^1(\P^1,\sE)$ is injective and 
$H^1(\P^1,\sE) \to H^1(T_1,\sE)$ is an isomorphism. 
\end{proof}

Under the identification via $\i$, we have the following. 
\begin{ppn}\label{F^1 basis}
For each $n=1,\dots, p-1$, a basis of $F^1(H_\dR^2(X)/\angle Z)^{(n)}$ is given by 
$\{\o_{m,n} \mid m \in I_n^1\}$ where
$$I_n^1:=\begin{cases}
\{-\floor{(\b-\a)l},\dots,-1\}\cup \bigl\{1,\dots, \max\{\floor{\a l}, \floor{(1-\b)l}\}\bigr\} & \text{if $\a<\b$},\\
\bigl\{1,\dots, \max\{\floor{\a l}, \floor{(1-\b)l}\}\bigr\} & \text{if $\a\ge \b$}. 
\end{cases}$$
Recall that $\a=\{\frac{na}{p}\}$, $\b=\{\frac{nb}{p}\}$.
\end{ppn}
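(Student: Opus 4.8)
The plan is to combine the description of $F^2$ already obtained in Proposition \ref{F^2 basis} with the dimension count of $\Gr^1_F$ from Proposition \ref{hodge number 1}, exactly as in the proof of Proposition \ref{F^2 basis} but now tracking which rational $2$-forms $\o_{m,n}$ survive in $F^1$ rather than in $F^2$. First I would recall from Corollary \ref{can ext fil} (i) that $F^1\sH_e^{(n)} = \sO(i)t^j\o_n$ with $(i,j)$ given by the four cases there, so that a basis of $\G(T_1,\O^1(\log D)\ot F^1\sH_e^{(n)})$ — the target of the injection $\i$ from Lemma \ref{iota} — consists of the forms $\o_{m,n}=t^m\tfrac{dt}{t}\ot\o_n$ for $m$ ranging over the integers together with the forms $t^j\tfrac{dt}{t-\z}\ot\o_n$ for $\z\in\mu_l$; over $T_1$ there is no pole constraint at $0$ and $\infty$ on the $\o_{m,n}$, only the requirement that the section of $F^1\sH_e^{(n)}$ be regular, i.e. $\o_{m,n}\in\G(T_1,\O^1(\log D)\ot F^1\sH_e^{(n)})$ automatically since $\o_{m,n}$ has log poles only at $\mu_l$ as a section over $T_1$. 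Then, as in Proposition \ref{F^2 basis}, the image of $\i$ on $F^1(H_\dR^2(X)/\angle Z)^{(n)}$ consists precisely of those linear combinations whose residues at every point of $D$ are trivial modulo the appropriate $N_t$.

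Next I would analyze the residue conditions point by point. At each $\z\in\mu_l$, by Proposition \ref{N} we have $N_\z^{(n)}=\angle{\y_n}$, and $\Res_\z(\o_{m,n})=0$ while $\Res_\z(t^j\tfrac{dt}{t-\z}\ot\o_n)=t^j\o_n\not\equiv 0\pmod{N_\z}$; hence no form of the second type can appear, and the surviving elements are $\ol\Q$-combinations of the $\o_{m,n}$ alone. At $t=\infty$, Proposition \ref{N} gives $N_\infty^{(n)}=\sH_{e,\infty}$, so the residue condition at $\infty$ is vacuous, which is what frees up the upper range of $m$ up to $\max\{\floor{\a l},\floor{(1-\b)l}\}$ rather than the minimum appearing in $I_n^2$. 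At $t=0$, I would invoke Lemma \ref{res 0}: when $\a\le\b$, $t^m\o_n|_{t=0}\in N_0^{(n)}$ for $m>0$ but $\not\in N_0^{(n)}$ for $m=0$; when $\a>\b$, $t^m\o_n|_{t=0}\in N_0^{(n)}$ already for $m\ge\ceil{(\a-\b)l}$, but for $0\le m<\ceil{(\a-\b)l}$ one must check, using Corollary \ref{local hodge fil}, that $t^m\o_n|_{t=0}\not\in N_0^{(n)}$ — which forces the lower range to start at $m=1$ in the $\a\ge\b$ case and to include the block $\{-\floor{(\b-\a)l},\dots,-1\}$ in the $\a<\b$ case (here one uses that $F^1\sH_e^{(n)}$ extends to negative powers of $t$ at $0$ exactly when $\a<\b$, via the $t^{\lceil(\a-\b)l\rceil}$ or $t^{-\lfloor(\b-\a)l\rfloor}$ shift). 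Collecting these constraints gives the candidate set $I_n^1$.

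Finally I would check that the candidate forms $\{\o_{m,n}\mid m\in I_n^1\}$ are linearly independent — immediate, since they lie in distinct $\tau$-eigenspaces as $0\le i<l$ in every case, the same observation used in Proposition \ref{hodge number 1} — and that their count matches $\dim F^1(H_\dR^2(X)/\angle Z)^{(n)}$. For the latter, $\dim F^1 = \dim F^2 + \dim\Gr^1_F$, and I would verify by a direct (case-by-case) arithmetic check that
$$|I_n^1| = |I_n^2| + \bigl(\bigl|\floor{\a l}-\floor{(1-\b)l}\bigr| + \floor{|\a-\b|l}\bigr),$$
using Propositions \ref{F^2 basis} and \ref{hodge number 1}, splitting into the cases $\a<\b$, $\a>\b$, $\a=\b$ and, within each, according to whether $\floor{\a l}\ge\floor{(1-\b)l}$. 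The main obstacle I anticipate is precisely this bookkeeping at $t=0$ in the case $\a>\b$: one must be careful that the forms $\o_{m,n}$ with $1\le m<\ceil{(\a-\b)l}$ genuinely have nontrivial residue at $0$ (so they do \emph{not} lie in $F^1$ coming from $\angle Z$, i.e. they are the ones that do survive into $H_\dR^2(X)/\angle Z$), matching the extra dimension carried by $\Res_0$ in Lemma \ref{res gr^1} (ii), and that the floor/ceiling identities line up so that $I_n^1$ is exactly this set with no off-by-one error. Once the $t=0$ analysis is pinned down, the rest is routine.
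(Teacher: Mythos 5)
Your overall architecture (count dimensions via Propositions \ref{F^2 basis} and \ref{hodge number 1}, then exhibit the right number of forms lying in the space) matches the paper's, and the dimension bookkeeping $|I_n^1|=|I_n^2|+\dim\Gr_F^1$ is indeed the paper's first step. But the core of your argument --- that the image of $\i$ inside $\G(T_1,\O^1(\log D)\ot F^1\sH_e^{(n)})$ is cut out by residue conditions modulo $N_t$ at the points of $D$ --- is not correct, and this is where the real work of the proposition lives. The conditions ``$\Res_t \equiv 0 \pmod{N_t}$'' only govern whether a class already known to lie in $H^1(\P^1,\sE^{(n)})$ descends to $(H^2_\dR(X)/\angle Z)^{(n)}$ via the sequence \eqref{key sequence}. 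They say nothing about whether the class of $\o_{m,n}$ in $H^1(T_1,F^1\sE^{(n)})\simeq\G(T_1,\O^1(\log D)\ot F^1\sH_e^{(n)})$ extends to a class in $H^1(\P^1,F^1\sE^{(n)})$ at all. Your criterion visibly fails a sanity check: since $N_\infty^{(n)}=\sH_{e,\infty}$ the condition at $\infty$ is vacuous, and for $\a\le\b$ Lemma \ref{res 0} puts $t^m\o_n|_{t=0}$ in $N_0^{(n)}$ for every $m>0$, so your test would admit $\o_{m,n}$ for all $m>0$, contradicting finite-dimensionality. The upper bound $\max\{\floor{\a l},\floor{(1-\b)l}\}$ is not produced by any residue computation; it is an extendability constraint at $\infty$.

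The missing idea is the explicit \v{C}ech construction on the cover $\P^1=U_0\cup U_\infty$: an element of $F^1H^1(\P^1,\sE^{(n)})$ is a triple $(\psi,\vphi_0,\vphi_\infty)$ with $\psi\in\G(T_1,F^1\sH_e^{(n)})$, $\vphi_t\in\G(U_t,\O^1(\log D)\ot\sH_e^{(n)})$ (sections of the \emph{full} $\sH_e$, not of $F^1\sH_e$) and $\nabla\psi=\vphi_0-\vphi_\infty$. One must split $l^{-1}\nabla(t^m\o_n)$, computed from Proposition \ref{gauss-manin}, into a part regular at $0$ and a part regular at $\infty$ with respect to the local bases of Corollary \ref{local hodge fil}; this is what forces $m\le\floor{\a l}$ or $m\le\floor{(1-\b)l}$ at $\infty$ and produces the negative block when $\a<\b$. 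On that last point your mechanism is also wrong: for $\a\le\b$ Corollary \ref{can ext fil} gives $F^1\sH_e^{(n)}=\sO(i)\o_n$ with $j=0$, so $\o_{m,n}$ with $m<0$ is \emph{not} a section of $\O^1(\log D)\ot F^1\sH_e^{(n)}$ near $0$; it enters $F^1$ because $\vphi_0=(1-\b)\o_{m,n}-\frac{1-\a}{1-t^l}\y_{m,n}$ is regular at $0$ thanks to the basis vector $t^{-\floor{(\b-\a)l}}((1-\b)\o_n-(1-\a)\y_n)$ of $\sH_{e}^{(n)}|_0$, which absorbs a pole of order up to $\floor{(\b-\a)l}$. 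Only after such a cocycle is built (and its residues killed, e.g.\ via Lemma \ref{kill residue}) does the residue analysis you describe become relevant.
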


\begin{proof}
It is a routine to verify that $|I_n^1|=\dim F^1(H_\dR^2(X)/\angle Z)^{(n)}$ using Propositions \ref{F^2 basis} and \ref{hodge number 1}. 
Therefore, it suffices to show that $\o_{m,n} \in F^1(H_\dR^2(X)/\angle Z)^{(n)}$ if $m \in I_n^1$. 
We construct \v{C}ech cocycles representing elements of $H^1(\P^1,F^1\sE^{(n)})$ with trivial residues which correspond to $\o_{m,n}$. 
Take a covering
$\P^1=U_0\cup U_\infty$ where $U_0:=\P^1\setminus\{\infty\}$, $U_\infty:=\P^1\setminus\{0\}$; note that $T_1=U_0\cap U_\infty$. A \v{C}ech cocycle in this case is a triple 
$$(\psi, \vphi_0, \vphi_\infty) \in \G(T_1, F^1\sH_e^{(n)}) \oplus \bigoplus_{t=0,\infty} \G(U_t,\O^1(\log D) \ot \sH_e^{(n)})$$
satisfying
$\nabla\psi=\vphi_0|_{T_1}-\vphi_\infty|_{T_1}.$
We construct such cocycles in four ways. 
By Proposition \ref{gauss-manin}, we have
\begin{align}
&l^{-1}\nabla(t^m\o_n)\notag\\
&=(\m-1+\b)\o_{m,n}+\frac{1-\a}{1-t^l} \y_{m,n}\label{cech1}\\
&=\left(\m-\a-\frac{1-\a-\b}{1-t^l}\right)\o_{m,n}+\frac{t^l}{1-t^l}\left((1-\a-\b)\o_{m,n}+\frac{1-\a}{t^l}\y_{m,n}\right)\label{cech2}\\
&=\left(\m+(1-\b)\frac{t^l}{1-t^l}\right) \o_{m,n} -\frac{1}{1-t^l}\left((1-\b)\o_{m,n}-(1-\a)\y_{m,n}\right)\label{cech3}\\
&=\left(\m-\a+\b+(1-\a)\frac{1-t^l}{1-t^l}\right)\o_{m,n}-\frac{1-\a}{1-t^l}(\o_{m,n}-\y_{m,n})\label{cech4}.
\end{align}
Put
$$j=\max\{0,\ceil{(\a-\b)l}\}, \quad k=\min\{\floor{\a l},\floor{(1-\b)l}\}.$$
(i) Suppose that $\floor{\a l}\ge \floor{(1-\b)l}$. 
Let $\psi=l^{-1}t^m\o_n$, 
$$\vphi_0=(\m-1+\b)\o_{m,n}, \ 
\vphi_\infty=-\frac{1-\a}{1-t^l}\y_{m,n}.$$
By \eqref{cech1} and Corollary \ref{local hodge fil}, these define a cocycle if $j \le m \le \floor{\a l}$. By Proposition \ref{N}, it has no residues unless $m=0$, hence defines an element of $F^1(H_\dR^2(X)/\angle Z)^{(n)}$ if 
$$j \le m \le \floor{\a l}, \ m \ne 0.$$
(ii) 
Suppose that $\floor{\a l}< \floor{(1-\b)l}$. Then, by \eqref{cech2} and Corollary \ref{local hodge fil}, $\psi=l^{-1}t^m\o_n$, 
\begin{align*}
\vphi_0 &=\left(\m-\a-\frac{1-\a-\b}{1-t^l}\right)\o_{m,n},\\ 
\vphi_\infty& =-\frac{t^l}{1-t^l}\left((1-\a-\b)\o_{m,n}+(1-\a)t^{-l}\y_{m,n}\right)
\end{align*}
define a cocycle if $j \le m \le \floor{(1-\b)l}$. 
To kill the residues, we use Lemma \ref{kill residue} below. 
Then, by letting 
$$\vphi_0=(\m-\a)\o_{m,n}, \ \vphi_\infty=(1-\a-\b)\o_{m,n}-\frac{1-\a}{1-t^l}\y_{m,n},$$
we obtain an element of $F^1(H_\dR^2(X)/\angle Z)^{(n)}$ for 
$$j \le m \le \floor{(1-\b)l}, \ m\ne 0.$$ 
(iii) Suppose that $\a\le\b$. Then, by \eqref{cech3} and Corollary \ref{local hodge fil}, 
$\psi=-l^{-1}t^m\o_n$, 
$$\vphi_0=\frac{1}{1-t^l}((1-\b)\o_{m,n}-(1-\a)\y_{m,n}), \ 
\vphi_\infty=\left(\m+(1-\b)\frac{t^l}{1-t^l}\right)\o_{m,n}$$
define a cocycle if 
$$-\floor{(\b-\a)l} \le m \le k.$$ 
If $m<0$, we can kill the residues using Lemma \ref{kill residue}, and 
$$\vphi_0=(1-\b)\o_{m,n}-\frac{1-\a}{1-t^l}\y_{m,n}, \ \vphi_\infty =\m \o_{m,n}$$
define an element of $F^1(H_\dR^2(X)/\angle Z)^{(n)}$ for
$$-\floor{(\b-\a)l} \le m <0.$$ 
(iv) Finally, suppose that $\a>\b$. Then, by \eqref{cech4} and Corollary \ref{local hodge fil},  $-l^{-1}t^m\o_n$, 
$$\vphi_0=\frac{1-\a}{1-t^l}(\o_{m,n}-\y_{m,n}), \ \vphi_\infty=\left(\m-\a+\b+(1-\a)\frac{t^l}{1-t^l}\right)\o_{m,n}$$
define a cocycle if $0 \le m \le k$. If $m \ne 0$, we can use Lemma \ref{kill residue} to kill the residues and 
$$\vphi_0=(1-\a)\o_{m,n}-\frac{1-\a}{1-t^l}\y_{m,n}, \ \vphi_\infty=(\m-1+\b)\o_{m,n}$$
define an element of $F^1(H_\dR^2(X)/\angle Z)^{(n)}$ for 
$$0<m \le k.$$
Combining (iii) and (i) (or (ii)), we obtain the first case of the proposition. 
For the second case, combine (iv) and (i) (or (ii)), just noting that $k \ge j-1=\floor{(\a-\b)l}$. 
\end{proof}

\begin{lem}\label{kill residue}
If $j \le m<l$, $m \ne 0$, then
$$\frac{1}{1-t^l} \ot \o_{m,n} \in \G(\P^1, \O^1(\log D) \ot \sH_e^{(n)}),$$
and it has trivial residues at $t=0, \infty$. 
\end{lem}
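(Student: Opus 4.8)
The plan is to reduce the statement to local computations at the three types of points of $D=\{0,\infty\}\cup\mu_l$, using the explicit local frames of $\sH_e^{(n)}$ given in Corollary \ref{local hodge fil} (a restatement of Proposition \ref{can ext}). Write the form in question as $\Phi:=\frac{1}{1-t^l}\ot\o_{m,n}=\frac{t^{m-1}\,dt}{1-t^l}\ot\o_n$. It is visibly a section of $\O^1\ot\sH^{(n)}$ over $T=\P^1\setminus D$, so it only remains to check, at each point of $D$, that $\Phi$ extends to a section of $\O^1(\log D)\ot\sH_e^{(n)}$, and in addition that its residue vanishes at $0$ and at $\infty$.

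First, at $\z\in\mu_l$: there $1-t^l$ has a simple zero, so $\frac{t^{m-1}\,dt}{1-t^l}$ has a logarithmic pole, while $\o_n$ lies in the local frame $\{\o_n,\y_n\}$; hence $\Phi$ is a section of $\O^1(\log D)\ot\sH_e^{(n)}$ near $\z$ (and no residue condition is claimed at these points). Next, at $t=0$: here $m\ge j\ge 0$ and $m\ne 0$ give $m\ge 1$. If $\a\le\b$, then $\o_n$ is part of a local frame and $\frac{t^{m-1}\,dt}{1-t^l}$ is holomorphic at $0$, so $\Phi$ is holomorphic there with zero residue. If $\a>\b$, then $j=\ceil{(\a-\b)l}$ and $t^{j}\o_n$ is part of a local frame, so rewriting $\Phi=\frac{t^{\,m-1-j}\,dt}{1-t^l}\ot(t^{j}\o_n)$ and noting $m-1-j\ge-1$ exhibits $\Phi$ as a log section near $0$, whose residue is $0$ when $m>j$. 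Finally, at $t=\infty$: passing to $s=1/t$ gives $\frac{t^{m-1}\,dt}{1-t^l}=-\frac{s^{\,l-m-1}\,ds}{1-s^l}$, which is holomorphic at $s=0$ because $m<l$ forces $l-m-1\ge 0$; and by Corollary \ref{local hodge fil} one has $\o_n=s^{e}\eta_\infty$ for a local frame element $\eta_\infty$ with $e\in\{\floor{(1-\b)l},\floor{\a l}\}$ and $e\ge 0$, so $\Phi=-\frac{s^{\,l-m-1+e}\,ds}{1-s^l}\ot\eta_\infty$ is a holomorphic section of $\O^1\ot\sH_e^{(n)}$ at $\infty$, hence has zero residue there.

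The only step that is not a routine pole count is the boundary case $\a>\b$, $m=j=\ceil{(\a-\b)l}$ at $t=0$, where $\Phi$ genuinely acquires a logarithmic pole: then $\Res_0\Phi=(t^{j}\o_n)|_{0}$, and I would show this lies in $N_0^{(n)}$. For that I would combine the description $N_0^{(n)}=\langle t^{j}((1-\b)\o_n-(1-\a)\y_n)\rangle$ from Proposition \ref{N} with the relation $t^{j}\y_n|_0=t^{j}\o_n|_0$ — valid because $t^{j}(\o_n-\y_n)$ vanishes at $0$ for $j\ge 1$ — which yields $t^{j}((1-\b)\o_n-(1-\a)\y_n)|_0=(\a-\b)\,t^{j}\o_n|_0\ne 0$, a nonzero multiple of $\Res_0\Phi$. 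Thus in every case the residue at $0$ lies in $N_0^{(n)}$ (literally zero outside this boundary case) and the residue at $\infty$ is literally zero; assembling the three local checks proves the lemma.
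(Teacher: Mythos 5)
Your proof is correct and follows essentially the same route as the paper, which simply cites Corollary \ref{local hodge fil} for the local frames and Lemma \ref{res 0} for the residue at $t=0$; your inline computation in the boundary case $\a>\b$, $m=j=\ceil{(\a-\b)l}$ reproduces exactly the proof of Lemma \ref{res 0}(ii), and you correctly read ``trivial residue'' as triviality in $\sH_{e,0}^{(n)}/N_0^{(n)}$, which is the relevant notion for the sequence \eqref{key sequence}. Nothing to add.
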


\begin{proof}
This is immediate from Corollary \ref{local hodge fil} and Lemma \ref{res 0}. 
\end{proof}

\subsection{Period formula}

We prove the period formula which verifies the conjecture of Gross--Deligne \cite[Sect. 4]{gross} (but see Remark \ref{epsilon} below). 
We identify an embedding $\chi \colon K \hookrightarrow \C$ with the element $h \in(\Z/lp\Z)^\times$ such that $\chi(\z_{lp})=\z_{lp}^h$, and write $H^{(h)}$ instead of $H^\chi$. 
For each $h \in (\Z/lp\Z)^\times$, let $(p(h),2-p(h))$ be the Hodge type of $H^{(h)}$. 
Put $K'=\Q(\mu_{2lp})$ ($K=K'$ if $lp$ is odd). 
\begin{thm}\label{thm 1}
Define a function $\ve\colon \Z/lp\Z \ra \Z$ by
\begin{equation*}
\ve(i) = \begin{cases} 1 & \text{if $i\equiv lb, p, l(p-b), l(b-a)+p \pmod{lp}$},\\
-1 & \text{if $i\equiv lb+p, l(p-a)+p \pmod{lp}$}, \\
0 & \text{otherwise}.
\end{cases}
\end{equation*}
Then, for any $h \in (\Z/lp\Z)^\times$, we have
$$p(h)=\sum_{i\in \Z/lp\Z} \ve(i) \left\{-\frac{hi}{lp}\right\}$$
and
$$\Per(H^{(h)}) \sim_{K'^\times} \prod_{i\in \Z/lp\Z} \G\left(\left\{\frac{hi}{lp}\right\}\right)^{\ve(i)}.$$
\end{thm}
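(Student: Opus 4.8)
The plan is to deduce the explicit $\Gamma$-product formula and the Hodge type directly from the period computation of Proposition~\ref{2-period} together with the canonical-extension data of Section~3 and Proposition~\ref{F^1 basis}. First I would fix $h\in(\Z/lp\Z)^\times$ and locate a suitable eigen-embedding: writing $\chi(\z_p)=\z_p^n$, $\chi(\z_l)=\z_l^m$ so that $h\equiv$ (something like) $ln'+pm'$ with $n,m$ the residues, I would identify which of the basis elements $\o_{m,n}$ (or $\y_{m,n}$) from Proposition~\ref{F^1 basis} lies in $H_\dR^{(h)}$, i.e.\ for which integer $m$ with $\m=m/l$ the form $\o_{m,n}$ is a de Rham generator of the $(m\bmod l,\ n\bmod p)$-eigencomponent. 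The Betti generator is taken to be (a multiple of) the Lefschetz thimble class $(1-\s)_*\Delta_1$, whose pairing against $\o_{m,n}$ is exactly the integral $\int_{\Delta_1}\o_{m,n}$ computed in Proposition~\ref{2-period}(i), namely $-\tfrac{\ve^{p\b}}{l}B(\b,\m)B(1-\b,\b-\a+\m)$, valid since $\m>\a-\b$ in the relevant range (this inequality should be checked from $m\in I_n^1$). Hence $\Per(H^{(h)})\sim_{K'^\times}B(\b,\m)B(1-\b,\b-\a+\m)$, the power of $\ve$ being absorbed into $K'^\times$ because $\ve\in\{\pm1,\pm i\}\subset K'$; this already recovers the statement of the introductory Theorem.

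Next I would expand the two beta functions as a ratio of $\Gamma$-values,
\[
B(\b,\m)B(1-\b,\b-\a+\m)=\Gamma\!\left({\b,\ \m,\ 1-\b,\ \b-\a+\m\atop \b+\m,\ 1-\a+\m}\right),
\]
and then rewrite each of $\a,\b,\m$ and the four combinations in the numerator/denominator as fractional parts of the form $\{\tfrac{hi}{lp}\}$ for appropriate $i\in\Z/lp\Z$. The point is that $\a=\{\tfrac{na}{p}\}=\{\tfrac{h\cdot lb' }{lp}\}$-type expressions come from the exponents $a,b,c=p-b$ appearing in $y^p=x^a(1-x)^b(t^l-x)^{p-b}$ and from the base change $t\mapsto t^l$; matching the six arguments $\b,\m,1-\b,\b-\a+\m,\b+\m,1-\a+\m$ against the residues $lb,\ p,\ l(p-b),\ l(b-a)+p,\ lb+p,\ l(p-a)+p$ produces precisely the sign function $\ve(i)$ defined in the statement (numerator arguments get $\ve=+1$, denominator arguments $\ve=-1$, everything else $0$). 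This is essentially a bookkeeping identity: one verifies $\ve(lb)=\ve(p)=\ve(l(p-b))=\ve(l(b-a)+p)=1$, $\ve(lb+p)=\ve(l(p-a)+p)=-1$, and then checks that for $h$ coprime to $lp$ the fractional parts $\{\tfrac{h\cdot lb}{lp}\}=\{\tfrac{hb}{p}\}=\b$, etc., line up with the six $\Gamma$-arguments above. The formula $p(h)=\sum_i\ve(i)\{-\tfrac{hi}{lp}\}$ for the Hodge type then follows by the same substitution applied to the reflection/weight description: $H^{(h)}$ has type $(1,1)$ exactly when $\o_\dR\in F^1$, i.e.\ $H_\dR^{(h)}\subset F^1H_\dR$, which by Proposition~\ref{F^1 basis} is an explicit congruence condition on $h$; otherwise one uses that complex conjugation swaps $(m,n)\leftrightarrow(-m,-n)$, equivalently $h\mapsto -h$, together with $\Gr_F^0,\Gr_F^1,F^2$ dimension count to pin down $p(h)\in\{0,1,2\}$, and a short computation shows this equals the stated sum (note $\sum_i\ve(i)=0$ so the sum is insensitive to the ambiguity $\{-x\}$ vs $1-\{x\}$ away from integers).

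I would then observe that the Gross--Deligne conjecture is the assertion that $p(h)$ and $\Per(H^{(h)})$ are related by exactly this kind of $\Gamma$-monomial with exponents summing to zero and governed by the Hodge type, so once both displayed formulas are established with the \emph{same} function $\ve$, the conjecture is immediate; the consistency check that the period exponents $\ve(i)$ indeed reproduce the Hodge numbers (via $\sum_i\ve(i)\{-hi/lp\}=p(h)$ and the reflection formula $\Gamma(x)\Gamma(1-x)=\pi/\sin\pi x$ controlling $|\Per|$) is what makes this a proof of the conjecture rather than a mere evaluation. The main obstacle I anticipate is the matching step: correctly tracking how the base change $t\mapsto t^l$ and the precise choice of $n\in(\Z/p\Z)$, $m\in(\Z/l\Z)$ attached to $h$ interact, so that the six arguments come out as honest fractional parts $\{hi/lp\}$ with the claimed residues $i$ and not, say, with an off-by-$l$ or off-by-$p$ shift — in particular one must be careful that $\m=\{m/l\}$ rather than $\{hm/\cdots\}$, and that the range condition $\m>\a-\b$ from Proposition~\ref{2-period}(i) is met for the generator picked out by $I_n^1$; handling the parity cases $lp$ even vs odd (the role of $K'$ versus $K$ and of $\ve=i$ when $p=2$) is a secondary but necessary bit of care, and the equality of the two dimension formulas noted in the Remark after Proposition~\ref{hodge number 0} is what guarantees the Hodge-type sum is consistent on both eigencomponents $h$ and $-h$.
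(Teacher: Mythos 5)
Your overall strategy coincides with the paper's where it applies, and the $\Gamma$-bookkeeping is correct: matching $\beta,\mu,1-\beta,\{\beta-\alpha+\mu\}$ to the residues $lb,\,p,\,l(p-b),\,l(b-a)+p$ (exponent $+1$) and $\{\beta+\mu\},\{1-\alpha+\mu\}$ to $lb+p,\,l(p-a)+p$ (exponent $-1$) does reproduce $B(\beta,\mu)B(1-\beta,\beta-\alpha+\mu)$ up to $\Q^\times$. The genuine gap is your assertion that ``the range condition $\mu>\alpha-\beta$ from Proposition \ref{2-period}(i) is met for the generator picked out by $I_n^1$.'' It is not always met. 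When $\alpha>\beta$, Proposition \ref{F^1 basis} gives $I_n^1=\{1,\dots,\max\{\lfloor\alpha l\rfloor,\lfloor(1-\beta)l\rfloor\}\}$, which in general contains integers $1\le m<\lceil(\alpha-\beta)l\rceil$; for these $\beta-\alpha+\mu\le 0$, the integral $\int_{\Delta_1}\omega_{m,n}$ diverges, and Proposition \ref{2-period}(i) gives nothing. Worse, when $p(h)=0$ the component $H_\dR^{(h)}$ does not lie in $F^1$ at all, so it is not spanned by any $\omega_{m,n}$ from Proposition \ref{F^1 basis} and your choice of de Rham generator is unavailable. As written, your argument establishes the period formula only in the cases ($\alpha\le\beta$ and $p(h)\ge1$) and ($\alpha>\beta$ and $p(h)=2$) --- which is exactly where the paper's direct computation also stops.

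The missing idea for the remaining cases is a duality reduction: the cup product $H^2(X)\otimes H^2(X)\to\Q(-2)$ induces an auto-duality on $H$ pairing $H^{\chi}$ with $H^{\chi^{-1}}$ --- this requires Lemma \ref{autodual}, i.e.\ $H^2(X)_Z\otimes_RK\simeq H$, proved via Zariski's lemma --- whence $\Per(H^{(h)})\cdot\Per(H^{(-h)})\sim_{K^\times}(2\pi i)^2$. Combined with $\Gamma(x)\Gamma(1-x)\sim_{K'^\times}2\pi i$ and the substitution $(\alpha,\beta,\mu)\mapsto(1-\alpha,1-\beta,1-\mu)$, this transfers the formula from $-h$ to $h$ and covers the excluded eigencomponents. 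Complex conjugation alone, which is what you invoke, does not yield this multiplicative relation between the two periods. Two smaller slips: $\sum_i\varepsilon(i)=2$, not $0$ (consistent with $p(h)+p(-h)=2$), so the sum is sensitive to $\{-x\}$ versus $\{x\}$; and note that the same $h\mapsto-h$ duality, in the form $p(h)+p(-h)=\varphi(h)+\varphi(-h)=2$, is also what completes the Hodge-type identity once $p(h)=2\Leftrightarrow\varphi(h)=2$ has been checked against Proposition \ref{F^2 basis} --- that part of your sketch is essentially the paper's argument.
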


\begin{proof}
For real numbers $x$, $y$ with $0<x,y<1$, $x+y\neq 1$, put
$$\d(x,y):=\{-x\}+\{-y\}-\{-(x+y)\}=\begin{cases}
1 & \text{if $x+y<1$}, \\ 0 & \text{if $x+y>1$}.
\end{cases}
$$
Then we have
$$\vphi(h):= \sum_{i} \ve(i) \left\{-\frac{hi}{lp}\right\}=\d\left(\b,\mu\right)+\d\left(1-\b,\{\b-\a+\mu\}\right)$$
where we put $\a=\{ha/p\}$, $\b=\{hb/p\}$, $\mu=\{h/l\}$.  
Firstly, we have $\vphi(h)=2$ if and only if
$$\b+\mu<1, \ 1-\b+\{\b-\a+\mu\}<1.$$
Letting $m=l\mu$, the first condition becomes $m <(1-\b)l$, i.e. $m \le \floor{(1-\b)l}$. 
Similarly, the second condition is equivalent to
$$(\a\le \b, \ m<\a l) \quad \text{or} \quad (\a>\b, \ (\a-\b)l <m<\a l).$$
Comparing with Proposition \ref{F^2 basis}, we have $p(h)=2$ if and only if $\vphi(h)=2$. 
Secondly, since 
$$p(h)+p(-h)=\vphi(h)+\vphi(-h)=2,$$ 
we have $p(h)=0$ if and only of $\vphi(h)=0$. 
Since $p(h), \vphi(h) \in \{0,1,2\}$, we have $p(h)=\vphi(h)$ for any $h$. 

For the second statement, we compute the periods over the $2$-cycle 
$$(1-\tau)_*(1-\s)_*\D_1.$$ 
Since $(1-\z_l)(1-\z_p)$ is invertible in $K$, it reduces to the periods over $\D_1$ (Proposition \ref{2-period} (i)). 
Consider firstly the two cases:
\begin{enumerate}
\item $\a\le\b$ and $p(h) \ge 1$, 
\item $\a>\b$ and $p(h)=2$. 
\end{enumerate}
In both cases, by Propositions \ref{F^2 basis} and \ref{F^1 basis}, $H^{(h)}$ is generated by $\o_{m,n}$ satisfying $\ceil{(\a-\b)l} \le m$, which is equivalent to $\a-\b < \mu:=m/l$. 
This is the assumption of Proposition \ref{2-period} (i) and we obtain the desired formula. 

The other cases are reduced to the above ones. If we replace $\chi$ with $\chi^{-1}$, then $h$ (resp. $\a$, $\b$, $p(h)$) is replaced with $-h$ (resp. $1-\a$, $1-\b$, $2-p(h)$). 
By Lemma \ref{autodual} below, the cup-product $H^2(X) \ot H^2(X) \to \Q(-2)$ induces an auto-duality on $H$,  
under which $H^\chi$ is dual to $H^{\chi^{-1}}$. 
Hence we have
$$\Per(H^{(h)})\cdot \Per(H^{(-h)}) \sim_{K^\times} (2 \pi i)^2.$$
On the other hand, recall
$$\G(x)\G(1-x) = \frac{\pi}{\sin \pi x} \sim_{K'^\times} 2\pi i$$
for any $x \in \frac{1}{lp}\Z \setminus \Z$. 
Therefore, the case where $\a \le \b$ and $p(h)=0$ (resp. $\a>\b$ and $p(h) \ge 1$) is equivalent to the case (ii) (resp. (i)). 
\end{proof}

\begin{lem}\label{autodual}
Put $H^2(X)_Z = \Ker(H^2(X) \to H^2(Z))$. Then, the composition
$$H^2(X)_Z \hookrightarrow H^2(X) \twoheadrightarrow H^2(X)/\angle{Z}$$
induces an isomorphism of de Rham--Hodge structures
$H^2(X)_Z \ot_RK \simeq H$. 
\end{lem}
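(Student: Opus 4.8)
The plan is to apply the exact functor $-\ot_R K$ to the two halves of the localization sequence and to check that it kills the ``error terms'' $\angle{Z}$ and $H^2(Z)$. First I would record two formal facts. Since $\s$ and $\tau$ act with finite order on every cohomology group in sight, all the $R$-modules that occur are modules over $A:=\Q[\s,\tau]/(\s^p-1,\tau^l-1)$; as $p$ and $l$ are distinct primes, $A\cong\prod_{d\mid p,\,e\mid l}\Q(\z_{de})$, a product of number fields of which $K=\Q(\z_{lp})$ is one factor. Hence $-\ot_R K$ is projection onto the $K$-isotypic summand, in particular exact; and if $M$ is an $R$-module with $(\s-1)M=0$, then in $M\ot_R K$ one has $m\ot k=\s m\ot k=m\ot\z_p k$, so $(1-\z_p)(M\ot_R K)=0$ and $M\ot_R K=0$. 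It therefore suffices to show that $\s$ acts trivially on $\angle{Z}$ and on $H^2(Z)$.

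For this I would argue geometrically. With $D=\{0,\infty\}\cup\mu_l$ we have $Z=\bigsqcup_{t\in D}X_t$, so $H^2_Z(X)=\bigoplus_{t\in D}H^2_{X_t}(X)$ and $H^2(Z)=\bigoplus_{t\in D}H^2(X_t)$; since $\angle{Z}$ is a quotient of $H^2_Z(X)$, it is enough to see that $\s$ acts trivially on $H^2_{X_t}(X)$ and on $H^2(X_t)$ for each $t\in D$. Both groups are freely generated by the classes of the irreducible components of the curve $X_t$ (for $H^2_{X_t}(X)$ by purity for divisors on the smooth surface $X$, for $H^2(X_t)$ by the degree maps), and $\s$ acts on them by permuting those components. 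So everything reduces to the assertion that every irreducible component of every $X_t$, $t\in D$, is $\s$-stable. The components dominating the fibre $Y_{t^l}$ are $\s$-stable because $\s$ acts on $Y$ over $\P^1$ and preserves each irreducible component of $Y_{t^l}$. The remaining, exceptional, components arise from the desingularization $X\to Y^{(l)}$; since $\s$ only multiplies the coordinate $y$ by a root of unity, its fixed locus contains the ramification locus of the $\mu_p$-cover and hence contains $\mathrm{Sing}(Y^{(l)})$, so the resolution can be performed by successively blowing up $\s$-fixed points and every exceptional divisor is $\s$-stable. Granting this, $\s$ is trivial on $\angle{Z}$ and on $H^2(Z)$, whence $\angle{Z}\ot_R K=0$ and $H^2(Z)\ot_R K=0$.

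It then remains to chase the diagram. Applying $-\ot_R K$ to the exact sequences of de Rham--Hodge structures
$$0\to H^2(X)_Z\to H^2(X)\to H^2(Z),\qquad H^2_Z(X)\to H^2(X)\to H^2(X)/\angle{Z}\to 0,$$
exactness together with the two vanishings gives isomorphisms $H^2(X)_Z\ot_R K\xrightarrow{\sim}H^2(X)\ot_R K$ (from the inclusion) and $H^2(X)\ot_R K\xrightarrow{\sim}(H^2(X)/\angle{Z})\ot_R K=H$ (from the projection), compatibly on the de Rham and Betti sides; the map in the statement is their composite, hence an isomorphism of de Rham--Hodge structures. Composing further with Poincaré duality $H^2(X)\ot H^2(X)\to\Q(-2)$, under which $\angle{Z}$ and $H^2(X)_Z$ are mutually orthogonal (the Gysin sequence being dual to the restriction sequence), one obtains the perfect pairing $H\ot H\to\Q(-2)$ exchanging $H^\chi$ and $H^{\chi^{-1}}$ that is used in the proof of Theorem~\ref{thm 1}. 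I expect the only real difficulty to be the geometric step -- arranging the desingularization $X\to Y^{(l)}$ so that every exceptional component of every bad fibre is $\s$-stable; the rest is exactness of the isotypic-projection functor and formal bookkeeping in the localization sequence.
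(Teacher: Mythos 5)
Your reduction to the two vanishings $\angle{Z}\ot_RK=0$ and $H^2(Z)\ot_RK=0$ is formally fine (the decomposition of $\Q[\s,\tau]/(\s^p-1,\tau^l-1)$ into number fields, exactness of the $K$-isotypic projection, and the observation that a module with trivial $\s$-action dies under $\ot_RK$ are all correct), but it is \emph{not} the paper's argument, and the geometric step you yourself flag as ``the only real difficulty'' is a genuine gap. Your two vanishings are equivalent to the assertion that no irreducible component of any bad fibre has a free $\mu_p\times\mu_l$-orbit, and your proof of this rests on (a) the unproven claim that $\s$ preserves each component of $Y_1$, $Y_0$, $Y_\infty$ (this needs $\gcd(a,p)=\gcd(b,p)=1$ and an irreducibility computation you do not carry out), and (b) the sketchy claim that the resolution $X\to Y^{(l)}$ can be realized by blow-ups at $\s$-fixed points with every exceptional component $\s$-stable. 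Point (b) is not a property the paper imposes on $X$: Section 2.1 only fixes \emph{some} desingularization to which $\s$ and $\tau$ extend. Indeed, your intermediate claims are resolution-dependent and can fail -- blowing up a free $\mu_p\times\mu_l$-orbit of $lp$ points on $Z$ yields an $X$ for which $H^2(Z)\ot_RK\ne0$ and $\angle Z\ot_RK\ne 0$, while the lemma itself remains true for that $X$. So your strategy proves a statement strictly stronger than the lemma, which holds only for favourable resolutions, and you would additionally have to show that the lemma is insensitive to the choice of resolution before specializing to a good one.

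The paper's proof avoids all of this. The kernel of the composition $H^2(X)_Z\to H^2(X)/\angle Z$ is $H^2(X)_Z\cap\angle Z$, i.e.\ the image in $H^2(X)$ of $\Ker\bigl(H^2_Z(X)\to H^2(X)\to H^2(Z)\bigr)$; the latter composite is the intersection matrix of the components of the fibres in $Z$, and Zariski's lemma identifies its kernel with the span of the full fibre classes, whose common image in $H^2(X)$ is the one-dimensional space $\Q\,[F]$ on which $R$ acts trivially. Since $\angle Z$ and $H^2(X)_Z$ are mutual annihilators under the cup product, the cokernel $H^2(X)/(H^2(X)_Z+\angle Z)$ is dual to this same line. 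Both kernel and cokernel therefore carry the trivial $R$-action and vanish after $\ot_RK$, for \emph{any} equivariant resolution and with no analysis of how $\s$ permutes individual components. If you want to salvage your route, you must either prove (b) for the specific $X$ of Section 2.1 (or prove resolution-independence and then construct a good model), or replace the component-by-component analysis with the Zariski's-lemma/duality argument above.
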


\begin{proof}
This follows from the fact that the kernel of the composite
$$H_Z^2(X,\C) \to H^2(X,\C) \to H^2(Z,\C)$$
is one-dimensional by Zariski's lemma (cf. \cite[III, (8.2)]{barth}). 
\end{proof}

\begin{rmk}\label{epsilon}
Our definition of $\e$ is slightly different from \cite{gross}; $\e(i)$ here is $\e(-i)$ of loc. cit., 
where Gross looks at the values $\G\left(1-\{{hi}/{lp}\}\right)^{\ve(i)}$.  
The former conforms to the definition of the Stickelberger element as 
$$\sum_{h\in(\Z/N\Z)^\times} \left\{-\frac{h}{N}\right\} \s_h^{-1}$$
where $\s_h \in \mathrm{Gal}(\Q(\mu_N)/\Q)$ sends an $N$th root of unity to its $h$th power.    
\end{rmk}

\section{Regulators}

After explaining the regulator map we consider, we prove Theorem \ref{thm intro 2} from the introduction and its consequences on the non-vanishing. 

\subsection{Formulation}

The Deligne cohomology of $X_\C:=X\times_{\Spec \ol\Q} \Spec \C$ with coefficients in $\Q(2)$ is defined to be the hypercohomology of the complex 
$$\Q(2) \to \sO_{X_\C} \to \O^1_{X_\C/\C}$$
where $\Q(2):=(2\pi i)^2\Q$ is placed in degree $0$. 
Consider the Beilinson regulator map \cite{beilinson} from the motivic cohomology
$$r_\sD\colon H^3_{\sM}(X,\Q(2)) \to  H^3_{\sD}(X_\C,\Q(2)). $$
We have a natural isomorphism
$$H^3_\sD(X_\C,\Q(2))\simeq H^2(X,\C)/(F^2+H^2(X,\Q(2))),$$
and the Carlson isomorphism
$$H^2(X,\C)/(F^2+H^2(X,\Q(2))) \simeq \Ext^1_\MHS(\Q,H^2(X, \Q(2))).$$
Here, $\MHS$ denotes the abelian category of $\Q$-mixed Hodge structures. 
By Poincar\'e duality $H^2(X,\Q(2))\simeq H_2(X,\Q)$, we obtain an identification 
$$H^3_\sD(X_\C,\Q(2)) \simeq \Ext^1_\MHS(\Q,H_2(X,\Q)).$$

Let $Z \subset X$ be as before and consider the regulator map 
$$r_{\sD,Z} \colon H^3_{\sM,Z}(X,\Q(2)) \to H^3_{\sD, Z}(X,\Q(2)) \simeq H_1(Z,\Q)$$
from the motivic cohomology supported on $Z$ (see \cite{asakura}, Sect. 2).  
Since $H_1(X,\Q)=0$ by Proposition \ref{H^1=0}, we have an exact sequence of mixed Hodge structures
$$H_2(Z,\Q) \to H_2(X,\Q) \to H_2(X,Z;\Q) \os\pd\to H_1(Z,\Q) \to 0.$$
If we denote the image of the first map by $\angle{Z}$, we have the connecting homomorphism
$$\r\colon H_1(Z,\Q) \cap H^{0,0} \to \Ext^1_\MHS(\Q,H_2(X,\Q)/\angle{Z}).$$
By the lemma and the remark below, $\rho$ describes the restriction of $r_\sD$ to the image of $H^3_{\sM,Z}(X,\Q(2))$. 
\begin{lem}
The diagram below is commutative up to sign:
\begin{equation*}
\xymatrix{
H^3_{\sM,Z}(X,\Q(2)) \ar[r]^{r_{\sD,Z}} \ar[d] & H_1(Z,\Q) \cap H^{0,0} \ar[r]^{\r\phantom{AAAAAA}} & \Ext^1_\MHS(\Q,H_2(X,\Q)/\angle{Z}) \\ 
H^3_{\sM}(X,\Q(2))
 \ar[r]^{r_\sD} & H^3_{\sD}(X_\C,\Q(2)) \ar[r]^{\simeq\phantom{AAA}} & \Ext^1_{\MHS}(\Q,H_2(X,\Q)) \ar[u]
}
\end{equation*}
where the vertical maps are the natural ones. 
\end{lem}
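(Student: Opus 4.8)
The plan is to deduce the commutativity from the functoriality of the Beilinson regulator together with a Hodge-theoretic reading of the localization sequence.

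First I would invoke the compatibility of $r_\sD$ with the localization (forget-supports) maps: by the construction of Beilinson \cite{beilinson} --- see also the exposition in the appendix following \cite{asakura} --- the regulator induces a morphism of the localization long exact sequences in motivic and in Deligne cohomology. In particular the square
$$\xymatrix{
H^3_{\sM,Z}(X,\Q(2)) \ar[r] \ar[d]_{r_{\sD,Z}} & H^3_\sM(X,\Q(2)) \ar[d]^{r_\sD} \\
H^3_{\sD,Z}(X_\C,\Q(2)) \ar[r] & H^3_\sD(X_\C,\Q(2))
}$$
commutes, the horizontal maps being the natural ones, and under $H^3_{\sD,Z}(X_\C,\Q(2))\simeq H_1(Z,\Q)$ the left vertical map is the $r_{\sD,Z}$ of the statement. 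Thus it remains to identify the composition of the bottom arrow with the Carlson isomorphism $H^3_\sD(X_\C,\Q(2))\simeq\Ext^1_\MHS(\Q,H_2(X,\Q))$ and the projection onto $\Ext^1_\MHS(\Q,H_2(X,\Q)/\angle Z)$ with the connecting map $\r$, up to sign.

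For this I would identify the localization long exact sequence for Deligne cohomology, in the relevant degrees, with the sequence of $\Ext^\bullet_\MHS(\Q,-)$-groups attached to the short exact sequence of mixed Hodge structures
$$0\to H_2(X,\Q)/\angle Z\to H_2(X,Z;\Q)\os{\pd}\to H_1(Z,\Q)\to 0,$$
which is exact because $H_1(X,\Q)=H_3(X,\Q)=0$ by Proposition \ref{H^1=0} and Poincar\'e duality. This uses the Carlson isomorphism $H^j_\sD(V_\C,\Q(n))\simeq\Ext^1_\MHS(\Q,H^{j-1}(V,\Q(n)))$ (there is no $\Hom_\MHS(\Q,-)$-contribution on the $X$-side since $H^3(X)=0$, and on the $U$-side it does not meet the image of motivic classes), Poincar\'e--Lefschetz duality $H^2(X,\Q(2))\simeq H_2(X,\Q)$ and $H^2(U,\Q(2))\simeq H_2(X,Z;\Q)$, and the purity identification $H^3_{\sD,Z}(X_\C,\Q(2))\simeq H_1(Z,\Q)$ --- which is where the rationality of the components of $Z$ (Corollary \ref{totdeg}) enters. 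Under these identifications the two forget-supports maps and the boundary of the Deligne localization triangle correspond term by term to the maps of the $\Ext$-sequence, and the connecting homomorphism of the latter, $\Hom_\MHS(\Q,H_1(Z,\Q))=H_1(Z,\Q)\cap H^{0,0}\to\Ext^1_\MHS(\Q,H_2(X,\Q)/\angle Z)$, is by definition $\r$.

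Granting this, the lemma follows by a direct chase: for $z\in H^3_{\sM,Z}(X,\Q(2))$ the class $r_{\sD,Z}(z)$, pushed through the forget-supports map into $\Ext^1_\MHS(\Q,H_2(X,\Q))$ and reduced modulo $\angle Z$, is precisely $\r(r_{\sD,Z}(z))$ by the term-by-term matching of the two long exact sequences. The only delicate point is the sign: the boundary of the Deligne localization triangle, the topological boundary $\pd$, and the normalization of the Carlson isomorphism are each fixed only up to sign, and their net discrepancy is what is absorbed into "commutative up to sign." I expect the main obstacle to be the second step --- checking that the Carlson isomorphism and Poincar\'e--Lefschetz duality are natural with respect to the Gysin and restriction maps, so that the Deligne-cohomology localization sequence genuinely coincides with the Hodge-theoretic $\Ext$-sequence; once this naturality is established, everything else is formal.
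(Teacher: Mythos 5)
The paper does not actually prove this lemma: its entire proof is the citation ``See Asakura--Sato, Theorem 11.2.'' Your proposal instead sketches the argument that presumably underlies that reference, so the comparison here is between a citation and an outline. Your second step --- identifying the localization sequence in Deligne cohomology, via the Carlson isomorphism and Poincar\'e--Lefschetz duality, with the $\Ext_\MHS(\Q,-)$-sequence attached to $0\to H_2(X,\Q)/\angle{Z}\to H_2(X,Z;\Q)\to H_1(Z,\Q)\to 0$, and recognizing the connecting homomorphism as $\r$ --- is sound; that is indeed where the sign ambiguity, the vanishing of $H^1(X)=H^3(X)$, and the purity of $H_1(Z,\Q)$ all enter, and your bookkeeping of the $\Hom$- versus $\Ext^1$-contributions (the $\Ext^1(\Q,H^2_Z)$-part of $H^3_{\sD,Z}$ dying modulo $\angle{Z}$) is correct in outline.

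The weak point is your first step. For a closed subscheme $Z$ that is singular --- here a union of reducible degenerate fibers --- the existence of $r_{\sD,Z}$ on motivic cohomology with supports together with the commutativity of the forget-supports square is \emph{not} a formal consequence of Beilinson's construction; one needs Chern classes on $K$-theory with supports with values in a cohomology theory that is not homotopy invariant, compatible with the localization triangles on both sides. That is precisely the content of the cited Theorem 11.2 of Asakura--Sato, and the appendix of this paper does not supply it (the appendix only computes $\r$ on classes in $H_1(Z_1,\Q)$ via rational forms; it says nothing about the regulator with supports). So your proof correctly isolates the two ingredients and handles the second, but the first is asserted exactly where the paper's citation is doing the real work: as written, your argument is a reduction of the lemma to that compatibility statement rather than an independent proof of it.
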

\begin{proof}
See \cite{asakura-sato}, Theorem 11.2. 
\end{proof}

\begin{rmk}
The right vertical arrow is surjective since $\Ext_\MHS^2=0$. 
Its kernel is topologically generated by decomposable elements, i.e. the image of 
$$(\mathrm{CH}_1(Z) \ot \ol\Q^\times)\ot_\Z\Q \to H^3_{\sM,Z}(X,\Q(2)).$$ 
Also, it is not difficult to show that $r_{\sD, Z}$ is surjective (see \cite{asakura}). 
\end{rmk}

\subsection{Regulator formula}

Now, we regard the extension classes as functionals (up to period functionals). Let
$H^2(X)_Z=\Ker(H^2(X) \to H^2(Z))$ as before.
Since 
$H^2(X,\Q)_Z \simeq (H_2(X,\Q)/\angle{Z})^*$, 
we have
$$\Ext^1_\MHS(\Q,H_2(X,\Q)/\angle{Z}) \simeq (F^1H^2(X,\C)_Z)^*/ \Image H_2(X,\Q)$$
where $*$ denotes the $\C$-linear dual. 
By Lemma \ref{autodual}, $\r$ induces a map
$$\r\colon (H_1(Z,\Q)\cap H^{0,0}) \ot_RK \to (F^1H_\C)^*/H_B^\vee$$
where $H_\C:=H_B \ot_\Q\C$ and $H^\vee$ denotes the dual de Rham--Hodge structure of $H$. 

Put $Z_1= \bigsqcup_{\z \in \mu_l} X_\z$. 
We shall describe the restriction of $\r$ to $H_1(Z_1,\Q) \ot_RK$. 
Recall that $H_1(Z_1,\Q) \subset H^{0,0}$ (Corollary \ref{totdeg}). 
We have in fact the following. 

\begin{lem}We have an isomorphism 
$$H_1(Z_1,\Q) \ot_R K \os\simeq\to H_1(Z,\Q) \ot_R K.$$
\end{lem}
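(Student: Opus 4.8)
The plan is to reduce the claim to the vanishing of the $K$-components of the homology of the two extra fibers $X_0$ and $X_\infty$, and then to obtain that vanishing from the residue computations of Proposition \ref{N} together with the fact that $\tau$ fixes the forms $\o_n$, $\y_n$.

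Since $\{0,\infty\}\cup\mu_l$ is a reduced finite set, $Z$ is a disjoint union of fibers, so $H_1(Z,\Q)=H_1(X_0,\Q)\oplus H_1(X_\infty,\Q)\oplus H_1(Z_1,\Q)$ as $R$-modules, and the map in the lemma is the inclusion of the last summand followed by $-\ot_R K$. The $R$-actions here factor through $R/(\s^p-1,\tau^l-1)\cong\Q\times\Q(\z_l)\times\Q(\z_p)\times K$, a product of fields, so $-\ot_R K$ amounts to projecting onto the $K$-factor; it is exact and, after $\ot_{\Q}\ol\Q$, extracts the sum of the eigenspaces $(\,\cdot\,)^{(m,n)}$ with $m\neq 0$ and $n\neq 0$. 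Hence it suffices to prove $H_1(X_0,\Q)\ot_R K=0$ and $H_1(X_\infty,\Q)\ot_R K=0$. For $X_\infty$ this is immediate: by Proposition \ref{N}, $N_\infty^{(n)}=\sH_{e,\infty}$, so $H_1^\dR(X_\infty)^{(n)}=\sH_{e,\infty}^{(n)}/N_\infty^{(n)}=0$ for $n=1,\dots,p-1$, and by the comparison isomorphism $H_1(X_\infty,\Q)\ot_\Q\ol\Q$ has no eigenspace with $n\neq 0$.

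For $X_0$ the relevant eigenspaces $H_1^\dR(X_0)^{(n)}\cong\sH_{e,0}^{(n)}/N_0^{(n)}$ are one-dimensional, so I cannot use $\s$ alone and must instead show that $\tau$ acts trivially on them. The forms $\o_n$, $\y_n$ are pulled back from $Y$ along $X\to Y^{(l)}\to Y$, while $\tau$ is an automorphism of $X$ over $Y$; hence $\tau^*\o_n=\o_n$ and $\tau^*\y_n=\y_n$ as local sections of $\sH_e$ at $t=0$, and since $\tau$ preserves the fiber $X_0$ the natural isomorphism $\sH_{e,0}^{(n)}/N_0^{(n)}\cong H_1^\dR(X_0)^{(n)}$ is $\tau$-equivariant. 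Using Corollary \ref{local hodge fil} and Proposition \ref{N} one checks that this quotient is generated by the image of $\o_n$ when $\a\le\b$ and by the image of $\o_n-\y_n$ when $\a>\b$; in either case the generator is $\tau^*$-fixed, so $\tau$ acts trivially on $H_1^\dR(X_0)^{(n)}$, hence on $H_1(X_0,\Q)^{(n)}$. Consequently every eigenspace of $H_1(X_0,\Q)\ot_\Q\ol\Q$ with $n\neq 0$ lies in the $\tau$-invariant part, so $H_1(X_0,\Q)\ot_R K=0$. Combining the two vanishings yields the lemma.

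I expect the only genuinely delicate point to be identifying the $\tau$-fixed generator of $\sH_{e,0}^{(n)}/N_0^{(n)}$: one must match the explicit local frame at $t=0$ and the residue line $N_0^{(n)}$ from Proposition \ref{can ext} and Proposition \ref{N} against $\o_n$ and $\y_n$, using $\ceil{(\a-\b)l}=-\floor{(\b-\a)l}$ when $\a\le\b$ and, when $\a>\b$, the relation $(1-\b)\o_n-(1-\a)\y_n=(\a-\b)\o_n+(1-\a)(\o_n-\y_n)$ together with the vanishing of $t^{\ceil{(\a-\b)l}}(\o_n-\y_n)$ in the fiber at $0$. Everything else is the routine bookkeeping with the eigenspace decomposition and the disjointness of $Z$ indicated above.
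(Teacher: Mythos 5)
Your proof is correct and follows essentially the same route as the paper's one-line argument, which likewise deduces from Proposition \ref{N} that $\tau$ acts trivially on $H_1(X_0,\Q)$ (via the $\tau$-invariance of the generator of $\sH_{e,0}^{(n)}/N_0^{(n)}$) and that $X_\infty$ contributes nothing. You have merely filled in the eigenspace bookkeeping and the identification of the $\tau$-fixed generator, and those details check out.
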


\begin{proof}
By Proposition \ref{N}, $\tau$ acts trivially on $H_1(X_0,\Q)$ and $H_1(X_\infty,\Q)=0$. 
\end{proof}

Let $(1-\s)_*\D_0 \in H_2(X,Z_1;\Q)$ be the Lefschetz thimble defined in Sect. 2.5, and 
let $H_2(X,Z_1;\Q)_{\mathrm{Lef}} \subset H_2(X,Z_1;\Q)$ denote the $R$-submodule generated by this element. 

\begin{lem}
The restriction of the boundary map
$$\pd \colon H_2(X,Z_1;\Q)_{\mathrm{Lef}} \ot_R K \to H_1(Z_1,\Q) \ot_R K$$
is surjective and $H_1(Z_1,\Q) \ot_R K$ is one-dimensional over $K$. 
\end{lem}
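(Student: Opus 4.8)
The plan is to prove the two assertions separately. \emph{For the dimension:} over $\ol\Q$ one has $H_1(Z_1,\ol\Q)=\bigoplus_{\z\in\mu_l}H_1(X_\z,\ol\Q)$, and by Proposition \ref{N} each eigenspace $H_1(X_\z,\ol\Q)^{(n)}$ ($n=1,\dots,p-1$) is one-dimensional. Since $\tau$ has order $l$ and permutes the $l$ fibres $X_\z$ in a single cycle, its action on the $l$-dimensional space $H_1(Z_1,\ol\Q)^{(n)}$ is conjugate to the standard cyclic shift; hence $H_1(Z_1,\ol\Q)^{(m,n)}$ is one-dimensional for every $m\in\Z/l\Z$ and every $n=1,\dots,p-1$. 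As these groups are modules over $R/(\s^p-1,\tau^l-1)\cong\Q\times\Q(\mu_p)\times\Q(\mu_l)\times K$, the functor $-\ot_RK$ is the (exact) projection onto the last factor, which after $\ot_{\ol\Q}$ collects precisely the $(m,n)$-components with $m\ne0$, $n\ne0$; therefore $\dim_KH_1(Z_1,\Q)\ot_RK=1$.

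\emph{For the surjectivity:} by exactness of $-\ot_RK$ together with $H_1(X,\Q)=0$ (Proposition \ref{H^1=0}), the long exact sequence of the pair $(X,Z_1)$ gives that $\pd\colon H_2(X,Z_1;\Q)\ot_RK\to H_1(Z_1,\Q)\ot_RK$ is surjective with kernel the image of $H_2(X,\Q)\ot_RK$; since the target is one-dimensional over $K$ and $H_2(X,Z_1;\Q)_{\mathrm{Lef}}\ot_RK$ is generated over $K$ by $(1-\s)_*\D_0$, it suffices to show $\pd\bigl((1-\s)_*\D_0\bigr)\ne0$ in $H_1(Z_1,\Q)\ot_RK$. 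Now $\d_0$ collapses to $(0,0)$ as $t\to0$ and the endpoints of $\d_0$ trace out $1$-chains supported in the $\s$-fixed locus $\{y=0\}$, so applying $(1-\s)_*$ to $\pd\D_0$ kills all terms on $\{y=0\}$ and leaves $\pd\bigl((1-\s)_*\D_0\bigr)=(1-\s)_*(\d_0|_{t=1})=\k_0|_{t=1}$, a $1$-cycle on the fibre $X_1\subset Z_1$ (recall $1\in\mu_l$), which is the image of $\k_0$ under the specialization map $\mathrm{sp}\colon H_1(X_t,\Q)\to H_1(X_1,\Q)$.

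Thus everything comes down to showing that $\mathrm{sp}(\k_0)$ has non-zero component in every $\s$-eigenspace. By Corollary \ref{totdeg} the monodromy at $t=1$ is unipotent and $X_1$ is a normal crossing divisor, so the Clemens--Schmid exact sequence identifies $\mathrm{sp}$ with the quotient $H_1(X_t,\Q)\twoheadrightarrow H_1(X_t,\Q)/\mathrm{im}(N)$, where $N$ is the logarithm of the local monodromy. Fixing $n$, the space $H_1(X_t,\Q)^{(n)}$ is two-dimensional with basis the projections of $\k_0,\k_1$ (Proposition \ref{kappa}) and $H_1(X_1,\Q)^{(n)}$ is one-dimensional (Proposition \ref{N}), so $\ker(\mathrm{sp}^{(n)})$ is one-dimensional; and since $\d_1$ collapses to the $\s$-fixed point $(1,0)$ as $t\to1$, the cycle $\k_1=(1-\s)_*\d_1$ is the vanishing cycle at $t=1$, so $\k_1^{(n)}$ bounds in $X_1$ and spans $\ker(\mathrm{sp}^{(n)})$. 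As $\k_0^{(n)}$ and $\k_1^{(n)}$ are linearly independent, $\mathrm{sp}(\k_0)^{(n)}\ne0$ for all $n=1,\dots,p-1$, hence $\pd\bigl((1-\s)_*\D_0\bigr)\ne0$ in $H_1(Z_1,\Q)\ot_RK$, as required. I expect the main obstacle to be exactly this last step --- making precise that the thimble boundary is the specialization of $\k_0$ and that $H_1(X_1)$ is the monodromy coinvariants of $H_1(X_t)$; once the Clemens--Schmid formalism already used for Corollary \ref{totdeg} is granted, the remainder is linear algebra in the two-dimensional eigenspaces $H_1(X_t,\Q)^{(n)}$.
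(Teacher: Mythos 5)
Your proof is correct and takes essentially the same route as the paper: the dimension count via Proposition \ref{N} together with the cyclic $\tau$-permutation of the components of $Z_1$, and the non-vanishing of $\pd((1-\s)_*\D_0)$ from the facts that $\k_1$ is the cycle vanishing at $t=1$ while $\k_0,\k_1$ span each eigenspace $H_1(X_t,\Q)^{(n)}$. The only cosmetic difference is that the paper invokes Proposition \ref{kappa}(ii) (generation over $\Q[\s]$) and leaves the surjectivity of the specialization map implicit, whereas you argue eigenspace by eigenspace via Proposition \ref{kappa}(i) and spell out the Clemens--Schmid input already used for Corollary \ref{totdeg}.
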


\begin{proof}
By Proposition \ref{N}, $\dim_\Q H_1(X_\z,\Q)=p-1$ for $\z \in \mu_l$. 
Since $\tau$ permutes the components of $Z_1$, 
$H_1(Z_1,\Q) \ot_R K$ is one-dimensional over $K$. 
Whereas $\k_0$ and $\k_1$ generate $H_1(X_t,\Q)$ (Proposition \ref{kappa} (ii)),
$\k_1$ vanishes as $t \to 1$ by definition. 
Therefore $\k_0$ does not vanish, i.e. $\pd((1-\s)_*\D_0)$ is non-trivial in $H_1(X_1,\Q)$, 
hence so is in $H_1(Z_1,\Q) \ot_R K$. 
\end{proof}

%

Now we state our main theorem. For $x \in K$, let $x_*$ (resp. $x^*$) denote its action on homology (resp. cohomology). 
Since $1-\z_p$ is invertible in $K$, we write 
$$((1-\z_p)^{-1})_* (1-\s)_*\D_0 \in H_1(X,Z_1;\Q)\ot_RK$$ simply as $\D_0$. 
For each $m$ and $n$, define an embedding $\chi_{m,n}\colon K\hookrightarrow \C$ by 
$$\chi_{m,n}(\z_l)=\z_l^m, \quad \chi_{m,n}(\z_p)=\z_p^n.$$

\begin{thm}\label{thm 2}
Let $\g \in H_1(Z_1,\Q) \ot_RK$ and take $x \in K$ such that $\g=x_*\pd\D_0$.   
Let $\{\o_{m,n} \mid n=1,\dots, p-1, m \in I_n^1\}$ be the basis of $F^1H_\dR$ 
given in Proposition \ref{F^1 basis}. 
Then we have
$$\r(\g)(\o_{m,n}) = \chi_{m,n}(x)
\frac{B(1-\a,\b)}{l(\b-\a+\mu)}\cdot \F{1-\a,\b,\b-\a+\mu}{1-\a+\b,\b-\a+\mu+1}{1}$$
where $\a=\{\frac{na}{p}\}$, $\b=\{\frac{nb}{p}\}$, $\m=\frac{m}{l}$. 
\end{thm}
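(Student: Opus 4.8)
The plan is to reduce $\r(\g)(\o_{m,n})$ to the period integral of the explicit rational $2$-form $\o_{m,n}=t^m\frac{dt}{t}\wedge\o_n$ over the Lefschetz thimble, which was already evaluated in Proposition~\ref{2-period}(ii), and then to read off the dependence on $x$ by a change of variables. Recall from \S6.1 that $\r(\g)$, as a functional on $F^1H_\dR$, is the Carlson extension class in $\Ext^1_\MHS(\Q,H_2(X,\Q)/\angle Z)$ attached to a relative $2$-cycle $\G\in H_2(X,Z_1;\Q)\ot_R K$ with $\pd\G=\g$; for $\g=x_*\pd\D_0$ one may take $\G=x_*\D_0$. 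I would then invoke the technique recalled in the appendix (following \cite{asakura}): under the identification $\i$ of Lemma~\ref{iota}, which represents the basis element $\o_{m,n}\in F^1(H_\dR^2(X)/\angle Z)^{(n)}$ by the rational $2$-form $\o_{m,n}$, the functional $\r(\pd\G)$ is given by $\o\mapsto\int_\G\o$. The point requiring justification here is that $\o_{m,n}$ is an admissible representative with the correct pole order along the canonical extension; this follows from the explicit shape of $\sH_e$ and of its Hodge filtration in \S3 (Corollaries~\ref{local hodge fil} and~\ref{can ext fil}) together with the \v{C}ech cocycles constructed in the proof of Proposition~\ref{F^1 basis}.

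Granting this, the remainder is formal. Since $\pd(x_*\D_0)=\g$, we get
$$\r(\g)(\o_{m,n})=\int_{x_*\D_0}\o_{m,n}=\int_{\D_0}x^*\o_{m,n}=\chi_{m,n}(x)\int_{\D_0}\o_{m,n},$$
using the change-of-variables formula for the correspondence $x$ and the fact that $(\tau^i\s^j)^*\o_{m,n}=\z_l^{mi}\z_p^{nj}\o_{m,n}$ exhibits $\o_{m,n}$ in the $\chi_{m,n}$-eigenspace of the $K$-action on $H_\dR$. The normalization in the theorem's $\D_0=((1-\z_p)^{-1})_*(1-\s)_*\D_0$ is designed precisely so that $\int_{\D_0}\o_{m,n}$ coincides with the integral over the $2$-simplex of \S2.5: the operator $(1-\s)_*$ pulls $\o_n$ back to $(1-\z_p^n)\o_n$, which cancels the scalar $\chi_{m,n}((1-\z_p)^{-1})=(1-\z_p^n)^{-1}$. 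By Fubini this $2$-simplex integral equals $\int_0^1 t^{m-1}\bigl(\int_{\d_0}\o_n\bigr)\,dt$ with the inner integral taken on the fiber $X_t$, which is exactly the quantity evaluated in Proposition~\ref{2-period}(ii) as $\frac{B(1-\a,\b)}{l(\b-\a+\m)}\,{}_3F_2\left({1-\a,\b,\b-\a+\m\atop1-\a+\b,\b-\a+\m+1};1\right)$; substituting gives the claimed formula.

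I expect the main obstacle to be the first step — turning the abstract Beilinson regulator / Carlson extension class into this concrete thimble integral — which is the content of the appendix, and where two technical points need care. First, $t^m\frac{dt}{t}\wedge\o_n$ has a priori poles along $Z_1$, so one must work with the logarithmic de Rham realization $\sH_e=R^1f_*\O^\bullet_{X/\P^1}(\log Z)$ and check that the form still pairs correctly with the relative cycle. Second, for the finitely many $m\in I_n^1$ with $\b-\a+\m\le 0$ — those with $\a>\b$ and $m\le(\a-\b)l$ — the iterated integral $\int_0^1 t^{m-1}(\int_{\d_0}\o_n)\,dt$ diverges at the vanishing vertex $t=0$, and the pairing has to be read as the regularized value, namely the analytic continuation in the parameter $\b-\a+\m$ of the convergent ${}_3F_2$-expression above. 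This is legitimate because $\a\neq\b$ forces $(\a-\b)l\notin\Z$ (indeed $(\a-\b)l\in\Z$ would force $a=b$, whence $\a=\b$), so $\b-\a+\m$ is never a non-positive integer and the expression is well-defined; for the remaining $m$ the integral converges outright.
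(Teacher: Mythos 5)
Your overall strategy coincides with the paper's: represent $\o_{m,n}$ by a rational $2$-form via the canonical extension, invoke the appendix (Theorem \ref{thm apdx}) to express $\r(\g)$ as integration over a relative cycle lifting $\g$, and evaluate by Proposition \ref{2-period}(ii). The gap sits exactly at the step you flag as the main obstacle, and your proposed fixes do not close it. Theorem \ref{thm apdx} requires the lifting cycle to lie in $H_2(X^\circ,D;\Q)$ with $X^\circ=X\setminus(X_0\cup X_\infty)$; this is where the rational representative $\o^\circ$ is actually regular (its poles are along $X_0\cup X_\infty$, not along $Z_1$ --- along $Z_1$ it must be, and is, regular for the relative pairing to make sense, so your first ``technical point'' is misplaced). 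The thimble $x_*\D_0$ does \emph{not} lie in $X^\circ$: it is swept over $0\le t\le 1$ and passes through the fiber $X_0$, where $\o_{m,n}$ has a pole. So you may not take $\G=x_*\D_0$ and write $\r(\g)(\o_{m,n})=\int_{x_*\D_0}\o_{m,n}$; indeed, as you yourself observe, that integral literally diverges for the $m\in I_n^1$ with $\b-\a+\m<0$, which is precisely the symptom of the cycle meeting the polar locus.

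Your remedy --- interpret the pairing as the analytic continuation in $\b-\a+\m$ --- is an assertion, not an argument: checking that $\b-\a+\m$ is never a non-positive integer only makes the ${}_3F_2$-expression well defined; it does not show that the regulator equals it. The paper closes the gap by replacing $(1-\tau)_*(1-\s)_*\D_0$ with a Lefschetz thimble $\G'$ having the same boundary but swept along a detour path from $\z_l$ to $1$ that avoids $t=0$ (a segment to $\e\z_l$, an arc to $\e$, a segment to $1$). Then $\G'\in H_2(X^\circ,Z_1;\Q)$, Theorem \ref{thm apdx} applies to it, and the convergent integral $\int_{\G'}\o_{m,n}$ is evaluated by letting $\e\to0$, giving the stated formula for $x=(1-\z_l)(1-\z_p)$; the general $x$ then follows by the cyclicity of $H_1(Z_1,\Q)\ot_RK$, which is the rigorous version of your change-of-variables display. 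This single maneuver simultaneously produces an admissible cycle and justifies the ``regularized'' value you postulate; without it the proof is incomplete exactly in the divergent cases.
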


\begin{proof}
We apply Theorem \ref{thm apdx} of the appendix (see also \cite[Theorem 4.1]{asakura}) to our situation where $D=Z_1$ and  $X^\circ = X \setminus(X_0 \cup X_\infty)$ (see the proof of Lemma \ref{iota}). 
Note that $H_\C \simeq H_\dR^2(X_\C)_0 \ot_R K$ by Lemma \ref{autodual} since $\tau$ acts trivially on $H_\dR^2(e(\P^1_\C))$ (see Sect. 7.2 for the notations). 

Put $\G=(1-\tau)_*(1-\s)_*\D_0$. 
Since $\G \in H_2(X,Z_1;\Q)$ does not necessarily comes from $H_2(X^\circ,Z_1;\Q)$, we take a detour. 
Let $\G'$ be the Lefschetz thimble given by sweeping $(1-\s)_*\d_0$ along the path $\k_1+\k_2+\k_3$ in $T\setminus\{0,\infty\}$ where $\k_1$ is the line segment from $\z$ to $\e\z$ ($\e>0$), $\k_2$ is the arc from $\e\z$ to $\e$ and $\k_3$ is the line segment from $\e$ to $1$. 
Then $\G' \in H_2(X^\circ,Z_1;\Q)$ and $\g:=\pd(\G)=\pd(\G')$. 
Theorem \ref{thm apdx} yields
$$\r(\g)(\o_{m,n})=\int_{\G'} \o_{m,n}. $$
The right integral is computed similarly as Proposition \ref{2-period} (ii), and letting $\e \to 0$ we obtain the theorem for $x=(1-\z_l)(1-\z_p)$. 
The general case follows by the cyclicity of $H_1(Z_1,\Q) \ot_RK$. 
\end{proof}

\subsection{Non-vanishing}

We prove the non-vanishing of $\r$ under a mild assumption. 
The situation is different depending whether $a+b=p$ or not. 

If $a+b\ne p$, the regulator does not vanish even in the Deligne cohomology with $\R$-coefficients, 
or equivalently, the extension group of $\R$-mixed Hodge structures
$$\Ext_{\R\MHS}^1(\R,H_\R) \simeq (F^1H_\C)^*/ H^\vee_\R$$
where $H_\R=H_B \ot_\Q \R$, $H_\C=H_B \ot_\Q \C$. Note that 
$$\dim_\R (F^1H_\C)^*/ H^\vee_\R=\dim_{\ol\Q} \Gr_F^1 H_\dR.$$
Let 
$$\r_\R \colon H_1(Z_1,\Q)\ot_R K \to (F^1H_\C)^*/ H^\vee_\R$$
be the composition of $\r$ and the natural surjection. 

\begin{thm}\label{thm 3} 
Suppose that $p<l$ and $a+b \ne p$ (so $p>2$). Then $\r_\R$ is non-trivial. 
In particular, 
$$\dim_\Q \r_\R(H_1(Z_1,\Q)\ot_R K)=(l-1)(p-1).$$ 
\end{thm}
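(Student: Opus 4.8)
The strategy is to exploit the $K$-linearity of $\r_\R$ to reduce non-triviality to non-vanishing on a single vector, and then to detect that non-vanishing on one carefully chosen isotypic block by means of the explicit formulae of Theorems \ref{thm 1} and \ref{thm 2}.

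First I would make the reduction precise. The target $(F^1H_\C)^*/H^\vee_\R$ is a module over $K\ot_\Q\R$, since the Hodge filtration and the Betti lattice are $K$-stable, and $\r_\R$ is $K$-linear, being $-\ot_RK$ applied to the map $\r$ of Sect.~6.1 followed by the natural $K$-linear surjection onto $(F^1H_\C)^*/H^\vee_\R$. By the lemmas preceding Theorem \ref{thm 2}, $H_1(Z_1,\Q)\ot_RK$ is one-dimensional over $K$, generated by $\pd\D_0$. Hence $\ker(\r_\R|_{H_1(Z_1,\Q)\ot_RK})$ is a $K$-subspace of a one-dimensional $K$-vector space, so $\r_\R$ is either zero or injective on $H_1(Z_1,\Q)\ot_RK$; in the injective case its image is $K$-isomorphic to $K$ and therefore has $\Q$-dimension $[K:\Q]=(l-1)(p-1)$. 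It thus suffices to prove $\r_\R(\pd\D_0)\ne 0$.

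Next I would localise the obstruction. From $\Ext^1_{\R\MHS}(\R,H_\R)\cong(F^1H_\C)^*/H^\vee_\R$ together with $\dim_\R(F^1H_\C)^*/H^\vee_\R=\dim_{\ol\Q}\Gr_F^1H_\dR$, only the pairs $\{\chi,\chi^{-1}\}$ with $H^\chi$ of Hodge type $(1,1)$ contribute to the target; on a pair of types $(2,0),(0,2)$ one has $\Ext^1_{\R\MHS}=0$. Using Theorem \ref{thm 1} one checks that, under $p<l$ and $a+b\ne p$ (hence $p>2$), there is such a $\chi=\chi_{m,n}$ whose de Rham generator in $F^1$ is one of the forms $\o_{m,n}$ of Proposition \ref{F^1 basis} with $\b-\a+\m\ge 0$ (the choice depends mildly on whether $\a<\b$, $\a=\b$ or $\a>\b$). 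Unwinding the Carlson isomorphism together with the real structure $c$, which interchanges $H^\chi$ and $H^{\chi^{-1}}$, the component of $\r_\R(\pd\D_0)$ on this pair is represented by
$$\r(\pd\D_0)(\o_{m,n})-\overline{\r(\pd\D_0)(\o_{m',n'})}\cdot\frac{\Per(H^\chi)}{\overline{\Per(H^{\chi^{-1}})}},$$
where $\o_{m',n'}$ is the corresponding generator of $F^1H_\dR^{\chi^{-1}}$, and it vanishes exactly when this number is $0$. By Theorem \ref{thm 2} the two regulator values are $\tfrac{B(1-\a,\b)}{l(\b-\a+\m)}\F{1-\a,\b,\b-\a+\m}{1-\a+\b,\b-\a+\m+1}{1}$ and its analogue with $(\a,\b,\m)$ replaced by $(1-\a,1-\b,1-\m)$; both are \emph{positive} real numbers, because the parameters are positive and so every term of the defining series at $1$ is positive. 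By Theorem \ref{thm 1}, $\Per(H^\chi)$ and $\Per(H^{\chi^{-1}})$ are the associated products of $\G$-values up to $K'^\times$, with $\Per(H^\chi)\Per(H^{\chi^{-1}})\sim_{K^\times}(2\pi i)^2$.

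The heart of the matter, and the step I expect to be hardest, is to show that the displayed combination does not vanish. Since the two regulator values are nonzero reals, this amounts to showing that $\Per(H^\chi)\big/\overline{\Per(H^{\chi^{-1}})}$ is not the (positive real) ratio of those two values; equivalently, the ${}_3F_2$-value above cannot equal, up to a factor in $K'^\times$ and a real factor, a ratio of $\G$-values, i.e.\ a period of a rank-one motive. The role of the hypothesis $a+b\ne p$ is exactly to exclude the degenerate situation in which such a spurious relation holds: when $a+b=p$ (equivalently $\a+\b=1$) the parameters collapse to $\F{1-\a,1-\a,1-2\a+\m}{2-2\a,2-2\a+\m}{1}$, and the extra symmetry of the fibration forces $\r_\R$ to be trivial, whereas for $a+b\ne p$ no such collapse occurs. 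I would establish the required non-degeneracy by the asymptotic method recalled in the Appendix and developed in \cite{asakura} — comparing the growth of the ${}_3F_2$-value with the explicit $\G$-periods of Theorem \ref{thm 1} (for instance as $\m$ approaches a boundary value, or by degenerating the fibration to a limit where the regulator becomes a manifestly nonzero logarithm of an algebraic number). Combined with the positivity noted above and the reduction of the first paragraph, this yields $\r_\R(\pd\D_0)\ne 0$ and hence $\dim_\Q\r_\R(H_1(Z_1,\Q)\ot_RK)=(l-1)(p-1)$.
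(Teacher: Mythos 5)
There is a genuine gap at exactly the step you flag as "the heart of the matter." You reduce correctly to showing that one $(1,1)$-component of $\r_\R(\pd\D_0)$ is nonzero, but then you propose to prove the non-vanishing of your displayed combination by showing that a ${}_3F_2(1)$-value cannot agree, up to $K'^\times$ and a real factor, with a ratio of $\G$-values — to be established by an unspecified "asymptotic method." That is not a proof: no such transcendence/independence statement is available, and your own setup makes it unreachable, since Theorem \ref{thm 1} pins down $\Per(H^\chi)$ only up to $K'^\times$, which is far too coarse to decide whether an exact linear combination vanishes. The paper's argument is instead an elementary sign computation with the \emph{exact} period integrals of Proposition \ref{2-period}(i). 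One normalizes $\wt\o_{m,n}=\O_{m,n}^{-1}\o_{m,n}$ with $\O_{m,n}=\int_{\D_1}\o_{m,n}=-\tfrac{(-1)^{p\b}}{l}B(\b,\m)B(1-\b,\b-\a+\m)$, so that $\wt\o_{m,n}+\wt\o_{m',n'}\in F^1H_\R$ (with $n'=p-n$, $m'=l-m$), and computes
$$\r_\R'(\g)\bigl(\wt\o_{m,n}+\wt\o_{m',n'}\bigr)=\Im\bigl(\chi_{m,n}(x)\bigr)\bigl(\O_{m,n}^{-1}R_{m,n}-\O_{m',n'}^{-1}R_{m',n'}\bigr).$$
Since $p$ is odd and $p\b+p\b'=p$, the signs $(-1)^{p\b}$ and $(-1)^{p\b'}$ are opposite, so $\O_{m,n}\O_{m',n'}<0$, while $R_{m,n},R_{m',n'}>0$ (you did note this positivity). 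Hence the bracket is a difference of a positive and a negative real number and cannot vanish. No arithmetic independence of ${}_3F_2$-values versus $\G$-values is needed.

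A secondary inaccuracy: the hypothesis $a+b\ne p$ is not there to "exclude a spurious $\G$-relation," and the paper does not claim that $\r_\R$ is trivial when $a+b=p$ (for $p>2$ one can have $\Gr^1_FH_\dR\ne0$ in that case; Proposition \ref{criterion} only gives a criterion). Its actual role is combinatorial: $a+b\ne p$ gives $\a\ne1-\b$, hence $|\a-(1-\b)|\ge1/p>1/l$, which guarantees an integer $m$ with $\min\{\floor{\a l},\floor{(1-\b)l}\}<m\le\max\{\floor{\a l},\floor{(1-\b)l}\}$; by Propositions \ref{F^2 basis} and \ref{F^1 basis} this puts $\o_{m,n}$ in $\Gr_F^1H_\dR$ and in the range $\m>\a-\b$ where Proposition \ref{2-period}(i) applies. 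Your first paragraph (the $K$-linearity reduction and the dimension count) is fine and matches the paper.
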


\begin{proof}
By restricting the functionals to $F^1H_\R:=F^1H_\C \cap H_\R$ and taking the imaginary part, we obtain a $K\cap \R$-linear map
$$\r_\R'\colon H_1(Z_1,\Q)\ot_R K \to \Hom(F^1H_\R, i\R).$$
For each $n=1,\dots, p-1$, we have $\a\ne 1-\b$ by the assumption. 
Hence $|\a-(1-\b)| \ge 1/p >1/l$ and there exists an $m$ satisfying 
\begin{align}\label{good m}
\min\{\floor{\a l}, \floor{(1-\b)l}\} < m \le \max\{\floor{\a l}, \floor{(1-\b)l}\}.
\end{align}
Then we have $\o_{m,n} \in \Gr_F^1 H_\dR$
by Propositions \ref{F^2 basis} and \ref{F^1 basis}. 
Since $m>\floor{(\a-\b)l}$, we have $\m:=m/l>\a-\b$, hence we can apply Proposition \ref{2-period} (i) to compute the period: 
$$\O_{m,n}:=\int_{\D_1} \o_{m,n} = -\frac{(-1)^{p\b}}{l} B(\b,\m)B(1-\b,\b-\a+\m).$$
Put a normalization
$$\wt\o_{m,n} = \O_{m,n}^{-1} \o_{m,n}.$$
Then we have 
$$\int_{x_*\D_1} \wt\o_{m,n} = \int_{\D_1} x^* \wt\o_{m,n} = \chi_{m,n}(x)$$
for any $x \in K$. 
If we let $n'=p-n$, $\a'=\{n'a/p\}=1-\a$, $\b'=\{n'b/p\}=1-\b$, 
$m'=l-m$ and $\m'=\{m'/l\}=1-\m$, then these satisfy the assumption \eqref{good m}. Hence 
$\wt\o_{m',n'}$ is defined and we have 
$$\int_{x_* \D_1} \wt\o_{m',n'} = \ol{\chi_{m,n}(x)}$$
for any $x \in K$.  
Since $H_B^\vee$ is generated as a $K$-module by
$$((1-\z_l)^{-1}(1-\z_p)^{-1})_*(1-\tau)_*(1-\s)_*\D_1,$$
which we simply denote $\D_1$ as before, we have $\ol{\wt\o_{m,n}}=\wt\o_{m',n'}$ and hence
$$\wt\o_{m,n}+\wt\o_{m',n'} \in F^1H_\R.$$
Put the regulator as
$$R_{m,n}:= \int_{\D_0} \o_{m,n} = \frac{B(1-\a,\b)}{l(\b-\a+\mu)}\cdot \F{1-\a,\b,\b-\a+\mu}{1-\a+\b,\b-\a+\mu+1}{1}.$$
By Theorem \ref{thm 2}, for any $\g \in H_1(Z_1,\Q)$ corresponding to $x \in K$ as in loc. cit., we have
\begin{align*}
\r_\R'(\g)(\wt\o_{m,n}+\wt\o_{m',n'})
&=\Im\left(\chi_{m,n}(x) \O_{m,n}^{-1} R_{m,n} +\ol{\chi_{m,n}(x)} \O_{m',n'}^{-1} R_{m',n'}\right)
\\&=\Im(\chi_{m,n}(x)) \left(\O_{m,n}^{-1} R_{m,n}-\O_{m',n'}^{-1} R_{m',n'}\right). 
\end{align*}
Since $\O_{m,n}\O_{m',n'}<0$ and $R_{m,n}, R_{m',n'}>0$, 
the above does not vanish for $x \in K \setminus \R$. 
Hence $\r_\R$ is non-trivial. Since $\r_\R$ is $K$-linear, the second assertion follows. 
\end{proof}

The non-vanishing of $\r$ is a more subtle problem. 
For the case $a+b=p$, we have the following criterion. 

\begin{ppn}\label{criterion}
Let $p$, $l$ be distinct prime numbers and suppose that $a+b=p$. 
If $\r$ is trivial, then there exists an $x \in K$ such that 
$$R_{m,n} = \chi_{m,n}(x) \O_{m,n}$$
for any $n=1,\dots, p-1$ and $m \in I_n^1$ such that $\frac{m}{l}>\{\frac{na}{p}\}-\{\frac{nb}{p}\}$. 
\end{ppn}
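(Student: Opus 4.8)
The plan is to deduce Proposition~\ref{criterion} directly from Theorem~\ref{thm 2} by testing the putative triviality of $\r$ against the specific elements $\wt\o_{m,n}$ used in the proof of Theorem~\ref{thm 3}. First I would recall that, since $a+b=p$, the duality of Lemma~\ref{autodual} pairs $H^{(h)}$ with $H^{(-h)}$, and under the substitution $n\mapsto n'=p-n$, $m\mapsto m'=l-m$ one has $\a'=1-\a$, $\b'=1-\b$, $\m'=1-\m$; crucially, when $a+b=p$ the condition $\a'=1-\b$ would force $b=a$, but it still can happen that $\a+\b$ relates to $1$ in a way that one must track carefully — so the first step is to isolate exactly those pairs $(m,n)$ with $\frac{m}{l}>\{\frac{na}{p}\}-\{\frac{nb}{p}\}$ for which Proposition~\ref{2-period}(i) applies and $\O_{m,n}\ne 0$, and to check that the normalized forms $\wt\o_{m,n}=\O_{m,n}^{-1}\o_{m,n}$ are well-defined elements of $F^1H_\dR$.

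Next I would argue as follows. Suppose $\r$ is trivial, i.e. $\r(\g)=0$ in $(F^1H_\C)^*/H_B^\vee$ for every $\g\in H_1(Z_1,\Q)\ot_R K$. By the cyclicity of $H_1(Z_1,\Q)\ot_RK$ (the last lemma before Theorem~\ref{thm 2}), it suffices to take $\g=\pd\D_0$, i.e. $x=1$. Triviality of $\r(\pd\D_0)$ means that the functional $\o\mapsto\r(\pd\D_0)(\o)$ on $F^1H_\C$ lies in the image of $H_B^\vee$; concretely, there is a homology class whose period functional equals the regulator functional. Evaluating both sides on the basis $\{\o_{m,n}\}$ of Proposition~\ref{F^1 basis}: the regulator side gives $R_{m,n}$ by Theorem~\ref{thm 2} (with $x=1$), while the period side is a $K$-linear combination of the period functionals $\o_{m,n}\mapsto\O_{m,n}$ coming from $\D_1$. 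Since $H_B^\vee$ is generated over $K$ by (the normalization of) $(1-\tau)_*(1-\s)_*\D_1$, the image of $H_B^\vee$ in $(F^1H_\C)^*$ consists precisely of the functionals $\o\mapsto\chi(x)\int_{\D_1}\o$ for $x\in K$, where $\chi=\chi_{m,n}$ on the component $H^{(m,n)}$. Matching coefficients on each eigen-component $H^{(m,n)}$ — which is legitimate because the $\o_{m,n}$ lie in distinct eigenspaces for the $K$-action — yields exactly $R_{m,n}=\chi_{m,n}(x)\,\O_{m,n}$ for the relevant range of $(m,n)$, which is the assertion.

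The main obstacle I anticipate is bookkeeping rather than a deep difficulty: one must be careful that the identification $(F^1H_\C)^*/H_B^\vee\simeq \Ext^1_{\MHS}(\Q,H_2/\angle Z)$ is compatible with the eigenspace decomposition, so that "triviality of $\r$" really does decompose as triviality on each $H^{(m,n)}$, and that the single class $x$ works uniformly across all $n=1,\dots,p-1$ and all admissible $m$ (this is where the cyclicity over $R$, not just over each individual $K$-eigenline, is essential). I would also need to confirm that the range $\frac{m}{l}>\{\frac{na}{p}\}-\{\frac{nb}{p}\}$ is precisely the locus where Proposition~\ref{2-period}(i) gives a nonzero $\O_{m,n}$, so that division by $\O_{m,n}$ in forming $\wt\o_{m,n}$ is valid and the equation $R_{m,n}=\chi_{m,n}(x)\O_{m,n}$ is the natural normalized form. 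Once these compatibilities are in place, the proposition follows formally from Theorem~\ref{thm 2} and the description of $H_B^\vee$ as a cyclic $K$-module, with no further computation needed.
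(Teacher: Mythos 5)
Your proposal is correct and follows essentially the same route as the paper: take $\g=\pd\D_0$, use that $H_B^\vee$ is generated over $K$ by $\D_1$ so triviality of $\r(\g)$ means the regulator functional equals $\int_{x_*\D_1}$ for some $x\in K$, and evaluate on the $\o_{m,n}$ via Theorem \ref{thm 2} to get $R_{m,n}=\chi_{m,n}(x)\O_{m,n}$. The preliminary worries about duality, nonvanishing of $\O_{m,n}$ and the normalized forms $\wt\o_{m,n}$ are unnecessary here (no division by $\O_{m,n}$ is needed; the hypothesis $\frac{m}{l}>\a-\b$ only ensures Proposition \ref{2-period}(i) applies), but they do not affect the argument.
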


\begin{proof}
Let $\g=\pd\D_0$ and suppose that $\r(\g)=0$. 
Since $H_B^\vee$ is generated by $\D_1$ over $K$, 
there exists an $x\in K$ such that $\r(\g)$ is represented by the functional $\int_{x_*\D_1}$. 
If $m$, $n$ are as in the statement, then 
$$\int_{x_*\D_1}\o_{m,n} = \int_{\D_1} x^*\o_{m,n} = \chi_{m,n}(x) \O_{m,n}$$
by the definition. Hence the proposition follows. 
\end{proof}

\begin{ex}If $p=2$, then $\a=\b=1/2$ and $Y$ is nothing but the Legendre family of elliptic curves. By Proposition \ref{hodge number 1}, we have $\Gr_F^1H_\dR=0$ and the Deligne cohomology with $\R$-coefficients is trivial. 
Since  the condition $\frac{m}{l}>\{\frac{na}{p}\}-\{\frac{nb}{p}\}$ ($=0$) is automatically satisfied, 
Proposition \ref{criterion} is in fact an equivalence. 
If $l=3$ for example, then $\rho$ is trivial if and only if
$$\sqrt{3} \left(\frac{\G(\frac{5}{6})}{\G(\frac{1}{3})}\right)^2 \cdot
\F{\frac{1}{2}, \frac{1}{2},\frac{1}{3}}{1,\frac{4}{3}}{1}
 \in \Q.$$
Here we used $\Q(\z_3) \cap i\R = \sqrt{3}i \Q$. 
\end{ex}

\section{Appendix: Fibration of Curves and Extension of Motives}

In this appendix, we give a short exposition of a technique developed in 
\cite{asakura} which is used in the proof of the regulator formula (Theorem \ref{thm 2}).

\subsection{Relative cohomology}

Let $V$ be a quasi-projective smooth surface over $\C$.
Let $D\subset V$ be a chain of curves.
Let $\pi \colon\wt{D}\to D$ be the normalization and $\Sigma\subset D$ be
the set of singular points. Let $s\colon \wt{\Sigma}:=\pi^{-1}(\Sigma)
\hra\wt{D}$ be the inclusion.
There is an exact sequence
$$
0\to \cO_{D}\os{\pi^*}{\to} \cO_{\wt{D}}\os{s^*}{\to} 
\C_{\wt{\Sigma}}/\C_\Sigma\to 0
$$
where $\C_{\wt{\Sigma}}=\mathrm{Maps}(\wt{\Sigma},\C)=\Hom(\Z\wt{\Sigma},\C)$ and 
$\pi^*$, $s^*$ are the pull-backs. 
For a smooth manifold $M$, let 
$\cA^q(M)$ denote the space of smooth differential $q$-forms on $M$
with coefficients in $\C$. 
We define $\cA^\bullet(D)$ to be the mapping fiber of 
$s^*\colon \cA^\bullet(\wt{D})\to \C_{\wt{\Sigma}}/\C_\Sigma$:
$$
\cA^0(\wt{D})
\os{s^*\op d}{\lra} \C_{\wt{\Sigma}}/\C_\Sigma\op \cA^1(\wt{D})
\os{0\op d}{\lra} \cA^2(\wt{D})
$$
where the first term is placed in degree 0.
Then
$$
H^q_\dR(D)=H^q(\cA^\bullet(D))
$$
is the de Rham cohomology of $D$, which fits into the exact sequence
$$
\cdots\to H^0_\dR(\wt{D})\to \C_{\wt{\Sigma}}/\C_\Sigma
\to H^1_\dR(D)\to H^1_\dR(\wt{D})\to\cdots.
$$
We have the natural pairing 
\begin{equation*}\label{pairingD}
\langle \ , \ \rangle_D\colon H_1(D,\Z)\otimes H^1_\dR(D)\to \C,
\quad
\gamma\ot z\mapsto \int_\gamma\eta-c(\partial(\pi^{-1}\gamma))
\end{equation*}
where $z$ is represented by $(c,\eta)\in \C_{\wt{\Sigma}}/\C_\Sigma\op\cA^1(\wt{D})$ with $d\eta=0$
and $\partial$ denotes the boundary of homology cycles.

We define $\cA^\bullet(V,D)$
to be the mapping fiber of 
$\wt i^*\colon \cA^\bullet(V)\to \cA^\bullet(\wt D)$, the pull-back by $\wt i \colon \wt D\to V$:
$$
\cA^0(V)\os{\cD_0}{\to} \cA^0(\wt{D})\op \cA^1(V)
\os{\cD_1}{\to} \C_{\wt{\Sigma}}/\C_\Sigma\op\cA^1(\wt{D})\op \cA^2(V)\os{\cD_2}{\to}\cdots.
$$
Then the relative de Rham cohomology is defined by
\begin{equation*}\label{exp-9-0}
H^q_\dR(V,D)=H^q(\cA^\bullet(V,D))
\end{equation*}
and fits into the exact sequence
\begin{equation}\label{exp-9}
\cdots\to H^{q-1}_\dR(D)\to H^q_\dR(V,D)\to H^q_\dR(V)\to H^q_\dR(D)\to\cdots. 
\end{equation}
An element of $H^2_\dR(V,D)$ is represented by 
\begin{equation}\label{element}
(c,\eta,\omega)
\in \C_{\wt{\Sigma}}/\C_\Sigma\op\cA^1(\wt{D})\op \cA^2(V)
\end{equation}
which satisfies $\wt i^*\omega=d\eta$ and $d\omega=0$.
The natural pairing
\begin{equation*}\label{pairingVD1}
\langle \ , \ \rangle_{V,D} \colon H_2(V,D;\Z)\ot H_\dR^2(V,D)\to \C
\end{equation*}
is given by
\begin{equation*}\label{pairingVD2}
\angle{\G,z}_{V,D}
=\int_\Gamma \omega- \angle{\pd\G,(c,\eta)}_D 
=\int_\Gamma \omega-\int_{\partial\Gamma}\eta+c(\pd(\pi^{-1}(\partial\Gamma))).
\end{equation*}

The complexes $\cA^\bullet(V)$ and $\cA^\bullet(D)$ are canonically equipped with
Hodge and weight filtrations, and then 
$(\Q_V,\cA^\bullet(V),F^\bullet,W_\bullet)$ 
and $(\Q_D,\cA^\bullet(D),F^\bullet,W_\bullet)$ become cohomological mixed
Hodge complexes in the sense of \cite[(8.1.2)]{hodge III}.
The Hodge and weight filtrations on $\cA^\bullet(V,D)$ are induced from them
and the data
$(\Q_{V,D},\cA^\bullet(V,D),F^\bullet,W_\bullet)$ becomes a cohomological mixed
Hodge complex as well.
Hence we have an exact sequence
\begin{equation*}
\cdots\to H^{q-1}(D,\Q)\to H^q(V,D;\Q)\to H^q(V,\Q)\to H^q(D,\Q)\to\cdots
\end{equation*}
of mixed Hodge structures which is compatible with \eqref{exp-9}. 
Taking its dual, we obtain an exact sequence
\begin{equation*}
0\to H_2(V,\Q)/H_2(D)\to H_2(V,D;\Q)\os{\partial}{\to} H_1(D,\Q)\to H_1(V,\Q).
\end{equation*}
Since $H_1(V,\Q) \cap H^{0,0}=0$, 
we obtain the coboundary map
\begin{equation*}
\rho_{V,D} \colon H_1(D,\Q) \cap H^{0,0} \to \Ext^1_\MHS(\Q,H_2(V,\Q)/H_2(D))
\end{equation*}
to the extension group of mixed Hodge structures.
If we put
$$H^2_\dR(V)_D:=\Ker[H^2_\dR(V)\to H^2_\dR(D)],$$ 
then we have the Carlson isomorphism
\begin{equation*}
\Ext^1_\MHS(\Q,H_2(V,\Q)/H_2(D))\simeq\Coker\left[H_2(V,\Q)\to (F^1H^2_\dR(V)_D)^*\right]
\end{equation*}
where $*$ denotes the $\C$-linear dual and the map is the natural pairing. 
Under this identification, the map $\rho_{V,D}$ is described as follows.
For $\gamma\in H_1(D,\Q)\cap H^{0,0}$, take a $\Gamma\in H_2(V,D;\Q)$ such that $\partial(\Gamma)=\gamma$.
Then we have
\begin{equation}\label{exp-14}
\rho_{V,D}(\gamma)=\left[\omega\mapsto\angle{\Gamma,\omega_{V,D}}_{V,D}
\right]
\end{equation}
where $\omega_{V,D}\in F^1H^2_\dR(V,D)$ is a lifting of $\omega$,  
on which the pairing does not depend.  

\subsection{Rational forms}
For a given $\o$, it is usually complicated to compute an analytic lifting $\omega_{V,D}$ explicitly. 
In the following situation, we shall be able to associate a {\em rational} $2$-form via Deligne's canonical extension, which gives a simple expression of $\r_{V,D}$.  

Let $C$ be a projective smooth curve over $\C$ and $f\colon X\to C$ be a fibration of curves with connected general fiber which admits a section $e\colon C\to X$. 
From now on, we use the algebraic de Rham cohomology groups (see \cite{hartshorne}) and identify them with the analytic ones in the previous paragraph. 
For a Zariski open set $S\subset C$, let $V=f^{-1}(S)$ and put
\begin{align*}
& H^2_\dR(V)_0=\Ker\left[H^2_\dR(V)\to \prod_{s\in S}H^2_\dR(f^{-1}(s))\times H^2_\dR(e(S))\right],
\\& H^2_\dR(V,D)_0=\Ker\left[H^2_\dR(V,D)\to H^2_\dR(V)/H^2_\dR(V)_0\right].
\end{align*}
Then we have an exact sequence of mixed Hodge structures
\begin{equation}\label{pairingVD4-2}
H^1_\dR(V)\to H^1_\dR(D)\to H^2_\dR(V,D)_0\to H^2_\dR(V)_0\to0. 
\end{equation}
The arrows are strictly compatible with the Hodge and weight filtrations. 
In particular, $F^1H_\dR^2(V,D)_0 \to F^1H_\dR^2(V)_0$ is surjective. 
Later, we shall use the following. 

\begin{lem}\label{lem-birat}
Let $g\colon V'\to V$ be a birational transformation which is isomorphic outside $D$ and put $D'=g^{-1}(D)$.
Then the pull-back $g^*$ induces
isomorphisms $H^2_\dR(V)_0\simeq H_\dR^2(V')_0$
and $H^2_\dR(V,D)_0\simeq H_\dR^2(V',D')_0$.
\end{lem}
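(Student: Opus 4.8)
The plan is to reduce $g$ to a composition of blow-ups at points and then argue by induction on their number. Since $V$ and $V'$ are smooth quasi-projective surfaces and I take $g$ to be a proper birational morphism (the situation occurring in the applications), $g$ is projective and hence factors as a finite sequence of blow-ups at closed points; as $g$ is an isomorphism outside $D$, each centre lies over $D$, and each exceptional curve, being contracted by $g$, lies in a single fibre of $f':=f\circ g$. So it suffices to treat the case of a single blow-up $g$ at a point $P$ over $D$, with exceptional curve $E\cong\P^1$: then $D'=g^{-1}(D)$ is again a union of curves to which the constructions of Sect.\ 7.1 apply, the maps $g$ and $g|_{D'}\colon D'\to D$ are proper with connected fibres (points, or $E\cong\P^1$), and the section $e$ of $f$ lifts to a section $e'$ of $f'$ with $g|_{e'(S)}$ an isomorphism onto $e(S)$. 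Two standard inputs enter: first, by the Leray spectral sequence (using $R^0g_*\C=\C$, $R^1g_*\C=0$, and likewise for $g|_{D'}$) the maps $g^*\colon H^1_\dR(V)\to H^1_\dR(V')$ and $g^*\colon H^1_\dR(D)\to H^1_\dR(D')$ are isomorphisms; second, the blow-up formula gives a direct sum decomposition $H^2_\dR(V')=g^*H^2_\dR(V)\oplus W$ with $W=\C\cdot[E]$ spanned by the cycle class of $E$, the map $g^*$ being injective.

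For the first isomorphism I would check the two inclusions. That $g^*H^2_\dR(V)_0\subseteq H^2_\dR(V')_0$ follows from functoriality of restriction: since $(f')^{-1}(s)=g^{-1}(f^{-1}(s))$ and $g$ carries $e'(S)$ isomorphically onto $e(S)$, the class $g^*\alpha$ restricts to zero on each fibre $(f')^{-1}(s)$ and on $e'(S)$ whenever $\alpha$ restricts to zero on $f^{-1}(s)$ and on $e(S)$; and $g^*$ is injective. Conversely, given $\beta\in H^2_\dR(V')_0$, write $\beta=g^*\alpha+w$ with $w\in W$. Because $E$ lies in a fibre $F=(f')^{-1}(s_0)$, transitivity of restriction gives $\beta|_E=(\beta|_F)|_E=0$, whereas $g^*\alpha|_E=0$ since it factors through $H^2_\dR(P)=0$; hence $w|_E=0$. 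But $W\to H^2_\dR(E)$, $[E]\mapsto[E]|_E=(E\cdot E)[\mathrm{pt}]$, is an isomorphism as $E\cdot E=-1\neq0$, so $w=0$ and $\beta=g^*\alpha$. Finally, $g^*\colon H^2_\dR(f^{-1}(s))\to H^2_\dR((f')^{-1}(s))$ is injective (it sends the class of a component to a nonzero class supported on its proper transform), so $g^*\alpha|_{(f')^{-1}(s)}=0$ forces $\alpha|_{f^{-1}(s)}=0$, and likewise $\alpha|_{e(S)}=0$. Hence $\alpha\in H^2_\dR(V)_0$, and $g^*\colon H^2_\dR(V)_0\to H^2_\dR(V')_0$ is an isomorphism.

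For the second isomorphism I would map the exact sequence \eqref{pairingVD4-2} for $(V,D)$ into the one for $(V',D')$ via $g^*$. By the above, the maps on $H^1_\dR(V)$, on $H^1_\dR(D)$, and on $H^2_\dR(V)_0$ are isomorphisms; consequently $\Coker(H^1_\dR(V)\to H^1_\dR(D))$ maps isomorphically onto $\Coker(H^1_\dR(V')\to H^1_\dR(D'))$, and \eqref{pairingVD4-2} presents $H^2_\dR(V,D)_0$ and $H^2_\dR(V',D')_0$ as extensions of $H^2_\dR(V)_0\simeq H^2_\dR(V')_0$ by these isomorphic cokernels, compatibly with $g^*$. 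The five lemma then gives $H^2_\dR(V,D)_0\simeq H^2_\dR(V',D')_0$.

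I expect the main obstacle to be careful bookkeeping rather than any deep point. One must verify that the constructions of Sect.\ 7.1 (the normalisation, the complex $\cA^\bullet(D)$, the pairings) genuinely survive replacing $D$ by $D'$, which after blowing up a node of $D$ need not literally be a chain of curves; and one must avoid Poincar\'e duality and the intersection form on $H^4$, unavailable because $V$ is only quasi-projective, which is why I restrict classes directly to the exceptional curves sitting inside fibres rather than pairing against them. The only general fact invoked is that the exceptional classes form a complement to $g^*H^2_\dR(V)$ on which restriction to the exceptional curves is nondegenerate; for a composition of blow-ups this amounts to the negative-definiteness of the exceptional intersection matrix.
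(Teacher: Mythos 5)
Your proposal is correct, and its final step --- reducing the relative statement to the three isomorphisms $H^1_\dR(V)\simeq H^1_\dR(V')$, $H^1_\dR(D)\simeq H^1_\dR(D')$, $H^2_\dR(V)_0\simeq H^2_\dR(V')_0$ via the exact sequence \eqref{pairingVD4-2} and a five-lemma argument --- is exactly the paper's own reduction. Where you genuinely diverge is in how the three isomorphisms are proved. The paper does not factor $g$: it chooses smooth compactifications with $X'\setminus D'\simeq X\setminus D$, compares the two localization sequences, and identifies the relevant images with $W_nH^n_\dR(X\setminus D)$, so the isomorphism $H_1^\dR(D)\simeq H_1^\dR(D')$ (and then $H^2_\dR(V)_0\simeq H^2_\dR(V')_0$, using $V\setminus D\simeq V'\setminus D'$) falls out of the weight filtration; this treats an arbitrary birational transformation isomorphic outside $D$ in one stroke. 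You instead take $g$ to be a proper birational morphism (harmless for the application, where $g$ is a composition of blow-ups, but strictly a restriction of the stated generality), factor it into point blow-ups, and use the decomposition $H^2_\dR(V')=g^*H^2_\dR(V)\oplus\C\cdot[E]$ together with the observations that $E$ lies in a single fibre of $f\circ g$ and that $[E]|_E\neq 0$, plus Leray/proper base change for $g$ and $g|_{D'}$ to handle the $H^1$'s and the injectivity of pull-back on the $H^2$ of fibres to descend from $g^*\alpha$ to $\alpha$. Your route is more elementary and explicit --- no compactification and no appeal to mixed Hodge theory --- at the price of the factorization theorem, the properness hypothesis, and the bookkeeping you yourself flag (that the complex $\cA^\bullet(D')$ still computes $H^\bullet(D',\C)$ for the blown-up configuration, functorially in $D$); those points do check out, so your argument goes through.
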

\begin{proof}
By \eqref{pairingVD4-2} it is enough to show isomorphisms
$$
H^1_\dR(V)\simeq H^1_\dR(V'),\quad
H^1_\dR(D)\simeq H^1_\dR(D'),\quad
H^2_\dR(V)_0\simeq H^2_\dR(V')_0.
$$
The first one is an easy exercise.
Let $X'$ be a smooth compactification of $V'$ such that $X'\setminus D' \simeq X\setminus D$ and
consider the commutative diagram with exact rows 
$$\xymatrix{
H^2_\dR(X')\ar[d]_{g_*}\ar[r]^{a^2\quad} &H^2_\dR(X'\setminus D')\ar[r] \ar@{=}[d] 
&H^\dR_1(D')\ar[d]_{g_*}\ar[r]&
H^3_\dR(X') \ar[d]_{g_*}^\simeq\ar[r]^{a^3\quad}& H^3_\dR(X'\setminus D')\ar@{=}[d]
\\
H^2_\dR(X)\ar[r]^{b^2\quad} &
H^2_\dR(X\setminus D)\ar[r]&H^\dR_1(D)\ar[r]
&H^3_\dR(X)\ar[r]^{b^3 \quad} &H^3_\dR(X\setminus D). 
}$$
The second isomorphism follows from the fact that 
$\Image(a^n)=\Image(b^n)=W_nH^n_\dR(X\setminus D)$. 
The last isomorphism follows from the commutative diagram
$$\xymatrix{
0\ar[r] &H^2_\dR(V)_0\ar[r] \ar[d]_{g^*}&
H^2_\dR(V\setminus D)_0\ar[r]\ar@{=}[d] &H^\dR_1(D)\ar[d]_{g^*}^\simeq\\
0\ar[r] &H^2_\dR(V')_0\ar[r] &H^2_\dR(V'\setminus D')_0\ar[r] &H^\dR_1(D')
}$$
with exact rows.
\end{proof}

Now, fix a Zariski open set $S\subset C$ such that $U:=f^{-1}(S)\to S$ is smooth.
Put $T=C\setminus S$ and $Z=X\setminus U$. Let  
$$\nabla\colon \cH_e\to\Omega^1_C(\log T)\ot\cH_e$$
be the Deligne canonical extension
of the Gauss--Manin connection $(\cH=R^1f_*\Omega^\bullet_{U/S},\nabla)$. 
Put $F^1\cH_e=j_*F^1\cH\cap \cH_e$ where $j\colon S \hra C$ and $\Gr_F^0\cH_e=\cH_e/F^1\cH_e$.
Let
\begin{equation*}
\ol{\nabla}\colon F^1\cH_e\to \Omega^1_C(\log T)\ot \Gr_F^0\cH_e
\end{equation*}
be the $\sO_C$-linear map induced from $\nabla$. 
In what follows, we assume the following:
\begin{itemize}
\item[(*)]
The map $\ol\nabla$ is generically bijective.
\end{itemize}
Let $C^\circ\subset C$ be a Zariski open set on which 
$\ol\nabla$ is bijective and put $X^\circ:=f^{-1}(C^\circ)$.
Note that $S\not\subset C^\circ$ in general
and $X^\circ\to C^\circ$ is not necessarily smooth. 
Then the commutative diagram
\begin{equation*}
\xymatrix{
&0\ar[d]\\
&\Omega^1_{C}(\log T)\ot F^1\cH_e\ar[d]\\
F^1\cH_e\ar[r]^{\nabla\phantom{AAAAA}}\ar[d]_=&\Omega^1_C(\log T)\ot\cH_e\ar[d]\\
F^1\cH_e\ar[r]^{\ol{\nabla}\phantom{AAAAAA}} &\Omega^1_C(\log T)\ot \Gr_F^0 \cH_e\ar[d]\\
&0}
\end{equation*}
induces an isomorphism
\begin{equation*}
\L^\circ:=\G(C^\circ,\Omega^1_C(\log T)\ot F^1\cH_e)\os{\simeq}{\to}
H^1(C^\circ,F^1\cH_e\to\Omega^1_C(\log T)\ot\cH_e). 
\end{equation*}
Note that $\L^\circ \subset \G(X^\circ, \O^2_X(\log Z))$. 

\begin{lem}
There are natural injections
$$F^1H^2_\dR(X)_0 \hookrightarrow F^1H^2_\dR(U)_0
\hookrightarrow \L^\circ. $$
\end{lem}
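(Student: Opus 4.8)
The plan is to treat the two injections separately. The first, $F^1H^2_\dR(X)_0\hra F^1H^2_\dR(U)_0$, is induced by the restriction $H^2_\dR(X)\to H^2_\dR(U)$: if $\xi\in H^2_\dR(X)_0$ then $\xi|_{X_s}=0$ for every $s\in S\subset C$ and $\xi|_{e(S)}=0$, so $\xi|_U\in H^2_\dR(U)_0$, and since restriction is a morphism of mixed Hodge structures it is strictly compatible with $F^\bullet$, hence carries $F^1H^2_\dR(X)_0$ into $F^1H^2_\dR(U)_0$. The second, $F^1H^2_\dR(U)_0\hra\L^\circ$, comes from the Leray spectral sequence of $f\colon U\to S$ together with the canonical extension; writing $\sE$ for the complex $[\cH_e\os\nb\ra\O^1_C(\log T)\ot\cH_e]$ and $F^1\sE$ for $[F^1\cH_e\os\nb\ra\O^1_C(\log T)\ot\cH_e]$ (as for $\sE$ in \S4), it will factor as $F^1H^2_\dR(U)_0=H^1(C,F^1\sE)\hra H^1(C^\circ,F^1\sE)\os\simeq\lra\L^\circ$.

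For the injectivity of the first map, note that $Z=X\setminus U=f^{-1}(T)$, so by the de Rham localization sequence (cf. \S4.1) and purity, the kernel of $H^2_\dR(X)\to H^2_\dR(U)$ is the $\ol\Q$-span of the classes $[Z_i]$ of the irreducible components of $Z$, i.e. of the components of the fibres over $T$. Suppose $\xi\in H^2_\dR(X)_0$ lies in this span, and write $\xi=\sum_t\xi_t$ with $\xi_t$ in the span $W_t$ of the components of the fibre $X_t$. Since $\xi|_{X_t}=0$ we get $\xi\cdot[Z_i]=\deg(\xi|_{Z_i})=0$ for every fibre component $Z_i$, and as the $W_t$ are mutually orthogonal under the intersection form this gives $\xi_t^2=\xi\cdot\xi_t=0$. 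By Zariski's lemma (\cite[III, (8.2)]{barth}, as in the proof of Lemma \ref{autodual}) the intersection form on $W_t$ is negative semi-definite with radical $\ol\Q\,[X_t]$; hence each $\xi_t$ is a multiple of the fibre class, so $\xi=\mu\,\phi$ for the fibre class $\phi$, and then $0=\xi\cdot e(C)=\mu\,(\phi\cdot e(C))=\mu$ since $e$ is a section. Therefore $\xi=0$.

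For the second map, since $f$ has singular fibres the open curve $S$ is affine, so $H^2_\dR(e(S))=0$; the Leray spectral sequence then yields a short exact sequence $0\to H^1(S,\cH)\to H^2_\dR(U)\to H^0(S,R^2f_*\O^\bullet_{U/S})\to0$ (using $E_2^{2,0}=H^2_\dR(S)=0$), and the natural map $H^2_\dR(U)\to\prod_{s\in S}H^2_\dR(X_s)$ is the composition of the last arrow with an injection, so its kernel $H^2_\dR(U)_0$ equals $H^1(S,\cH)$. The map of complexes relating $\sE$ to $j_*$ of the relative de Rham complex of $\cH$ identifies $H^1(S,\cH)$, as a mixed Hodge structure, with $H^1(C,\sE)$, and by Steenbrink--Zucker \cite{s-z} (cf. \eqref{fil}) its $F^1$ is the subspace $H^1(C,F^1\sE)$. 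Restriction to $C^\circ$ sends this into $H^1(C^\circ,F^1\sE)$, which equals $\L^\circ$ by the isomorphism displayed just above the lemma; the restriction is injective because it sits over $H^1(C,\sE)\to H^1(C^\circ,\sE)$, and this last map is injective by the defining residue condition (eigenvalues in $[0,1)$) of Deligne's canonical extension --- no $\nb$-primitive can appear over $C^\circ$ that is not already defined over $C$ --- exactly as in the proof of Lemma \ref{iota}, where $H^1(\P^1,\sE)\to H^1(T_1,\sE)$ is even an isomorphism. Composing, and checking that the composite is compatible with the first injection, gives the stated chain.

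The main obstacle is this passage from $C$ to $C^\circ$: one must ensure that no class of $F^1H^2_\dR(U)$ is lost on restricting to the open set $C^\circ$, possibly strictly smaller than $C$, on which $\ol\nb$ is bijective, and this is precisely where the $[0,1)$-residue normalization of $\cH_e$ is indispensable, as it forbids spurious $\nb$-primitives over $C^\circ$. By contrast, the identification $H^2_\dR(U)_0\simeq H^1(C,\sE)$ and the determination of its Hodge filtration are routine once $S$ is seen to be affine, and the first injection is a direct variant of the Zariski's-lemma argument already used for Lemma \ref{autodual}.
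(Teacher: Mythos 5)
Your proposal is correct and follows essentially the same route as the paper: Zariski's lemma (together with the section $e$) for the first injection, and for the second the identification $F^1H^2_\dR(U)_0=H^1(C,F^1\cH_e\to\Omega^1_C(\log T)\ot\cH_e)$ via Leray and Steenbrink--Zucker, followed by the injectivity of restriction to $C^\circ$, which the paper likewise reduces to the injectivity of $F^1H^2_\dR(U)_0\to F^1H^2_\dR(U\cap X^\circ)_0$. You simply supply more detail at both steps (the intersection-theoretic argument and the ``no new $\nb$-primitives'' justification), which is consistent with, and fills in, the paper's terse proof.
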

\begin{proof}
The first injectivity follows from Zariski's lemma (cf. \cite[III, (8.2)]{barth}). 
Since
$$H^2_\dR(U)_0\simeq H^1(S,\cH\to\Omega^1_S\ot\cH)
\simeq H^1(C,	\cH_e\to\Omega^1_C(\log T)\ot\cH_e)$$
and 
$$F^1H^1(S,\cH\to\Omega^1_S\ot\cH)
=H^1(C,F^1\cH_e\to\Omega^1_C(\log T)\ot\cH_e)$$ (cf. \cite[\S 5]{s-z}), 
the second injectivity follows from that of 
$F^1H^2_\dR(U)_0\to F^1H_\dR^2(U\cap X^\circ)_0$. 
\end{proof}

Define $\L(X) \subset \L(U) \subset \L^\circ$ to be the images of $F^1H^2_\dR(X)_0$, $F^1H_\dR^2(U)_0$, respectively. 
By the commutative diagram
$$\xymatrix{
F^1H^2_\dR (X)_0 \ar[d]_\simeq\ar[r]& F^1H^2_\dR (U)_0\ar[r]\ar[d]_\simeq \ar[r] &H_1^\dR(Z)\ar[d]\\
\L(X) \ar[r]& \L(U) \ar[r] & H^0(X^\circ,\O^2_X(\log Z)/\O_X^2)
}$$
we have $\L(X)\subset \G(X^\circ, \O^2_X)$. 
For any cohomology class $\o \in F^1H_\dR^2(X)_0$, let $\o^\circ \in \L(X)$ denote the corresponding rational $2$-form.  

\subsection{Main result}

Now, let $D \subset X^\circ$ be a finite union of fibers. 
We give a description of 
$$\r_{X,D}\colon H_1(D,\Q)\cap H^{0,0} \to \Coker\left[H_2(X,\Q) \to (F^1H_\dR^2(X)_0)^*\right],$$
the restriction to $F^1H_\dR^2(X)_0 \subset F^1H_\dR(X)_D$ of the map given in Sect. 7.1. 
Note that this factors through $\r_{X^\circ,D}$. 
We regard an element $\y \in \L^\circ$ as an element of $\cA^2(X^\circ)$. 
For the dimension reasons, we have $\wt{i}^*\y=0$ and $d\y=0$. 
Hence $(0,0,\y)$ as in \eqref{element} defines a cohomology class $\wh\y \in H_\dR^2(X^\circ,D)$. Note that $\wh\y$ does not necessarily belong to $F^1$. 
For any $\o\in F^1H_\dR^2(X)_0$, write $\wh\o$ instead of $\wh{\o^\circ}$. 

\begin{thm}\label{thm apdx}\ 
\begin{enumerate}
\item 
For any $\o\in F^1H_\dR^2(X)_0$, we have $\wh\o \in F^1H^2_\dR(X^\circ,D)_0$ and it lifts $\o|_{X^\circ}$. 
\item
For any $\g \in H_1(D,\Q) \cap H^{0,0}$,  choose $\G \in H_2(X^\circ,D)$ such that $\pd(\G)=\g$. 
Then we have 
$$\r_{X,D}(\g) = \left[\o \mapsto \int_\G \o^\circ\right].$$
\end{enumerate}
\end{thm}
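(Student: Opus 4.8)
The plan is to obtain the theorem by combining the description \eqref{exp-14} of the coboundary map with the one nontrivial ingredient supplied by the discussion preceding the statement, namely that the rational $2$-form $\o^\circ$ attached to a class $\o\in F^1H^2_\dR(X)_0$ via Deligne's canonical extension is \emph{regular} on $X^\circ$; this regularity is precisely what makes $(0,0,\o^\circ)$ an admissible cocycle of $\cA^\bullet(X^\circ,D)$ lying in the correct step of the Hodge filtration.

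First I would prove (i). Since $\o\in F^1H^2_\dR(X)_0$, its image lies in $\L(X)\subset\G(X^\circ,\O^2_X)$ by the commutative diagram preceding the statement, so $\o^\circ$ is a holomorphic $2$-form on the smooth surface $X^\circ$; in particular it is $C^\infty$ and of Hodge type $(2,0)$, and $\wt i^*\o^\circ=0$, $d\o^\circ=0$ for dimension reasons. Hence $(0,0,\o^\circ)$, of the form \eqref{element}, is a cocycle of $\cA^\bullet(X^\circ,D)$, defining the class $\wh\o$, and since its only nonzero entry is a form of type $(2,0)$ it lies in $F^2\cA^2(X^\circ,D)\subset F^1$. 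Thus $\wh\o\in F^2H^2_\dR(X^\circ,D)\subset F^1H^2_\dR(X^\circ,D)$. The natural map $H^2_\dR(X^\circ,D)\to H^2_\dR(X^\circ)$ of \eqref{exp-9} forgets the $D$-data and keeps the $2$-form, so it sends $\wh\o$ to $[\o^\circ]=\o|_{X^\circ}\in H^2_\dR(X^\circ)_0$; consequently $\wh\o$ lies in $H^2_\dR(X^\circ,D)_0$ (the kernel of $H^2_\dR(X^\circ,D)\to H^2_\dR(X^\circ)/H^2_\dR(X^\circ)_0$) and is a lift of $\o|_{X^\circ}$. This is (i).

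For (ii), I would reduce from $X$ to $X^\circ$: as noted before the statement, $\r_{X,D}$ factors through $\r_{X^\circ,D}$ via the morphism of pairs $(X^\circ,D)\to(X,D)$ and the compatibility of the Carlson isomorphisms, so it suffices to evaluate $\r_{X^\circ,D}(\g)$ at $\o|_{X^\circ}$. Since $H_1(X^\circ,\Q)\cap H^{0,0}=0$, as observed in Sect. 7.1, the image of $\g$ in $H_1(X^\circ,\Q)$ vanishes, so a $\G\in H_2(X^\circ,D)$ with $\pd(\G)=\g$ exists. By \eqref{exp-14}, $\r_{X^\circ,D}(\g)$ is represented by $\o\mapsto\angle{\G,\o_{X^\circ,D}}_{X^\circ,D}$ for any lift $\o_{X^\circ,D}\in F^1H^2_\dR(X^\circ,D)$ of $\o|_{X^\circ}$, independently of the lift, and by (i) we may take $\o_{X^\circ,D}=\wh\o$. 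Evaluating the pairing formula of Sect. 7.1 on the cocycle $(0,0,\o^\circ)$, the $\C_{\wt\Sigma}/\C_\Sigma$- and $\cA^1(\wt D)$-entries being zero, leaves $\angle{\G,\wh\o}_{X^\circ,D}=\int_\G\o^\circ$, which is the asserted formula for $\r_{X,D}(\g)$.

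The one genuinely delicate point is the regularity of $\o^\circ$ invoked in (i): a general element of $\L^\circ$ only lies in $\G(X^\circ,\O^2_X(\log Z))$, and for such a form the associated class need not lie in $F^1$ — this is exactly the remark made before the statement — so the proof really uses $\o\in F^1H^2_\dR(X)_0$ and not merely $\o\in F^1H^2_\dR(X)_D$. Granting this, (i) and (ii) are formal consequences of the construction of $\cA^\bullet(X^\circ,D)$, its Hodge filtration, and the pairing $\angle{\cdot,\cdot}_{X^\circ,D}$.
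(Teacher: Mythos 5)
Your derivation of (ii) from (i) via \eqref{exp-14} and the independence of the choice of $F^1$-lift is exactly the paper's (one-line) argument, and your observation that $\L(X)\subset\G(X^\circ,\O^2_X)$ is what makes $(0,0,\o^\circ)$ a legitimate cocycle is also on target. But the central step of (i) has a genuine gap: you claim that because $\o^\circ$ is a regular $2$-form of type $(2,0)$ on $X^\circ$, the cocycle $(0,0,\o^\circ)$ lies in $F^2\cA^2(X^\circ,D)$ and hence $\wh\o\in F^1H^2_\dR(X^\circ,D)$. This reasoning uses the naive filtration by form type on smooth forms of the \emph{non-compact} surface $X^\circ$, and that filtration does not compute Deligne's Hodge filtration on the mixed Hodge structure of an open variety (or of the pair $(X^\circ,D)$): the correct $F$ is defined through a cohomological mixed Hodge complex built from log complexes on a compactification. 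A closed regular algebraic form on an open variety need not lie in the corresponding $F^p$ — for instance, on an elliptic curve minus its origin, $x\,dx/y$ is a closed regular $(1,0)$-form whose class in $H^1$ is \emph{not} in $F^1$. So regularity of $\o^\circ$ alone does not yield $\wh\o\in F^1$; what is really used is that $\o$ extends to a class in $F^1H^2_\dR(X)_0$ on the \emph{compact} $X$, and this must be propagated to the pair.

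This is precisely where the paper spends almost all of its effort: after reducing (via Lemma \ref{lem-birat}) to the case where $D_\mathrm{red}$, $Z_\mathrm{red}$ are normal crossing and $D=f^{-1}(P)$, it represents $\o$ by a \v{C}ech cocycle $(0)\times(\a_{ij})\times(\b_i)$, builds an honest $F^1$-lift $\o_{X,D}\in F^1H^2_\dR(X,D)$ there (using $\e_i$ with $\a_{ij}|_{\wt D}=\e_j-\e_i$), and compares $\o_{X,D}|_{X^\circ}$ with $\wh\o$ through sections $\nu_i$ of $F^1\cH_e|_{C^\circ}$ coming from the definition of $\o^\circ$; this comparison simultaneously proves the lifting statement $[\o^\circ]=\o|_{X^\circ}$ (which you assert as obvious but which requires exactly this unwinding of the identifications defining $\o^\circ$) and reduces the $F^1$-membership to showing that, near $D$, the $\nu_i$ can be corrected by a constant multiple of $\frac{dt}{t}$ — via the residue exact sequence and Zariski's lemma — so that $\o_{X,D}|_V-\wh\o|_V$ lies in the image of $F^1H^1_\dR(V)\to H^2_\dR(V,D)$. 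Without an argument of this kind, the claim $\wh\o\in F^1$ (and hence the well-definedness step in your (ii)) is unproved; note also that the paper's warning ``$\wh\y$ does not necessarily belong to $F^1$'' is not merely about log poles of general elements of $\L^\circ$, as your last paragraph suggests, but reflects the fact that membership in $F^1$ is a global Hodge-theoretic condition not detectable from the shape of the representing form on the open surface.
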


\begin{proof}
By \eqref{exp-14}, the assertion (ii) follows immediately from (i). 
By Lemma \ref{lem-birat}, we may assume that $D_\mathrm{red}$ 
and $Z_\mathrm{red}$ are divisors with normal crossings.
It suffices to prove the case where $D=f^{-1}(P)$, $P \in C^\circ$. 

For a Zariski sheaf $\mathscr F$, let $(\check{C}^\bullet({\mathscr F}),\delta)$ denote its \v{C}ech complex.
Firstly, $H^2_\dR(X)$ is given by the cohomology in the middle of the complex
$$\check{C}^1(\cO_X)\times
\check{C}^0(\Omega^1_X)
\os{\cD_1}{\to}
\check{C}^2(\cO_X)\times
\check{C}^1(\Omega^1_X)
\times
\check{C}^0(\Omega^2_X)
\os{\cD_2}{\to}
\check{C}^3(\cO_X)\times
\check{C}^2(\Omega^1_X)
\times
\check{C}^1(\Omega^2_X).$$
A description of $H^2_\dR(U)=H^2(X,\Omega^\bullet_X(\log Z))$ is given similarly.
Finally, $H^2_\dR(X,D)$ is given by the complex
\begin{align*}
\check{C}^1(\cO_X)\times
\check{C}^0(\cO_{\wt{D}}\op\Omega^1_X)
&\os{\cD_3}{\to}
\check{C}^2(\cO_X)\times
\check{C}^1(\cO_{\wt{D}}\op\Omega^1_X)
\times
\check{C}^0(\cO_{\wt{\Sigma}}/\cO_\Sigma\op\Omega^1_{\wt{D}}\op\Omega^2_X)\\
&\os{\cD_4}{\to}
\check{C}^3(\cO_X)\times
\check{C}^2(\cO_{\wt{D}}\op\Omega^1_X)
\times
\check{C}^1(\cO_{\wt{\Sigma}}/\cO_\Sigma\op\Omega^1_{\wt{D}}\op\Omega^2_X). 
\end{align*}
Let $\o \in F^1H_\dR^2(X)_0$ and take its representative
$z=(0)\times (\a_{ij}) \times (\b_i) \in \Ker(\sD_2)$. 
Since $\o \in F^1H_\dR^2(X)_D$, there exists $(\epsilon_i)\in \check{C}^0(\Omega^1_{\wt{D}})$ such that
$\alpha_{ij}|_{\wt{D}}=\epsilon_j-\epsilon_i$. 
If we put
$$z_{X,D}=(0)\times (0,\alpha_{ij})\times(0,\epsilon_i,\beta_i), $$
then $z_{X,D} \in \Ker(\cD_4)$. 
By the definition of the Hodge filtration, it represents a class $\o_{X,D} \in F^1H_\dR^2(X,D)$ which lifts $\o$. Let $\o_{X,D}|_{X^\circ}$ be its image in $H_\dR^2(X^\circ,D)$. 

Let $\wh\o \in H_\dR^2(X^\circ,D)$ be the class of the \v{C}ech cocycle
$$\wh z:=(0)\times(0,0)\times(0,0,\o^\circ).$$
The group $H^1(C^\circ, F^1\sH_e \to \O^1_C(\log T)\ot \sH_e)$ is given by the complex
\begin{align*}
\check{C}^0(F^1\sH_e|_{C^\circ}) 
& \os{\sD_5}\to \check{C}^1(F^1\sH_e|_{C^\circ}) \times \check{C}^0(\O^1_C(\log T)\ot \sH_e|_{C^\circ})
\\&\os{\sD_6}\to 
\check{C}^2(F^1\sH_e|_{C^\circ}) \times \check{C}^1(\O^1_C(\log T)\ot \sH_e|_{C^\circ}).
\end{align*}
By the definition of $\o^\circ$, there exists 
$y= (\nu_i) \in \check{C}^0(F^1\cH_e|_{C^\circ})$ such that 
$\cD_5(y)=(\alpha_{ij})\times (\beta_i)-(0)\times (\omega^\circ)$, i.e. 
$\nu_j-\nu_i=\a_{ij}$, $d\nu_i=\b_i-\o^\circ$. 
Hence we have
$$z_{X,D}|_{X^\circ}- \wh z=
(0)\times(0,\nu_j-\nu_i) \times(0,\e_i,d\nu_i).$$
It is clear that this vanishes in $H_\dR^2(X^\circ)$, hence $\wh\o$ lifts $\o|_{X^\circ}$. 

We are left to show that the class of $\wh\o$ lies in $F^1$. 
Let $V$ be a sufficiently small neighborhood of $D$ so that we have an exact sequence 
$$0 \to\Omega^1_V\to
\Omega^1_V(\log D)\os{\Res}{\to} \wt i_*\cO_{\wt{D}}\to 0.$$
Since 
$H_\dR^2(X^\circ,D)/F^1 \to H_\dR^2(V,D)/F^1$ is injective, 
it suffices to show the claim after restricting to $V$. 
Since $\Res(\nu_j)-\Res(\nu_i)=\Res(\a_{ij})=0$, $(\Res(\nu_i))$ defines a class $e \in H^0(\wt{D},\sO_{\wt D})$. 
Consider the composite
$$H^0(\wt{D},\sO_{\wt D}) \os\d\to H^1(V,\O^1_V) \os{\wt i^*}\to H^1(\wt D,\O^1_{\wt D})
 \simeq H^2_\dR(\wt D)$$
where $\d$ is the connecting map. 
Then $(\wt i^*\circ \d)(e)$ is represented by 
$(\a_{ij}|_{\wt D}) \in \check{C}^1(\O_{\wt D})$. 
Therefore, under the above isomorphism, 
$(\wt i^*\circ \d)(e)$ corresponds to $\wt i^*(\o)=0$. 
Let $t\in \cO_{C,P}$ be a uniformizer at $P$. 
By Zariski's lemma (cf. \cite[III, (8.2)]{barth}), $\Ker(\wt i^*\circ \d)$ is one-dimensional and generated by $\Res(\frac{dt}{t})$. Hence there exists a constant $c$ such that 
$$\theta_i:=\nu_i-c \frac{dt}{t}$$
has no pole along $D$. 
By replacing $\nu_i$ with $\theta_i$ and taking $\ve_i=\theta_i|_{\wt D}$, 
we see that 
$\o_{X,D}|_V-\wh\o|_V$ is in the image of $F^1H_\dR^1(V) \to H_\dR^2(V,D)$. 
Hence we obtain $\wh\o \in F^1$ and the proof is complete.
\end{proof}



\begin{thebibliography}{}

\bibitem{anderson} G. W. Anderson, Logarithmic derivatives of Dirichlet $L$-functions and the periods of Abelian varieties, Compositio Math. {\bf 45}, Fasc. 3 (1982), 315-332. 

\bibitem{archinard} N. Archinard, Hypergeometric abelian varieties, Canad. J. Math. {\bf 55} (2003), 897-932. 

\bibitem{asakura} M. Asakura, A formula for Beilinson's regulator map on $K_1$ of a fibration of curves having a totally degenerate semistable fiber, preprint, arXiv:1310.2810. 

\bibitem{a-o} M. Asakura and N. Otsubo, CM periods, CM regulators and hypergeometric functions, II, preprint, arXiv:1503.08894. 

\bibitem{asakura-sato} M. Asakura and K. Sato, Chern classes and Riemann-Roch theorem for cohomology without homotopy invariance, preprint, arXiv:1301.5829. 

\bibitem{barth}
W. Barth, K. Hulek, C. Peters and A. Van de Ven, {\it Compact complex surfaces} (2nd ed.), Springer-Verlag, Berlin, 2004. 

\bibitem{beilinson} A. A. Beilinson, Higher regulators and values of $L$-functions, J. Soviet Math. {\bf 30} (1985), 2036-2070.

\bibitem{chowla-selberg} S. Chowla and A Selberg, On Epstein's zeta-function, J. reine angew. Math. {\bf 227} (1967), 86-110. 

\bibitem{deligne-book} P. Deligne, {\it Equations diff\'erentielles \`a points singuliers r\'eguliers}, 
Lect. Notes Math. {\bf 163}, Springer, 1970. 

\bibitem{hodge II} P. Deligne, 
Th\'eorie de Hodge II, Publ. Math. IHES {\bf 40} (1971), 5-57.  

\bibitem{hodge III} P. Deligne, 
Th\'eorie de Hodge III, Publ. Math. IHES {\bf 44} (1974), 5-77.  

\bibitem{bateman}A. Erd\'elyi et al. ed., {\it Higher transcendental functions}, Vol. 1, California Inst. Tech, 1981. 
\bibitem{fresan} J. Fres\'an, Periods of Hodge structures and special values of the gamma function, preprint, arXiv:1403:4105. 

\bibitem{gross} B. H. Gross (with an appendix by D. E. Rohrlich), On the periods of Abelian integrals and a formula of Chowla-Selberg, Invent. Math. {\bf 45} (1978), 193-211. 

\bibitem{hartshorne} R. Hartshorne, On the de Rham cohomology of algebraic varieties, Publ. Math. IHES {\bf 45} (1975), 5-99. 

\bibitem{lerch} M. Lerch, Sur quelques formules relatives au nombre des classes, Bull. Sci. Math. {\bf 21} (1897), prem. partie, 290-304.

\bibitem{maillot-roessler} V. Maillot and D. Roessler, On the periods of motives with complex multiplication and a conjecture of Gross--Deligne, Ann. Math. {\bf 160} (2004), 727-754. 

\bibitem{morrison} D. R. Morrison, 
The Clemens--Schmid exact sequence and applications, {\it Topics in Transcendental Algebraic Geometry} (P. Griffiths, ed.), Ann. Math. Studies {\bf 106}, Princeton Univ. Press, Princeton, 1984, 101-119. 

\bibitem{otsubo-1} N. Otsubo, On the regulator of Fermat motives and generalized hypergeometric functions, J. reine angew. Math. {\bf 660} (2011), 27-82. 

\bibitem{otsubo-2} N. Otsubo, Certain values of Hecke $L$-functions and generalized hypergeometric functions, J. Number
Theory {\bf 131} (2011), 648-660.

\bibitem{otsubo-3} N. Otsubo, On special values of Jacobi-sum Hecke $L$-functions, Exper. Math. {\bf 24} (2015), no. 2, 247-259. 

\bibitem{takeshi} T. Saito, Vanishing cycles and geometry of curves over a discrete valuation ring, 
Amer. J. Math. {\bf 109}, No. 6 (1987), 1043-1085. 

\bibitem{saito-terasoma} T. Saito and T. Terasoma, Determinant of period integrals, J. Amer. Math. Soc. {\bf 10} (1997), 865-937. 

\bibitem{shimura} G. Shimura, Automorphic forms and periods of abelian varieties, J. Math. Soc. Japan {\bf 31} (1979), 561-592. 

\bibitem{slater} L. J. Slater, {\it Generalized hypergeometric functions}, Cambridge Univ. Press, Cambridge 1966. 

\bibitem{steenbrink} J. Steenbrink, Limits Hodge structures. Invent. Math. {\bf 31} (1976), 229-257. 

\bibitem{s-z} J. Steenbrink and S. Zucker, Variation of mixed Hodge structure. I, Invent. Math. {\bf 80} (1985), 489-542. 

\end{thebibliography}
\end{document}